\documentclass[12pt]{amsart}
\usepackage{mycommands}
\newcommand{\FPdim}{\text{FPdim}}
\newcommand{\op}{\oplus}
\newcommand{\ot}{\otimes}
\newcommand{\dimhom}{\text{dim Hom}}
\usepackage{fancyhdr}
\pagestyle{fancyplain}
\fancyhf{}

\cfoot{\thepage}
\footskip=0.5in
\headheight=0.5in
\geometry{tmargin=1in, bmargin=1in, lmargin=.9in, rmargin=.9in}
\usepackage{setspace}
\title{Pseudo-unitary non-self-dual fusion categories of rank 4}

\email{hannahlarson@college.harvard.edu}

\begin{document}

\maketitle

\begin{abstract}
A fusion category of rank $4$ has either four self-dual simple objects or exactly two self-dual simple objects. We study fusion categories of rank $4$ with exactly two self-dual simple objects, giving nearly a complete classification of those based rings that admit pseudo-unitary categorification. More precisely, we show that if $\mathcal{C}$ is such a fusion category, then its Grothendieck ring $K(\mathcal{C})$ must be one of seven based rings, six of which have known categorifications. In doing so, we classify all based rings associated with near-group categories of the group $\mathbb{Z}/3\mathbb{Z}$.
\end{abstract}

\section{Introduction}

A \textit{fusion category} is a semi-simple rigid tensor category with finitely many simple objects such that the unit object is simple. A complete classification of fusion categories is considered out of reach (it would include the classification of all finite groups), but classification results have been reached in small cases. One notion of a ``small" fusion category is a category with a small number of simple objects or \textit{rank}. The only fusion category of rank 1 is the category of vector spaces. Fusion categories of rank $2$ and rank $3$ were classified in \cite{ranktwo} and \cite{ostrik} respectively. Modular tensor categories (fusion categories with some additional structure) of rank 4 were classified in \cite{RSW}, and non-self-dual modular tensor categories of rank 5 were classified in \cite{rank5}. In a fusion category of rank $4$, either all four simple objects are self-dual, or exactly two simple objects are self-dual. Here, we study fusion categories of rank $4$ with exactly two self-dual simple objects.

One approach to the problem of classifying fusion categories is to study related objects called based rings. A \textit{based ring}, in the sense of \cite{Lu}, is a ring with a basis $\mathbf{1} = X_0, X_1, X_2, \ldots$ over $\zz$ such that the coefficients $N_{ij}^k$, known as the \textit{structure constants}, which appear in the decompositions $X_iX_j = \sum_{k}N_{ij}^kX_k$ are nonnegative integers, and there exists an involution $i \mapsto i^*$ such that $N_{ij}^0 = \delta_{ij^*}$. The following multiplication table describes a based ring of rank $4$ with basis elements $\mathbf{1}, X, Y$ and $Z$:

\begin{center}
\begin{table}[h]
\doublespacing
\begin{tabular}{c||c|c|c|c}
%(2, 4, 2, 1, 0, 2) (x - 8)^2 * (x^2 - 72*x + 96) True -- passes d-numbers, but not sum of reciprocals
$\cdot$  & $\mathbf{1}$ & $X$ & $Y$ & $Z$  \\ 
\hline
\hline
$\mathbf{1}$ & $\mathbf{1}$ & $X$ & $Y$ & $Z$  \\ 
\hline
$X$ & $X$ & $Y + 2Z$ & $2X + 2Y + Z$ & $\mathbf{1} + 2Y$ \\ 
\hline
$Y$ & $Y$ & $2X + 2Y + Z$ & $1 + 2X + 4Y + 2Z$ & $X + 2Y + 2Z$ \\
\hline
$Z$ & $Z$ & $\mathbf{1} + 2Y$ & $X + 2Y + 2Z$ & $2X + Y$
\end{tabular}
\vspace{4pt}
\caption{A Based Ring of Rank 4}
\label{example}
\end{table}
\end{center}

\vspace{-\baselineskip}

\noindent
The Grothendieck ring of a fusion category is a based ring. For example, the condition that there exists an involution $i \mapsto i^*$ such that $N_{ij}^0 = \delta_{ij^*}$ is motivated by the duality in a fusion category: if $\mathcal{C}$ is a rigid category with duality functor $A \mapsto A^*$, then for any simple object $X$, the simple object $\mathbf{1}$ appears exactly once in the decomposition $X \otimes X^*$.

An isomorphism of a based ring with the Grothendieck ring of a fusion category is called a \textit{categorification} of the based ring. We say a based ring $K$ is \textit{pseudo-unitary categorifiable} if there exists a pseudo-unitary category $\mathcal{C}$ such that $K \simeq K(\mathcal{C})$. There are many restrictions that pseudo-unitary categorifiable based rings are known to satisfy. For example, by Theorem 2.21 of \cite{ostrik}, the \textit{formal codegrees} $f_0 \geq f_1 \geq \ldots f_{n-1}$ of a pseudo-unitary fusion category must satisfy $\sum{1/f_i^2} \leq (1/2)(1 + 1/f_0)$. Here, if the matrices for multiplication by basis elements of the based ring are $\mathbf{1} = X_0, X_1, \ldots, X_{n-1}$, the formal codegrees of the based ring are the eigenvalues of the matrix $M = 1 + X_1X_1^* +  X_2X_2^* + \ldots + X_{n-1}X_{n-1}^*$. For example, the formal codegrees of the based ring in table \ref{example}, are $36 + 20\sqrt{3}, 36 - 20\sqrt{3}, 8, 8$ which do not satisfy $\sum{1/f_i^2} \leq (1/2)(1 + 1/f_0)$. Hence, the based ring in table \ref{example} has no pseudo-unitary categorifications.

As the above example shows, not all based rings have categorifications. In fact, from known examples it seems that the majority of based rings are not categorifiable. In this paper, we show that of the based rings of rank four with two self-dual basis elements (based rings whose involution fixes exactly two elements), at most seven are pseudo-unitary categorifiable. Of these seven, six have known categories associated with them.

The main theorems of this paper show the following:
\begin{thm}
Let $K$ be a based ring of rank four with basis $\mathbf{1}, X, Y, Z$, where $X$ and $Z$ are duals of each other and $Y$ is self-dual. If $K$ is pseudo-unitary categorifiable then $K$ is one of the following:
\begin{enumerate}
\item the Grothendieck ring of the Tambara-Yamagami category associated with $\zz/3\zz$ \cite{T-Y}
\item the Grothendieck ring of the category of representations of the alternating group $A_4$
\item the Grothendieck ring of the Izumi-Xu category \cite{CMS}
\item \label{e=6} the based ring with multiplication given by
\begin{align*}
X^2 &= Z & Y^2 &= 1 + X + 6Y + Z & Z^2 &= X \\
XY &= YX = Y & YZ &= ZY = Y & XZ &= ZX = 1.
\end{align*}
\item the Grothendieck ring of the category of representations of $\zz/4\zz$
\item \label{c=1,2} the based ring with multiplication given by
\vspace{-.2in}
\begin{singlespace}
\begin{align*}
X^2 &= cX + Y + cZ & Y^2 &= \mathbf{1} & Z^2 &= cX + Y + cZ \\
XY &= YX = Z & YZ &= ZY = X & XZ &= ZX = \mathbf{1} + cX + cZ
\end{align*}
\end{singlespace}
\noindent
where $c = 1$ or $2$.
\end{enumerate}
\end{thm}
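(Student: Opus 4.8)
The plan is to exploit the Frobenius reciprocity identities $N_{ij}^k = N_{i^*k}^{\,j} = N_{k j^*}^{\,i}$ among the structure constants --- which in particular force $N_{YY}^X = N_{YY}^Z$, $N_{XZ}^X = N_{XZ}^Z$, and $X\otimes Z = Z\otimes X$ --- to reduce the problem to a handful of nonnegative-integer parameters, and then to organize the analysis by the group $G(K)$ of invertible basis elements. Write $d = \FPdim(X) = \FPdim(Z)$ (these are equal since $X^* = Z$) and $\delta = \FPdim(Y)$; the Frobenius--Perron homomorphism gives $\FPdim(\mathcal{C}) = 1 + 2d^2 + \delta^2$, which for a pseudo-unitary category equals $\dim\mathcal{C}$ and is the largest formal codegree $f_0$. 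Since $X^* = Z \neq X$ and $Y^* = Y$, the order of $G(K)$ is $1$, $2$, $3$, or $4$: an invertible of order $\geq 3$ must be $X$ or $Z$, so then $\{\mathbf{1},X,Z\}$ is a subgroup and $|G(K)| \in \{3,4\}$; an invertible of order $2$ must be self-dual, hence equal to $Y$.

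First I would dispatch the large-group cases. If $|G(K)| = 4$, then $G(K) = \{\mathbf{1},X,Y,Z\}$ is a group of order $4$ containing $X$ of order $>2$, hence $G(K) \cong \mathbb{Z}/4\mathbb{Z}$ with $X^2 = Y$ and $X^3 = Z$; this is the Grothendieck ring of $\mathrm{Rep}(\mathbb{Z}/4\mathbb{Z})$. If $|G(K)| = 3$, then $G(K) = \{\mathbf{1},X,Z\} \cong \mathbb{Z}/3\mathbb{Z}$ with $X^2 = Z$, and since $Y$ is the unique simple of Frobenius--Perron dimension $>1$, invertibility forces $X\otimes Y = Y\otimes X = Y$; Frobenius reciprocity then gives $Y\otimes Y = \mathbf{1} + X + mY + Z$ with $m = N_{YY}^Y \geq 0$, so that $K$ is the near-group fusion ring of $\mathbb{Z}/3\mathbb{Z}$ of multiplicity $m$ and $\delta = (m+\sqrt{m^2+12})/2$. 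A short computation then shows the formal codegrees are $\{\delta^2+3,\ 3+9/\delta^2,\ 3,\ 3\}$ (e.g.\ $\{12,4,3,3\}$ when $m=2$, matching $\mathrm{Rep}(A_4)$), and it remains only to pin down which $m$ admit a pseudo-unitary categorification --- this is the near-group classification promised in the abstract, and the surviving rings are those of the Tambara--Yamagami category of $\mathbb{Z}/3\mathbb{Z}$ ($m=0$), of $\mathrm{Rep}(A_4)$ ($m=2$), of the Izumi--Xu category, and the ring in item~\ref{e=6} ($m=6$).

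Next I would treat $|G(K)| = 2$, where the nontrivial invertible is $Y$, of order $2$, so tensoring by $Y$ permutes $\{X,Z\}$; associativity of $Y\otimes X\otimes Y$ shows this permutation is the identity or the transposition, and that it agrees with the right action. In the fixing case, Frobenius reciprocity makes $X\otimes X$ contain neither $\mathbf{1}$ nor $Y$, so $X\otimes X = aX+bZ$ and $d = a+b \in \mathbb{Z}_{>0}$; but then $X\otimes Z = \mathbf{1} + c'(X+Z) + Y$ gives $d^2 = 2 + 2c'd$ (using $\FPdim(Y)=1$), so $d = c'+\sqrt{c'^2+2}$, and $c'^2+2$ is never a perfect square --- a contradiction. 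In the transposition case, reciprocity and associativity force exactly the fusion rules of item~\ref{c=1,2} with $c = N_{XZ}^X = N_{ZZ}^Z$, where $c \geq 1$ because $c=0$ would make $X$ invertible; the reduction to $c\in\{1,2\}$ needs the arithmetic discussed below. For $|G(K)| = 1$ I would use reciprocity to express every product in terms of six nonnegative integers $p,q,r,s,e,f$ --- with $X\otimes X = pX+sY+rZ$, $X\otimes Z = \mathbf{1}+p(X+Z)+qY$, $X\otimes Y = Y\otimes X = qX+eY+sZ$, and $Y\otimes Y = \mathbf{1}+e(X+Z)+fY$ --- impose associativity to cut this to a short explicit list of candidate fusion rings (using, where convenient, that $\langle\mathbf{1},Y\rangle$ is either all of $K$ or a rank-$2$ fusion subcategory, hence Fibonacci by \cite{ranktwo}), and eliminate each candidate with Ostrik's inequality together with the constraints below, so that $|G(K)| = 1$ contributes nothing.

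The main obstacle is the endgame in the cases $|G(K)| \in \{1,3\}$ and in the transposition subcase of $|G(K)| = 2$: for the near-group rings and for the item~\ref{c=1,2} family, Ostrik's inequality $\sum 1/f_i^2 \leq \tfrac12(1+1/f_0)$ \cite{ostrik} turns out to hold for \emph{every} value of the parameter, so it alone bounds nothing. To get an effective bound one must bring in the deeper arithmetic of fusion categories: that $\FPdim(\mathcal{C})$ and all formal codegrees are totally positive algebraic integers; that each formal codegree divides $\FPdim(\mathcal{C})$ in the ring of algebraic integers, so $\FPdim(\mathcal{C})/f_i \in \overline{\mathbb{Z}}$ --- which already excludes, e.g., $m = 1,4,5,7,8$, since then $\delta^2/3 \notin \overline{\mathbb{Z}}$; the cyclotomic-integer restrictions of \cite{CMS}; and, in the near-group case, structural input from the theory of near-group categories of $\mathbb{Z}/3\mathbb{Z}$ to dispose of the remaining values $m = 9,12,\dots$. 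Assembling these into a proof that leaves exactly the seven rings above --- and checking that the six quoted categories realize the listed rings --- is the heart of the matter, whereas the reciprocity-and-associativity bookkeeping and the $|G(K)|\geq 2$ structural reductions are routine.
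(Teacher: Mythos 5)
Your reorganization by the invertible-element group $G(K)$ is a clean and genuinely different route from the paper's diophantine parameterization (the paper instead parameterizes all rank-$4$ based rings with two self-duals by integers $c,e,k,l,p,q$, reduces to a four-parameter family $R(x,y,g,d)$, and uses Ostrik's inequality together with an integrality constraint on formal codegrees to force $(x,y)\in\{(0,\pm1),(\pm1,0)\}$, landing on exactly two one-parameter families). Your $|G(K)|=4$, $|G(K)|=3$, and $|G(K)|=2$-transposition cases correctly recover $\mathrm{Rep}(\zz/4\zz)$, the near-group rings $K_1(e)$, and the family $K_2(c)$ respectively, and the "fixing-case" elimination via $d^2=2+2c'd$ is a nice self-contained argument. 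So the structural skeleton is sound and buys a more conceptual case split than the paper's.

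The genuine gap is the endgame, which you acknowledge is "the heart of the matter" but leave to an appeal to "deeper arithmetic" and "structural input from the theory of near-group categories." Concretely: the divisibility of formal codegrees does exclude $e\not\equiv 0\pmod 3$ with $e\neq 2$ (this matches the paper's derivation that $3\mid e$ via $\dim\hom(F(I(\mathbf{1})),Y)$), but for $e=3k$ the quantity $\dim(\mathcal{C})/3 = 2+k\delta$ is an algebraic integer for \emph{every} $k$, so divisibility says nothing about $e=9,12,15,\dots$, and Ostrik's inequality also holds for all $m$ (as you note). The paper's actual mechanism for killing these cases is an entirely separate development: it computes the induction/forgetful functors and twists on the Drinfeld center (Propositions~\ref{simpleobjects}--\ref{rootsofunity}), producing identities in which a bounded number of roots of unity must sum to quantities like $-k\sqrt{9k^2+12}-4rp$, and then proves new elementary bounds (Proposition~\ref{rootsprop}, Theorem~\ref{a+bsqrtc}) on how many roots of unity such quadratic irrationals require. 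That machinery is what yields $e\leq 6$ and $c\leq 2$; nothing in your sketch supplies a substitute for it, and there is no known "classification of near-group categories of $\zz/3\zz$" to cite that does the work instead --- indeed that classification is precisely what the paper is proving. The same gap appears in your $K_2(c)$ case ("the arithmetic discussed below" points to the same placeholder), and your $|G(K)|=1$ case is asserted rather than carried out.
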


\begin{rem}
It was shown recently by Zhengwei Liu and Noah Snyder that the based ring in (\ref{e=6}) is categorifiable. It was pointed out by Scott Morrison that the based ring in (\ref{c=1,2}) with $c = 1$ is also categorifiable via (the even part of) the subfactor $\mathcal{S}'$ from \cite{LMP} Theorem 1. A recent paper of Bruillard shows that only (2) and (5) have associated categories with a ribbon structure \cite{B4}.
\end{rem}

The proofs of the main theorems rely on results bounding the minimum number of roots of unity required to write certain quadratic irrationalities as sums of roots of unity. For example, we prove the following in section \ref{rootssection}:

\begin{thm}
Let $a, b, c \in \zz$ with $c$ nonnegative and square free. If there exist $n$ roots of unity $\theta_i$ such that $\sum_{i = 0}^n\theta_i = a + b \sqrt{c}$, then $n \geq |b|\phi(2c)$.
\end{thm}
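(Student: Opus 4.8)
The plan is to extract the coefficient of $\sqrt{c}$ from the relation $\sum_i\theta_i=a+b\sqrt c$ by averaging the Galois action against the quadratic character attached to $\mathbb{Q}(\sqrt c)$, and then to bound each of the resulting Gauss-sum-like quantities uniformly in $i$. First I would dispose of the degenerate cases: if $b=0$ there is nothing to prove, and if $c\le 1$ then $\sqrt c\in\zz$ and we may take $b=0$; so assume $b\neq0$ and $c\geq2$. Let $N$ be the least common multiple of the orders of $\theta_1,\dots,\theta_n$, so that $S:=\sum_i\theta_i$ lies in $\mathbb{Q}(\zeta_N)$, and since $b\neq0$ also $\sqrt c=(S-a)/b\in\mathbb{Q}(\zeta_N)$, whence $\mathbb{Q}(\sqrt c)\subseteq\mathbb{Q}(\zeta_N)$. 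Let $\chi\colon(\zz/N\zz)^\times\to\{\pm1\}$ be the primitive quadratic Dirichlet character cutting out $\mathbb{Q}(\sqrt c)$, of conductor $f$, where $f=c$ if $c\equiv1\pmod 4$ and $f=4c$ otherwise; note $f\mid N$, and $\chi(-1)=1$ because $c>0$.

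The key step is an averaging identity. Identifying $g\in(\zz/N\zz)^\times$ with the automorphism $\zeta_N\mapsto\zeta_N^{\,g}$ of $\mathbb{Q}(\zeta_N)$, one has $g(\sqrt c)=\chi(g)\sqrt c$, hence $g(S)=a+\chi(g)b\sqrt c$. Multiplying by $\chi(g)$ and summing over $g$, the $a$-terms cancel because $\chi$ is nontrivial, and $\chi(g)^2=1$, so
\[
\sum_{g\in(\zz/N\zz)^\times}\chi(g)\,g(S)=\phi(N)\,b\sqrt c .
\]
On the other hand, expanding $S=\sum_i\theta_i$ and setting $T_i:=\sum_{g}\chi(g)\,\theta_i^{\,g}$, the left-hand side equals $\sum_{i=1}^n T_i$. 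So it will suffice to prove $|T_i|\leq\phi(N)\sqrt f/\phi(f)$ for every $i$: then $\phi(N)|b|\sqrt c\leq n\,\phi(N)\sqrt f/\phi(f)$, i.e.\ $n\geq|b|\,\phi(f)\sqrt{c/f}$, and a short computation on $c$ modulo $4$ (using $f\in\{c,4c\}$ together with $\phi(2m)=\phi(m)$, $\phi(4m)=2\phi(m)$, $\phi(8m)=4\phi(m)$ for $m$ odd) shows that $\phi(f)\sqrt{c/f}=\phi(2c)$ in every case.

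To bound $T_i$, let $d_i$ be the order of $\theta_i$. Since $\theta_i^{\,g}$ depends only on $g\bmod d_i$, grouping the sum defining $T_i$ over the fibers of the surjective reduction $(\zz/N\zz)^\times\to(\zz/d_i\zz)^\times$ gives $T_i=\sum_{r\in(\zz/d_i\zz)^\times}c_r\,\theta_i^{\,r}$, where $c_r=\chi(r)\sum_{u\in U}\chi(u)$ and $U=\ker\!\big((\zz/N\zz)^\times\to(\zz/d_i\zz)^\times\big)$ has order $\phi(N)/\phi(d_i)$. By orthogonality, $\sum_{u\in U}\chi(u)$ equals $|U|$ when $\chi$ is trivial on $U$ — equivalently, when $\chi$ is defined modulo $d_i$, i.e.\ when $f\mid d_i$ — and vanishes otherwise (this last point uses that the conductor of $\chi$ is exactly $f$). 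Hence $T_i=0$ unless $f\mid d_i$, in which case
\[
T_i=\frac{\phi(N)}{\phi(d_i)}\sum_{r\in(\zz/d_i\zz)^\times}\chi(r)\,\theta_i^{\,r},
\]
and the remaining sum is, up to a factor of absolute value $1$, the Gauss sum of $\chi$ to the modulus $d_i$. Invoking the standard evaluation of Gauss sums of imprimitive characters — it equals $\mu(d_i/f)\,\chi^{\ast}(d_i/f)$ times the primitive Gauss sum of $\chi$, whose absolute value is $\sqrt f$ — this sum has absolute value at most $\sqrt f$. Since $f\mid d_i$ forces $\phi(f)\mid\phi(d_i)$, we obtain $|T_i|\leq\phi(N)\sqrt f/\phi(f)$, as required.

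The main obstacle is this last uniform estimate. A crude term-by-term bound (for instance $|\theta_i-g(\theta_i)|\leq2$, used before averaging) yields only $n\geq|b|\sqrt c$, which is far weaker than $|b|\phi(2c)$; averaging against $\chi$ is essential, and the crucial — and least transparent — input is that $|T_i|$ stays bounded by $\phi(N)\sqrt f/\phi(f)$ no matter how large the order $d_i$ is, i.e.\ that a generalized Gauss sum never exceeds $\sqrt f$ in size. Granting that, no single root of unity $\theta_i$ can contribute more than its fair share of $\sqrt c$, and the count $n\geq|b|\phi(2c)$ falls out.
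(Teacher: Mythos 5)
Your proof is correct, and it takes a genuinely different and more compact route than the paper's. The paper's argument first symmetrizes over $\operatorname{Gal}(\qq(\zeta_x)/\qq(\sqrt{c}))$ to produce a Galois-invariant multiset of roots of unity, then uses a reduction lemma to replace that multiset (without increasing its size) by one supported on $(2^{\epsilon+1}c)^{\text{th}}$ roots of unity, and finally does a case-by-case analysis of the Galois orbits of primitive $Y^{\text{th}}$ roots, hand-computing each orbit sum via quadratic Gauss sums and observing that the $\sqrt{c}$-coefficient of an orbit is at most $|O|/\phi(2c)$ in absolute value. Your approach collapses all of this into a single character-average: applying $\sum_g \chi(g)\,g(\cdot)$ to the identity kills $a$ and scales $b\sqrt{c}$ by $\phi(N)$, and then the whole content of the orbit analysis is captured by the uniform Gauss-sum bound $|T_i|\leq \phi(N)\sqrt{f}/\phi(f)$ for the contribution of an individual root of unity. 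The verifications are sound: the orthogonality step correctly identifies $T_i=0$ unless $f\mid d_i$, the bound $|\tau(\bar\chi)|\leq\sqrt{f}$ for an imprimitive character induced from conductor $f$ is standard (the sum vanishes unless $d_i/f$ is squarefree and coprime to $f$, and otherwise has absolute value exactly $\sqrt f$), the monotonicity $f\mid d_i\Rightarrow\phi(f)\leq\phi(d_i)$ gives the uniform bound, and the bookkeeping $\phi(f)\sqrt{c/f}=\phi(2c)$ checks out in each residue class of $c$ modulo $4$. What your formulation buys is a much shorter and more conceptual argument that avoids the reduction lemma and the orbit-by-orbit tables; what the paper's version buys is a symmetrization lemma that it reuses (with modifications tracking $\theta_i^2$ as well as $\theta_i$) to prove the subtler Proposition~\ref{rootsprop} needed elsewhere, which your single-character average would not handle without further work.
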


This paper is organized as follows. In section \ref{par}, we parameterize based rings of rank $4$ with two self-dual basis elements. In section \ref{red}, we find that if $K$ is pseudo-unitary categorifiable, then it belongs to one of two one-parameter families. One of these families is associated with the near-group categories of $\zz/3\zz$. We study each of these families separately in sections \ref{fam1sec} and \ref{fam2sec} by assuming there exists a category $\mathcal{C}$ with $K(\mathcal{C})$ in the family, computing information about the Drinfeld center $\mathcal{Z}(\mathcal{C})$, and taking traces of balance isomorphisms. This gives us identities for the twists that can only be satisfied when the free parameter is small. These identities involve sums of twists, which are roots of unity, equaling certain quadratic irrationalities. Thus, we need tight bounds on how many roots of unity it takes to write these quadratic irrationalities. We refer to these bounds in sections \ref{fam1sec} and \ref{fam2sec} but delay their proofs to section ~\ref{rootssection}.

\section{Explicit Parameterization of Based Rings} \label{par}
In this section, we parameterize based rings of rank four with two self-dual basis elements in terms of some diophantine equations and then solve these equations to reduce the number of parameters necessary to describe the based rings. 

Let $c, e, k, l, p, q$ be nonnegative integers subject to the conditions
\vspace{-12pt}
\begin{singlespace}
\begin{align}
kl + lc &= lp + kq \label{foo1} \\
kp + le + kc &= 2lq + k^2 \label{foo2} \\
l^2 + c^2 &= 1 + q^2 + p^2 \label{foo3} \\
l^2 + k^2 + q^2 &= 1 + 2pk + qe. \label{foo4}
\end{align}
\end{singlespace}
\noindent
Let $K(c, e, k, l, p, q)$ be the based ring with the basis $\mathbf{1}, X, Y, Z$ and multiplication given by
\vspace{-12pt}
\begin{singlespace}
\begin{align*}
X^2 &= pX + lY + cZ & XY &= YX = qX + kY + lZ \\
Y^2 &= 1 + kX + eY + kZ & YZ &= ZY = lX + kY + qZ \\
Z^2 &= cX + lY + pZ & XZ &= ZX = 1 + pX + qY + pZ.
\end{align*}
\end{singlespace}
\noindent
The following classifies based rings of rank four with exactly two self-dual basis elements.
\begin{prop}
Let $K$ be a based ring of rank $4$ with exactly two self-dual basis elements. Then $K = K(c, e, k, l, p, q)$ for some nonnegative integers $c, e, k, l, p, q$ satisfying \eqref{foo1}--\eqref{foo4}.
\end{prop}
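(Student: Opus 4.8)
The plan is to start from an arbitrary based ring $K$ of rank $4$ with exactly two self-dual basis elements, name the basis so that $\mathbf{1} = X_0$ is the unit, $Y$ is the self-dual non-unit object, and $X, Z$ is the dual pair with $X^* = Z$. The based-ring axioms then force the structure of the multiplication table and reduce everything to the duality symmetry plus associativity. First I would write out a general multiplication table with unknown nonnegative integer coefficients $N_{ij}^k$, and use the three standard constraints: (i) $N_{ij}^0 = \delta_{ij^*}$, which pins down the coefficient of $\mathbf{1}$ in each product; (ii) the symmetry $N_{ij}^k = N_{ji}^k$ from commutativity of a based ring (or, if not assumed, the Frobenius reciprocity relations $N_{ij}^k = N_{i^*k}^j = N_{kj^*}^i$, which also relate coefficients across the table via the duality $X\leftrightarrow Z$); and (iii) the relation $N_{ij}^k = N_{j^*i^*}^{k^*}$ coming from $(X_iX_j)^* = X_j^*X_i^*$. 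Applying the duality $X\leftrightarrow Z$, $Y\mapsto Y$, $\mathbf{1}\mapsto\mathbf{1}$ to these identities collapses many of the a priori distinct coefficients; for instance it forces the coefficient of $X$ in $X^2$ to equal the coefficient of $Z$ in $Z^2$, the coefficient of $Y$ in $X^2$ to equal the coefficient of $Y$ in $Z^2$, and so on. After this bookkeeping I expect to land exactly on the six free parameters $c, e, k, l, p, q$ and the stated shape of the table for $K(c,e,k,l,p,q)$, with all remaining coefficients expressed through these six (and $1$ in the appropriate slots dictated by duality).

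Next I would impose associativity. Rather than check all triples, I would check the identities coming from $(XX)Y = X(XY)$, $(XY)Y = X(YY)$, $(XZ)X = X(ZX)$, $(YY)Y=Y(YY)$ and $(XX)Z = X(XZ)$ and read off, coefficient by coefficient, the polynomial relations among $c,e,k,l,p,q$. Comparing coefficients of $\mathbf{1}, X, Y, Z$ in each such identity produces a system of quadratic Diophantine equations; the expectation is that after eliminating redundancies (many equations repeat because of the $X\leftrightarrow Z$ symmetry) exactly four independent ones survive, and these are precisely \eqref{foo1}--\eqref{foo4}. I would verify that \eqref{foo1} comes from matching the coefficient of one basis element in a mixed associativity relation involving $Y$, that \eqref{foo3} and \eqref{foo4} come from matching the coefficient of $\mathbf{1}$ in $(XZ)(XZ)$-type or $Y^2$-type relations (these are the "quadratic norm" equations), and \eqref{foo2} from the remaining $Y$-coefficient. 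Conversely — though the proposition as stated only asserts the forward direction — it is worth noting that any $(c,e,k,l,p,q)$ satisfying \eqref{foo1}--\eqref{foo4} does give an associative ring with the based-ring axioms, so the parameterization is exact.

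The main obstacle I anticipate is not any single step but the sheer combinatorial bulk of verifying that the associativity constraints reduce to exactly these four equations and no more: one must be careful that the symmetry reductions have been applied consistently so that apparently different equations are genuinely identical, and that no associativity identity has been overlooked that would impose a fifth constraint. A secondary subtlety is justifying that the labeling is forced — i.e. that in rank $4$ with a non-unit self-dual object there is no ambiguity (up to relabeling $X$ and $Z$) in which element plays which role, which follows since the involution $i\mapsto i^*$ fixes $\mathbf{1}$ and exactly one other basis element, so the remaining two must be swapped. Once the table and the equations are matched up, the proposition follows immediately.
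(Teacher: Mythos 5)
Your overall strategy—write down the structure constants, use duality to reduce to a small set of parameters, and then impose associativity to extract the diophantine constraints—is the same route the paper takes (the paper phrases the associativity step compactly as the requirement that the left-multiplication matrices $M_X, M_Y, M_Z$ commute pairwise, but this is the same content as your list of associativity identities). However, there is one genuine gap.

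You write that the symmetry $N_{ij}^k = N_{ji}^k$ comes ``from commutativity of a based ring,'' offering Frobenius reciprocity as a fallback. Neither is adequate. Based rings need not be commutative: the Grothendieck ring of $\mathrm{Vec}_G$ for nonabelian $G$ is a based ring and is noncommutative, and Frobenius reciprocity holds there too, so Frobenius reciprocity alone cannot give you $N_{ij}^k = N_{ji}^k$. Without commutativity you cannot justify the shape of the multiplication table you are aiming for (all the $XY = YX$, $YZ = ZY$, $XZ = ZX$ equalities are baked into the definition of $K(c,e,k,l,p,q)$), and the parameter count will not come out right. The paper closes this with a short but essential argument: the complexification $K \otimes_{\mathbb{Z}} \mathbb{C}$ is a semisimple algebra of dimension $4$, hence isomorphic either to $M_2(\mathbb{C})$ or to $\mathbb{C}^{\oplus 4}$; the Frobenius--Perron dimension furnishes a ring homomorphism $K \to \mathbb{C}$, i.e.\ a one-dimensional representation, which rules out $M_2(\mathbb{C})$; therefore $K$ is commutative. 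You should insert this argument before beginning the bookkeeping with structure constants. Once commutativity is in hand, the rest of your plan goes through as the paper's does.
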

\begin{proof}
We first observe that any based ring of rank $4$ is commutative. The complexification of $K$ is isomorphic to a direct sum of matrix algebras over the complex numbers. Because we are in rank $4$, the complexification is isomorphic to either two-by-two matrices over $\cc$ or $\cc \oplus \cc \oplus \cc \oplus \cc$. Since we have a non-trivial homomorphism into the complex numbers, namely sending each object to its dimension, we must be in the latter case, so $K$ is commutative. 

Working in the basis $1, X, Y, Z$, the general matrices for left multiplication in the commutative ring where $1$ and $Y$ are the only self-dual basis elements are
\vspace{-12pt}
\begin{singlespace}
\[M_X = \left(\begin{array}{cccc}
0 & 0 & 0 & 1 \\
1 & p & q & p \\
0 & l & k & q \\
0 & c & l & p
\end{array}\right), \quad M_Y = \left(\begin{array}{cccc}
0 & 0 & 1 & 0 \\
0 & q & k & l \\
1 & k & e & k \\
0 & l & k & q
\end{array}\right), \quad M_Z = \left(\begin{array}{cccc}
0 & 1 & 0 & 0 \\
0 & p & l & c \\
0 & q & k & l \\
1 & p & q & p
\end{array}\right).\]
\end{singlespace}

\noindent
The repeated identical columns come from commutativity, the zeros and ones of the first column are determined by $\mathbf{1}$ being a unit, and the zeros and ones of the first row are determined by duality. We know $M_Y$ must be symmetric because $Y$ is self-dual. Also, $M_X = M_Z^\dagger$ because $X$ is the dual of $Y$. Commutativity of $K$ implies the above matrices commute, which gives us the restrictions \eqref{foo1}--\eqref{foo4}.
\end{proof}

\noindent
We now reduce the number of parameters needed to describe these based rings from six to four by reformulating the restrictions \eqref{foo1}--\eqref{foo4}.
\begin{defi} 
Let $x, y, g, d$ be integers satisfying
\vspace{-12pt}
\begin{singlespace}
\begin{align}
yg + xd + y \equiv 0 \mod 2, \label{Rdivisibility}\\
dxy = g(2x^2 - y^2) + x^2 + 1, \label{theRequation}
\end{align}
\end{singlespace}
\noindent
\noindent
and $\frac{yg + xd + y}{2}, 2xg - yd + 2x, gy, gx, \frac{yg + xd - y}{2}, xg + x \geq 0$.
We define
\vspace{-12pt}
\begin{singlespace}
\[R(x, y, g, d) = K\left(\frac{yg + xd + y}{2},  2xg - yd + 2x, gy, gx, \frac{yg + xd - y}{2}, xg + x\right).\]
\end{singlespace}
\noindent
\end{defi}
\noindent
The reader can check that the condition $dxy = g(2x^2 - y^2) + x^2 + 1$ implies \eqref{foo1}--\eqref{foo4}. Note that if integers $x, y, g, d$ satisfy these restrictions, then $-x, -y, -g, -d$ satisfy them as well. Also note that $xg \geq 0$ and $yg \geq 0$ implies either both $x, y \leq 0$ or both $x, y \geq 0$.
\begin{prop} \label {allK=R}
Any based ring $K(c, e, k, l, p, q)$ is of the form $R(x, y, g, d)$ for integers $x, y, g, d$,
with $(x, y) = 1$.
\end{prop}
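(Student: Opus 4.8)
The plan is to invert the substitution defining $R$. Given $K(c,e,k,l,p,q)$ with $c,e,k,l,p,q\ge 0$ satisfying \eqref{foo1}--\eqref{foo4}, I will produce integers $x,y,g,d$ with $\gcd(x,y)=1$ whose images under the six formulas in the definition of $R(x,y,g,d)$ are exactly $c,e,k,l,p,q$. Since those formulas involve only integers, the nonnegativity of $c,\dots,q$ immediately yields the nonnegativity conditions required of $R$, and the parity condition \eqref{Rdivisibility} is automatic because $yg+xd+y$ will equal $2c$. So the real content is (i) constructing $x,y,g,d$, and (ii) verifying the single equation \eqref{theRequation}.

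First I would rule out the degenerate case $k=l=0$: here \eqref{foo4} gives $q(q-e)=1$, forcing $q=1$ and $e=0$, after which \eqref{foo3} reads $c^2-p^2=2$, impossible since $c-p\equiv c+p\pmod 2$. So $(k,l)\ne(0,0)$, and I may write $k=gy$, $l=gx$ where $|g|=\gcd(k,l)\ne 0$ and $\gcd(x,y)=1$; this pair is unique up to negating $x,y,g$ simultaneously, which leaves $k=gy$, $l=gx$ unchanged. Now \eqref{foo1} rearranges to $l(c-p)=k(q-l)$, hence $x(c-p)=y(q-l)$, and coprimality of $x,y$ furnishes an integer $t$ with $q-l=xt$ and $c-p=yt$. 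Plugging $c^2-p^2=1+q^2-l^2$ from \eqref{foo3} into this gives $t\bigl(y(c+p)-x(q+l)\bigr)=1$, so $t=\pm1$; negating $x,y,g$ if needed (this flips the sign of $t$) I arrange $t=1$, so that $q=x(g+1)$ and $p=c-y$.

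To find $d$, I substitute $k=gy$, $l=gx$, $p=c-y$, $q=x(g+1)$ into \eqref{foo2} and cancel $g$; this yields $2yc+xe=(g+1)(2x^2+y^2)$, which is precisely the statement $x(2q-e)=y\bigl(2c-(g+1)y\bigr)$. Since $2c-(g+1)y$ and $2q-e$ are integers and $\gcd(x,y)=1$, there is a unique integer $d$ with $xd=2c-(g+1)y$ and $yd=2q-e$ (when $x$ or $y$ vanishes the other is $\pm1$ and $d$ is read off directly; otherwise put $d=(2q-e)/y$ and observe $xd$ is then also an integer, so $d\in\zz$ by coprimality). One then checks that $(x,y,g,d)$ reproduces all six parameters: $k=gy$ and $l=gx$ by construction; $q=x(g+1)$; from $xd=2c-yg-y$ one gets $c=\tfrac{yg+xd+y}{2}$ and $p=\tfrac{yg+xd-y}{2}=c-y$; and $2xg-yd+2x=2l+2x-(2q-e)=e$ using $l+x=q$. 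Finally \eqref{theRequation} drops out of \eqref{foo3}: substituting $l=gx$, $q=x(g+1)$, $p=c-y$ into $l^2+c^2=1+q^2+p^2$ and simplifying gives $2cy=1+(2g+1)x^2+y^2$, and multiplying $xd=2c-(g+1)y$ by $y$ and substituting gives $dxy=2cy-(g+1)y^2=g(2x^2-y^2)+x^2+1$. Hence $K(c,e,k,l,p,q)=R(x,y,g,d)$ with $\gcd(x,y)=1$.

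I expect no conceptual obstacle — the work is bookkeeping — and the points needing care are: fixing the signs of $x,y,g$ so that $t=1$; the small case split when $x$ or $y$ is zero in the integrality argument for $d$; and tracking which relations are used where (\eqref{foo1} for the linear identity, \eqref{foo2} for integrality of $d$, \eqref{foo3} both for $t=\pm1$ and for \eqref{theRequation}, and \eqref{foo4} only to eliminate $k=l=0$). As a sanity check, \eqref{theRequation} forces $\gcd(x,y)^2\mid 1$, so the coprimality clause is in fact automatic once any representation $K=R(x,y,g,d)$ is found, and since \eqref{theRequation} is already known to imply \eqref{foo1}--\eqref{foo4}, the relation \eqref{foo4} we did not use directly is automatically consistent.
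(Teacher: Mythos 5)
Your proof is correct and follows essentially the same strategy as the paper: factor out $g=\gcd(k,l)$, extract divisibility information from \eqref{foo1} and \eqref{foo2} to define the remaining parameters, and collapse \eqref{foo3} into \eqref{theRequation}. One genuine improvement in your organization is how you establish that the sign parameter is $\pm1$: the paper first defines everything in terms of $x,y,g,b,d$ and only at the end combines \eqref{foo3} and \eqref{foo4} into \eqref{withb} to conclude $b=\pm1$ by a divisibility argument, whereas you get $t=\pm 1$ immediately and more transparently by factoring $c^2-p^2 = 1 + q^2 - l^2$ from \eqref{foo3} as $t\bigl(y(c+p)-x(q+l)\bigr)=1$ once \eqref{foo1} is in hand, which lets you normalize $(x,y,g,d)$ up front and avoid the two-case split at the end. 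You are also more careful than the paper about the degenerate case $k=l=0$ (where $g=0$ and the decomposition $k=gy$, $l=gx$ with $\gcd(x,y)=1$ is not determined), and your closing observation that \eqref{foo4} is then only needed to exclude that case, the rest being forced by \eqref{theRequation}, is correct and clarifying.
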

\begin{proof}
Given $K(c, e, k, l, p, q)$, there exist $g, x$, and $y$ such that $l = gx$ and $k = gy$ with $(x, y) = 1$. Rewriting \eqref{foo1} in terms of $g, x$, and $y$ we find $x(gy+c-p) = yq$ which implies $x \mid q$ and $y \mid c-p$. Let $q = ax$ and $c-p = by$. Then \eqref{foo1} gives $g + b = a$.
Similarly, \eqref{foo2} implies $x \mid p+c-gy$ and $y \mid 2q-e$. Let 
$p+c-gy = dx$ and $2q-e = fy$. Then \eqref{foo2} gives $d = f$. We write $c, e, k, l, p, q$ as
\vspace{-12pt}
\begin{singlespace}
\begin{align} 
c &= (dx + gy + by)/2 & e &= 2(g + b)x - dy & k &= gy \label{xstocs}\\
l &= gx & p &= (dx + gy - by)/2 & q &= (g + b)x \label{xstocs'}
\end{align}
\end{singlespace}
\noindent
Note that the equations for $c$ and $p$ imply the divisibility constraint \eqref{Rdivisibility}. Finally, \eqref{foo3} and \eqref{foo4} give us
$(gx)^2 - 1 = (g+b)^2x^2 - by(dx + gy)$, or equivalently
\begin{equation}
dbxy = gb(2x^2 - y^2) + b^2x^2 + 1. \label{withb}
\end{equation}
As every term of \eqref{withb} except $1$ has a factor of $b$, we have $b = \pm 1$. 
If $b = 1$, \eqref{withb} becomes \eqref{theRequation}, and 
$K(c, e, k, l, p, q) = R(x, y, g, d)$.
If $b = -1$, then \eqref{withb} becomes \eqref{theRequation} for $R(-x, -y, -g, -d)$, and
$K(c, e, k, l, p, q) = R(-x, -y, -g, -d)$.
\end{proof}

The following propositions give some additional information about the parameters $x, y$ and $g$. The first gives a useful restriction on the parity of $x$ and $y$.
\begin{prop} \label{xymod}
Let $x, y, g, d$ be integers satisfying \eqref{Rdivisibility} and \eqref{theRequation}. Then $x + y \equiv 1\mod 2$.
\end{prop}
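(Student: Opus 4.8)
The plan is to reduce both defining congruences of $R(x,y,g,d)$ modulo $2$ and play them against each other. Using $n^2 \equiv n \pmod 2$ for every integer $n$, I would first rewrite equation \eqref{theRequation} mod $2$: since $2x^2 \equiv 0$, $y^2 \equiv y$, and $x^2 \equiv x$, it becomes
\[ dxy \equiv gy + x + 1 \pmod 2. \]
Equation \eqref{Rdivisibility} is already a mod-$2$ statement, $gy + xd + y \equiv 0 \pmod 2$. These two relations are the only inputs needed; in particular the argument does not require $(x,y)=1$.

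Next I would argue by contradiction, assuming $x + y \equiv 0 \pmod 2$, i.e.\ $x$ and $y$ are both even or both odd. If both are even, the left side of the reduced form of \eqref{theRequation} is $dxy \equiv 0$ while the right side is $gy + x + 1 \equiv 1$, which is absurd. So the work reduces to excluding the case $x \equiv y \equiv 1 \pmod 2$.

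Finally, assume $x$ and $y$ are both odd. Then the reduced form of \eqref{theRequation} becomes $d \equiv g + 1 + 1 \equiv g \pmod 2$, whereas \eqref{Rdivisibility} becomes $g + d + 1 \equiv 0 \pmod 2$, that is, $d \equiv g + 1 \pmod 2$. These are incompatible, so this case cannot occur either, and therefore $x + y \equiv 1 \pmod 2$. There is no genuine obstacle beyond careful bookkeeping; the one point worth flagging is that the ``both odd'' case genuinely needs both hypotheses \eqref{Rdivisibility} and \eqref{theRequation} simultaneously—neither congruence alone produces a contradiction there—so the parity (divisibility) constraint built into the definition of $R$ is essential to the conclusion.
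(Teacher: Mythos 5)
Your proof is correct and uses the same essential idea as the paper: reduce both \eqref{Rdivisibility} and \eqref{theRequation} modulo $2$ (using $n^2\equiv n$) and do casework on the parities of $x$ and $y$. Your organization as a direct contradiction on the parity of $x+y$ is a bit tidier than the paper's chain of implications, but the content is the same.
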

\begin{proof}
We use case work to get the following implications. First,
\vspace{-12pt}
\begin{singlespace}
\[dx \ \text{and} \ y(g+1)\ \text{both odd} \ \ \Rightarrow \ \ d, x, y \ \text{odd and} \ g \  \text{even} \ \ \Rightarrow \ \ \text{LHS of \eqref{theRequation} is odd, RHS is even.}\]
\end{singlespace}
\noindent
Hence, $2 \mid dx$ and $2 \mid y(g-1)$. Next, 
\vspace{-12pt}
\begin{singlespace}
\[x \ \text{is odd} \quad \Rightarrow \quad d \ \text{is even} \quad \Rightarrow \quad 2 \mid gy^2 \quad \Rightarrow \quad 
\begin{cases}
\text{either} \ gx \ \text{is even} \quad &\Rightarrow \quad y \ \text{is even} \\
\text{or} \ g \ \text{is odd} \quad &\Rightarrow \quad y \ \text{is even}
\end{cases}
\]
\end{singlespace}
\noindent
And,
$x \ \text{is even} \ \Rightarrow \ gy^2 \ \text{is odd} \ \Rightarrow \ y \ \text{is odd}$. Therefore, $x + y \equiv 1\mod 2$.
\end{proof}

\begin{prop} \label{gnot0}
If $R(x, y, g, d) = K(c, e, k, l, p, q)$ for $c, e, k, l, p, q \in \zz_{\geq 0}$, then $g \neq 0$.
\end{prop}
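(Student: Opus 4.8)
The plan is to argue by contradiction. Suppose $R(x,y,g,d) = K(c,e,k,l,p,q)$ with $c,e,k,l,p,q$ all nonnegative integers and $g = 0$. Since $k = gy$ and $l = gx$, the assumption $g = 0$ forces $k = l = 0$, so the based ring degenerates to one determined by the parameters $c,e,p,q$ alone. These still satisfy the commutativity relations \eqref{foo1}--\eqref{foo4} (every $K(c,e,k,l,p,q)$ does, by construction), and with $k = l = 0$ the relations \eqref{foo1} and \eqref{foo2} become trivially $0 = 0$, leaving only \eqref{foo3} and \eqref{foo4}.

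Next I would extract the numerical obstruction. With $k = l = 0$, equation \eqref{foo4} reduces to $q^2 = 1 + qe$, that is, $q(q-e) = 1$; since $q$ and $q-e$ are integers of product $1$ and $q \ge 0$, this forces $q = 1$ and $e = 0$. Substituting $q = 1$ and $l = 0$ into \eqref{foo3} gives $c^2 = p^2 + 2$, i.e. $c^2 - p^2 = 2$. This is the decisive step: each integer square is $0$ or $1$ modulo $4$, so a difference of two squares is congruent to $0$, $1$, or $3$ modulo $4$ and can never equal $2$. This contradiction shows $g \neq 0$.

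I do not expect a real obstacle here; the only points requiring care are that \eqref{foo1}--\eqref{foo4} are genuinely available for an arbitrary $K(c,e,k,l,p,q)$, and that it is the nonnegativity of $q$ (not merely $q \in \zz$) that pins down $q = 1$ in $q(q-e) = 1$. One could instead feed $g = 0$ directly into \eqref{theRequation}, which becomes $dxy = x^2+1$, forcing $x = \pm 1$ and $dy = \pm 2$, and then observe that the parity condition \eqref{Rdivisibility} fails in every case; but this route carries more sign bookkeeping than the mod‑$4$ argument above, so I would present the latter.
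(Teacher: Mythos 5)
Your proof is correct and matches the paper's proof essentially line for line: assume $g=0$, deduce $k=l=0$, extract $q=1$, $e=0$ from \eqref{foo4}, then hit the impossible Diophantine equation $c^2 = p^2 + 2$ from \eqref{foo3}. The only difference is that you spell out the mod-$4$ (equivalently, factor $(c-p)(c+p)=2$) reason that $c^2-p^2=2$ has no integer solutions, a step the paper leaves as "which is a contradiction."
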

\begin{proof}
Assume $g = 0$. Then $k = gy = 0$ and $l = gx = 0$. The restriction \eqref{foo4} gives $e = 0$, and $q = 1$.
Then \eqref{foo3} gives $c^2 = 2 + p^2$, which is a contradiction. Hence $g \neq 0$.
\end{proof}

\section{First Reductions} \label{red}
In this section we show that if a based ring $K(c, e, k, l, p, q)$ admits pseudo-unitary categorification, it belongs to one of two one-parameter families.
Recall the matrices $M_X$, $M_Y$, and $M_Z$ for left-multiplication by basis elements:
\vspace{-12pt}
\begin{singlespace}
\[M_X = \left(\begin{array}{cccc}
0 & 0 & 0 & 1 \\
1 & p & q & p \\
0 & l & k & q \\
0 & c & l & p
\end{array}\right), \quad M_Y = \left(\begin{array}{cccc}
0 & 0 & 1 & 0 \\
0 & q & k & l \\
1 & k & e & k \\
0 & l & k & q
\end{array}\right), \quad M_Z = \left(\begin{array}{cccc}
0 & 1 & 0 & 0 \\
0 & p & l & c \\
0 & q & k & l \\
1 & p & q & p
\end{array}\right)\]
\end{singlespace}
\noindent
If $A = 1 + M_Y^2 + 2M_XM_Z$, then the \textit{formal codegrees} $f_1 \geq f_2, f_3, f_4$ are the roots of the characteristic polynomial of A. By Theorem 2.21 of \cite{ostrik}, if $K$ is pseudo-unitary categorifiable, the formal codegrees satisfy $f_1, f_2, f_3, f_4 > 0$ and
\vspace{-12pt}
\begin{singlespace}
\begin{align}
\frac{1}{f_1^2} + \frac{1}{f_2^2} + \frac{1}{f_3^2} + \frac{1}{f_4^2} &\leq \frac{1}{2} \left(1 + \frac{1}{f_1}\right). \label{foobar}
\end{align}
In addition, by Proposition 2.10 of \cite{ostrik}, we have
\begin{align}
\frac{1}{f_1} + \frac{1}{f_2} + \frac{1}{f_3} + \frac{1}{f_4} &= 1. \label{sumofreciprocals}
\end{align}
\end{singlespace}
\noindent
We will see that the characteristic polynomial of $A$ always factors over $\qq$ as $(t - \gamma)^2 \cdot (t^2 - \alpha t + \beta)$ for integers $\gamma, \alpha, \beta$. The following lemmas greatly restrict the possibilities for $\gamma$, which we will translate into restrictions on the parameters $x$ and $y$.

\begin{lm} \label{gammalm}
Any polynomial of the form $(t - \gamma)^2 \cdot (t^2 - \alpha t + \beta)$, for integers $\gamma, \alpha$, and $\beta$ with roots 
$f_1 \geq f_2, f_3, f_4$ that satisfies \eqref{foobar}, \eqref{sumofreciprocals}, and $\beta \geq \gamma^2$,
must have $2 < \gamma < 8$.
\end{lm}
\begin{proof}
Expand $(t - \gamma)^2 \cdot (t^2 - \alpha t + \beta) = t^4 + (-\alpha - 2\gamma)t^3 + (\beta + 2\gamma\alpha + \gamma^2)t^2 + (-\gamma^2\alpha - 2\gamma\beta)t + \gamma^2\beta$.
Consequently,
\vspace{-12pt}
\begin{singlespace}
\[\frac{\gamma^2\alpha + 2\gamma\beta}{\gamma^2\beta} = \frac{1}{f_1} + \frac{1}{f_2} + \frac{1}{f_3} + \frac{1}{f_4} = 1 \quad \Rightarrow \quad \frac{-\alpha}{\beta} = \frac{2}{\gamma} - 1.\]
\end{singlespace}
\noindent
\vspace{-12pt}
\begin{singlespace}
\begin{align*}
\Rightarrow \frac{1}{f_1^2} + \frac{1}{f_2^2} + \frac{1}{f_3^2} + \frac{1}{f_4^2} &= \left(\frac{1}{f_1} + \frac{1}{f_2} + \frac{1}{f_3} + \frac{1}{f_4}\right)^2 - 2 \cdot \sum_{i \neq j} \frac{f_i f_j}{f_1 f_2 f_3 f_4} \\
&= 1^2 - 2 \cdot \frac{\beta + 2\gamma\alpha + \gamma^2}{\gamma^2 \beta} \\
&= 1 - \frac{2}{\gamma^2} + \frac{4}{\gamma} \cdot \frac{-\alpha}{\beta} - \frac{2}{\beta} \\
&= 1 - \frac{2}{\gamma^2} + \frac{4}{\gamma} \cdot \left(\frac{2}{\gamma} - 1\right) - \frac{2}{\beta} \\
&= 1 + \frac{6}{\gamma^2} - \frac{4}{\gamma} - \frac{2}{\beta}.
\end{align*}
\end{singlespace}
\noindent
Hence, $1 + 6/\gamma^2 - 4/\gamma - 2/\beta \leq (1/2) \cdot (1 + 1/f_1).$ Since $\beta \geq \gamma^2$, we have $f_1f_2f_3f_4 = \gamma^2\beta \geq \gamma^4$, so either there exists $i$ such that $f_i > \gamma$, or $f_i = \gamma$ for all $i$. In the latter case, $\gamma = 4$ because of condition \eqref{sumofreciprocals}. Otherwise, $f_1 > \gamma$ implies 
$\frac{1}{2} (1 + 1/f_1) < \frac{1}{2} (1 + 1/\gamma)$.
Using this, and $2/\beta \leq 2/\gamma^2$,
\vspace{-12pt}
\begin{singlespace}
\[1 + \frac{6}{\gamma^2} - \frac{4}{\gamma} - \frac{2}{\gamma^2} < \frac{1}{2} \left(1 + \frac{1}{\gamma}\right) \quad \Rightarrow\quad \gamma < 8.\]
\end{singlespace}
\noindent
Also, by condition \eqref{sumofreciprocals}, $\gamma > 2$.
\end{proof}
\begin{lm} \label{splittinglm}
If $f_1, f_2, f_3, f_4$ are nonnegative integers with $f_1 \geq f_2 \geq f_3 \geq f_4$ and $f_i = f_j$ for some $i \neq j$ satisfying \eqref{sumofreciprocals}
then $(f_1, f_2, f_3, f_4)$ is one of the following
\vspace{-12pt}
\begin{singlespace}
\begin{align*}
(12, 12&, 3, 2) & (8, 8&, 4, 2) & (10, 5&, 5, 2) & (6, 6&, 6, 2)\\
(6, 6&, 3, 3) & (6, 4&, 4, 3) & (12, 4&, 3, 3) & (4, 4&, 4, 4) 
\end{align*}
\end{singlespace}
\end{lm}
\begin{proof}
We begin by considering the case when $f_4 = 2$. Clearly, $f_3 \neq 2$. If $f_3 = 3$ then we must have $f_2 = f_1 = 12$; if $f_3 = 4$, then $f_2 \neq 4$ so we must have $f_2 = f_1 = 8$; if $f_3 = 5$, either $f_2 = 5 \Rightarrow f_1 = 10$, or $f_2 = f_1 \Rightarrow 2/f_2 = 1 - 1/2 - 1/5 = 3/10 \Rightarrow f_2 = 20/3$, a contradiction; if $f_3 = 6$, we must have $f_2 = f_3$ or $f_2 = f_1$, both of which imply $(f_1, f_2, f_3, f_4) = (6, 6, 6, 2)$. If $f_3 > 6$ then \eqref{sumofreciprocals} is not satisfied.

Now consider $f_4 = 3$. If $f_3 = 3$, then $1/f_2 + 1/f_1 = 1/3$. We see $f_2 = 4 \Rightarrow f_1 = 12$, $f_2 = 5 \Rightarrow f_1 = 2/15$, a contradiction, $f_2 = 6 \Rightarrow f_1 = 6$, and $f_2 > 6 \Rightarrow f_2 > f_1$ another contradiction. If $f_3 = 4$, then $f_2 = 4 \Rightarrow f_1 = 6$, and $f_2 = f_1 \Rightarrow 2/f_2 = 5/12 \Rightarrow f_2 = 5/6$, a contradiction. For $f_3 > 4$, \eqref{sumofreciprocals} fails.

Finally, if $f_4 = 4$, we must have $f_4 = f_3 = f_2 = f_1$. For $f_4 > 4$, there must exist some $f_i < 4$ for \eqref{sumofreciprocals} to be satisfied, which contradicts $f_1 \geq f_2 \geq f_3 \geq f_4$.
\end{proof}
\begin{thm} \label{2families}
Assume there exists a pseudo-unitary category $\mathcal{C}$ such that $K(\mathcal{C}) = K(c, e, k, l, p, q)$. Then either $K(\mathcal{C}) = K(c, 0, 0, 1, c, 0)$ or $K(\mathcal{C}) = K(1, e, 1, 0, 0, 0)$.
\end{thm}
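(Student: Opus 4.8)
The plan is to study the characteristic polynomial of $A = I + M_Y^2 + 2M_XM_Z$ using the order-two automorphism of $K$ that exchanges $X$ and $Z$. Since $A$ commutes with the matrix permuting the $X$- and $Z$-coordinates, it respects the splitting $\qq^4 = V_+ \oplus V_-$, where $V_+$ is the span of $\mathbf{1}, Y, X+Z$ and $V_-$ is the line spanned by $X-Z$. One checks that $V_+$ is a subring and that $A$ acts on it as multiplication by $w := \mathbf{1} + Y^2 + 2XZ = 4\mathbf{1} + (e+2q)Y + (k+2p)(X+Z)$, while on $V_-$ it acts by a scalar. The crucial point is that the relations \eqref{foo1}--\eqref{foo4} are precisely what is needed for the assignment $Y \mapsto q-l$, $X+Z \mapsto p-c$ to define a $\qq$-algebra character of $V_+$; since $Y(X-Z) = (q-l)(X-Z)$ and $(X+Z)(X-Z) = (p-c)(X-Z)$ in $K$, the scalar by which $A$ acts on $V_-$ equals $\gamma := 4 + (e+2q)(q-l) + (k+2p)(p-c)$, which is also the value of $w$ at that character of $V_+$. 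Hence $\gamma$ is an eigenvalue of $A$ of multiplicity at least two and the characteristic polynomial factors over $\qq$ as $(t-\gamma)^2(t^2-\alpha t+\beta)$, with $\alpha, \beta$ read off from the two remaining characters of $V_+$.

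Next I would rewrite everything in the parameters $x,y,g,d$ of Proposition \ref{allK=R}. Using $q-l = x$, $p-c = -y$ and the defining relation \eqref{theRequation}, a short computation gives $\gamma = 2 + 2x^2 + y^2$ and, setting $S := 8g^2+4g+d^2$, also $\alpha = S(2x^2+y^2) = S(\gamma-2)$ and $\beta = S\gamma$; since $g \neq 0$ (Proposition \ref{gnot0}) one has $S \geq 4$. Because $A = I + M_Y^{\mathsf{T}}M_Y + 2M_Z^{\mathsf{T}}M_Z$ is positive definite, all formal codegrees are positive, and as two of them equal $\gamma$ we get $f_1 \geq \gamma$. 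From $\alpha = S(\gamma-2)$ and $\beta = S\gamma$ one computes $\sum_i 1/f_i^2 = 1 + 6/\gamma^2 - 4/\gamma - 2/(S\gamma)$ (exactly as in the proof of Lemma \ref{gammalm}); feeding this, \eqref{foobar}, and $f_1 \geq \gamma$ together gives $\gamma^2 - 9\gamma + 12 \leq 4\gamma/S \leq \gamma$, hence $\gamma \leq 8$. The value $\gamma = 8$ forces $2x^2+y^2 = 6$, so $(|x|,|y|) = (1,2)$, and $4\gamma/S \geq 4$ forces $S \leq 8$, hence $g = -1$ and then $d = 2$ by \eqref{theRequation}; but then $c = \tfrac{yg+xd+y}{2} = -1 < 0$, which is impossible. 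Therefore $\gamma = 2 + 2x^2 + y^2 \leq 7$, i.e. $2x^2 + y^2 \leq 5$. (In the sub-case where the two non-$\gamma$ codegrees are rational integers one may instead appeal to Lemma \ref{splittinglm}.)

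Finally, combine $2x^2 + y^2 \leq 5$ with $\gcd(x,y) = 1$ (Proposition \ref{allK=R}) and $x + y$ odd (Proposition \ref{xymod}): the only possibilities are $(|x|,|y|) = (1,0)$ and $(|x|,|y|) = (0,1)$. If $y = 0$, then \eqref{theRequation} reduces to $(2g+1)x^2 + 1 = 0$, forcing $g = -1$ and $x = -1$ (the sign dictated by $l = gx \geq 0$); substituting into the definition of $R$ yields $K = K(-d/2,\,0,\,0,\,1,\,-d/2,\,0)$, which is $K(c,0,0,1,c,0)$. If $x = 0$, then \eqref{theRequation} reduces to $gy^2 = 1$, forcing $g = 1$ and $y = 1$; substituting yields $K = K(1,\,-d,\,1,\,0,\,0,\,0)$, which is $K(1,e,1,0,0,0)$. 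This gives the two families in the theorem. I expect the main obstacle to be the first paragraph — the brute-force computation of the characteristic polynomial of a $4\times 4$ matrix depending on six parameters, and within it the verification (from \eqref{foo1}--\eqref{foo4}) that $(q-l,\,p-c)$ extends to a character of the subring $V_+$ — together with the ensuing simplification of $\gamma, \alpha, \beta$ in the $R$-coordinates via \eqref{theRequation}.
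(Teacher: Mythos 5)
Your proposal is correct, and it reorganizes the argument in ways that are worth contrasting with the paper's proof. The paper establishes that $\gamma = 2x^2 + y^2 + 2$ is a double root of $P_A(t)$ by brute force: it evaluates $P_A(\gamma)$ and $P_A'(\gamma)$ symbolically in $\qq[x,y,g,d]$ and observes that each is a multiple of the defining relation \eqref{theRequation}. Your explanation via the order-two duality automorphism $X \leftrightarrow Z$ — identifying $V_-$ as a one-dimensional eigenspace on which $A$ acts by the scalar $\gamma$, and observing that the relations \eqref{foo1}--\eqref{foo4} are exactly what make $Y \mapsto q-l$, $X{+}Z \mapsto p-c$ a character of the subring $V_+$ with the same value of $w = \mathbf{1} + Y^2 + 2XZ$ — is more conceptual and explains \emph{why} the coincidence holds (I checked that the $Y^2$, $Y(X{+}Z)$, and $(X{+}Z)^2$ consistency conditions for this character come out to \eqref{foo2}/\eqref{foo4}, \eqref{foo1}, and \eqref{foo3} respectively). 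Having the explicit formulas $\alpha = S(\gamma-2)$, $\beta = S\gamma$ with $S = 8g^2 + 4g + d^2 \geq 4$ (a claim consistent with $\sum 1/f_i = 1$ and which I verified against the paper's $\det A$ computation at $\gamma = 8$ and against a direct computation at $\gamma = 3$) lets you treat the irreducible and split cases uniformly: you do not need to invoke Corollary 2.14 of \cite{ostrik} (to get $\gamma^2 \mid \beta$) in one case and Lemma \ref{splittinglm} in the other, but instead derive $\gamma^2 - 9\gamma + 12 \leq 4\gamma/S \leq \gamma$ directly, yielding $\gamma \leq 8$ in one stroke. Your exclusion of $\gamma = 8$ (via $S \leq 8 \Rightarrow g = -1$, $d = 2$, whence $c = -1 < 0$) is also a genuine alternative to the paper's determinant bound $\det A = 4608g^2 + 1024g + 512 > 512$. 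The final passage from $2x^2 + y^2 \leq 5$ with $\gcd(x,y) = 1$ and $x+y$ odd to the two families is the same as the paper's. Note that, like the paper, your argument ultimately leans on an unverified symbolic identity — here the closed form for $\alpha$ and $\beta$ — so a full writeup would still need to carry out that computation, but the surrounding structure is cleaner and the double-root phenomenon is demystified.
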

\begin{proof}
Let $x, y, g, d$ be integers such that
$K(c, e, k, l, p, q) = R(x, y, g, d)$. Compute the characteristic polynomial $P_A(t)$ of $A = 1 + M_Y^2 + 2M_XM_Z$ in terms of $x, y, g$, and $d$. Let $\gamma = 2x^2 + y^2 + 2$. 
Evaluating $P_A(\gamma)$ and $P_A'(\gamma)$, and factoring over $\qq[x, y, g, d]$, we get something divisible by \eqref{theRequation}. Thus, $\gamma$ is a double root of $P_A(t)$, i.e.\ $P_A(t) = (t - \gamma)^2 \cdot (t^2 - \alpha t + \beta)$ for integers $\alpha$ and $\beta$. 

We claim $2 < \gamma < 8$. If $(t^2 - \alpha t + \beta)$ is irreducible, $\gamma^2 \mid \beta$ by Corollary 2.14 of \cite{ostrik}. In particular, $\gamma^2 \leq \beta$, so Lemma \ref{gammalm} implies $2 < \gamma < 8$. If $(t^2 - \alpha t + \beta)$ splits into two linear factors, then Lemma \ref{splittinglm} implies $\gamma = 3, 4, 5, 6, 8$, or $12$. The cases that remain to be considered are $\gamma = 12$, and $\gamma = 8$. Since $12$ cannot be written as $2x^2 + y^2 + 2$ for integers $x, y$, we can't have $\gamma = 12$.

If $\gamma = 8$, the other two roots are $2$ and $4$, by Lemma \ref{splittinglm}. The product of the four roots $8 \cdot 8 \cdot 4 \cdot 2 = 512$ equals $\det A$. As $\gamma = 8$, we have $(x, y) = (\pm 1, \pm 2)$. In both cases, $d = 1 - g$ by \eqref{theRequation} and $\det A = 4608g^2 + 1024g + 512$.
By Proposition \ref{gnot0}, $g \neq 0$, so $\det A > 512$.
Hence, $\gamma \neq 8$.

Since $\gamma = 2x^2 + y^2 + 2$ for integers $x, y$ satisfying
\vspace{-12pt}
\begin{singlespace}
\begin{align*}
(x, y) &= 1 & &\text{by Proposition \ref{allK=R}}\\
x + y &\equiv 1\mod 2 & &\text{by Lemma \ref{xymod},}
\end{align*}
\end{singlespace}
\noindent
either $(x, y) = (0, \pm 1)$ or $(x, y) = (\pm 1, 0)$. Using \eqref{withb} and $k, l \geq 0$, we can determine $g$ and $b$. Then using \eqref{xstocs} and \eqref{xstocs'}, we determine $c, e, k, l, p, q$. Note that $d$ is a free parameter, which gives the based rings $K(1, e, 1, 0, 0, 0)$ when $(x, y) = (0, \pm 1)$, and $K(c, 0, 0, 1, c, 0)$ when $(x, y) = (\pm 1, 0)$.
\end{proof}

\begin{rem} \label{fc=3}
If $K(1, e, 1, 0, 0, 0) = R(x, y, g, d)$, then $(x, y) = (0, \pm 1)$ and $P_A(t)$ has a repeated root equal to $3$, so two of the formal codegrees of $K(1, e, 1, 0, 0, 0)$ are $f_1 = f_2 = 3$.
\end{rem} 
\begin{rem} \label{fc=4}
If $K(c, 0, 0, 1, c, 0) = R(x, y, g, d)$, then $(x, y) = (\pm 1, 0)$ and $P_A(t)$ has a repeated root equal to $4$, so two of the formal codegrees of $K(c, 0, 0, 1, c, 0)$ are $f_1 = f_2 = 4$.
\end{rem}

For the remainder of this paper, we will write $K_1(e)$ for $K(1, e, 1, 0, 0, 0)$ and $K_2(c)$ for $K(c, 0, 0, 1, c, 0)$. The remainder of the paper shows that $K_1(e)$ can only admit categorification when $e = 0, 2, 3, 6$, and $K_2(c)$ can only admit categorification when $c = 0, 1, 2$.

\section{Categorifications of $K_1(e)$} \label{fam1sec}
Throughout this section we will assume that $\mathcal{C}$ is a fusion category with $K(\mathcal{C}) \simeq K_1(e)$. We study the Drinfeld center $\mathcal{Z}(\mathcal{C})$, which is a modular tensor category, to arrive at a contradiction when $e$ is not equal to $0, 2, 3, 6$. These values for $e$ give rise to the based rings (1) - (4) respectively of Theorem 1.1. 
Recall that the multiplication of basis elements in $K_1(e)$ is given by
\vspace{-12pt}
\begin{singlespace}
\begin{align*}
X^2 &= Z & Y^2 &= 1 + X + eY + Z & Z^2 &= X \\
XY &= YX = Y & YZ &= ZY = Y & XZ &= ZX = 1.
\end{align*}
\end{singlespace}
\noindent

\begin{rem}
$K_1(e)$ is the family of near-group categories associated with the group $\zz/3\zz$.
\end{rem}

\subsection{Induction and Forgetful Functors}
Let $F: \mathcal{Z}(\mathcal{C}) \to \mathcal{C}$ be the forgetful functor, and let the induction functor $I: \mathcal{C} \to \mathcal{Z}(\mathcal{C})$ be its right adjoint. We will study the effect of $I$ and $F$ on $K(\mathcal{C})$ and $K(\mathcal{Z}(\mathcal{C}))$. Propositions \ref{simpleobjects} and \ref{Yprop} describe the simple objects in $\mathcal{Z}(\mathcal{C})$, giving their images under $F$ and their dimensions.

Let $k = \frac{e}{3}$ and $\delta = \frac{e + \sqrt{e^2 + 12}}{2} = \frac{3k + \sqrt{9k^2 + 12}}{2}$.

\begin{prop} \label{dims}
The dimensions of simple objects in $\mathcal{C}$ are $\text{FPdim}(Y) = \delta$, and
\vspace{-12pt}
\begin{singlespace}
\[\text{FPdim}(\mathbf{1}) = \text{FPdim}(X) = \text{FPdim}(Z) = 1.\]
\end{singlespace}
\noindent
In particular, $\dim(\mathcal{C}) = 6 + 3k\delta$.
\end{prop}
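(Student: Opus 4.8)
The plan is to read the dimensions straight off the fusion rules of $K_1(e)$ using the Frobenius--Perron homomorphism, and then to invoke pseudo-unitarity once, at the very end. First I would observe that $X$ and $Z$ are invertible simple objects: since they are dual to one another and the fusion rule gives $X \otimes Z = \mathbf{1}$, each of $X,Z$ has a tensor inverse. (Equivalently, $X^2 = Z$ together with $XZ = \mathbf{1}$ gives $X^3 = \mathbf{1}$, so $\mathbf{1}, X, Z$ span a copy of $\zz[\zz/3\zz]$ inside $K_1(e)$.) Because $\FPdim \colon K(\mathcal{C}) \to \rr$ is a ring homomorphism sending each simple object to a positive real number, applying it to $X^3 = \mathbf{1}$ forces $\FPdim(X)^3 = 1$, hence $\FPdim(X) = 1$, and likewise $\FPdim(Z) = 1$; of course $\FPdim(\mathbf{1}) = 1$.

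Next I would apply $\FPdim$ to the relation $Y^2 = \mathbf{1} + X + eY + Z$. Writing $\delta = \FPdim(Y)$, which is positive, this reads $\delta^2 = 3 + e\delta$, i.e.\ $\delta^2 - e\delta - 3 = 0$. The two roots of this quadratic are $\tfrac{e \pm \sqrt{e^2+12}}{2}$, and only the larger one is positive, so $\delta = \tfrac{e + \sqrt{e^2+12}}{2} = \tfrac{3k + \sqrt{9k^2+12}}{2}$ with $k = e/3$, as claimed.

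Finally, $\FPdim(\mathcal{C}) = \FPdim(\mathbf{1})^2 + \FPdim(X)^2 + \FPdim(Y)^2 + \FPdim(Z)^2 = 3 + \delta^2$, and substituting $\delta^2 = 3 + e\delta$ gives $\FPdim(\mathcal{C}) = 6 + e\delta = 6 + 3k\delta$. Since $\mathcal{C}$ is pseudo-unitary, its categorical (global) dimension $\dim(\mathcal{C})$ coincides with $\FPdim(\mathcal{C})$, so $\dim(\mathcal{C}) = 6 + 3k\delta$. Nothing here is deep: every step is bookkeeping in the based ring, and the one place that uses more than the fusion rules is the identification $\dim(\mathcal{C}) = \FPdim(\mathcal{C})$, which is exactly where the pseudo-unitarity hypothesis on $\mathcal{C}$ (inherited from Theorem~\ref{2families}) enters.
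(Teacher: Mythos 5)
Your proof is correct and reaches the same conclusions as the paper's. The paper computes Frobenius--Perron dimensions by finding the largest eigenvalue of the left-multiplication matrices (the characteristic polynomial of $M_Y$ is $t^2(t^2 - et - 3)$), whereas you instead apply the ring homomorphism $\FPdim$ directly to the fusion relations $X^3 = \mathbf{1}$ and $Y^2 = \mathbf{1} + X + eY + Z$; these are two mechanical routes to the same numbers. A small point in your favor: you make explicit that the final equality $\dim(\mathcal{C}) = \FPdim(\mathcal{C})$ uses the pseudo-unitarity hypothesis, which the paper's proof uses implicitly in writing $\dim(\mathcal{C}) = \sum \FPdim(X_i)^2$.
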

\begin{proof}
Recall the matrix for left multiplication
\vspace{-12pt}
\begin{singlespace}
\[M_Y = \left(\begin{array}{cccc}
0 & 0 & 1 & 0 \\
0 & 0 & 1 & 0 \\
1 & 1 & e & 1 \\
0 & 0 & 1 & 0\
\end{array}\right).\]
\end{singlespace}
\noindent
The characteristic polynomial of $M_Y$ is $t^2 \cdot (t^2 - et - 3)$. Thus, $\FPdim(Y) = \frac{e + \sqrt{e^2 + 4 \cdot 3}}{2} = \delta$.

By the same method $\text{FPdim}(\mathbf{1}) = \text{FPdim}(X) = \text{FPdim}(Z) = 1$. We can now compute $\dim(\mathcal{C}) = \text{FPdim}(\mathbf{1})^2 + \text{FPdim}(X)^2 + \text{FPdim}(Y)^2 + \text{FPdim}(Z)^2 = 1 + 1 + \delta^2 + 1 = 6 + 3k\delta$.
\end{proof}

Note that $\delta$ is irrational unless $e = 2$, in which case $K_1(e)$ is the Grothendieck ring of the category of representations of the alternating group $A_4$, and so is already known to admit categorification. From now on, we will assume $e \neq 2$, and hence $\delta$ is irrational.

\begin{prop} \label{simpleobjects}
There exist non-isomorphic simble objects in $\mathbf{1}, A, B, C, D, E, G, H, J$ in $\mathcal{Z}(\mathcal{C})$ for which 
\vspace{-12pt}
\begin{singlespace}
\begin{align*}
I(\mathbf{1}) &= \mathbf{1} \op A \op B \op C &  I(X) &= B \op D \op E \op G, & I(Z) &= C \op D \op H \op J, \\[2pt]
\intertext{and} \\[-16pt]
F(A) &= \mathbf{1} \op kY & F(D) &= X \op \alpha Y \op Z & F(H) &= rY \op Z \\
F(B) &= \mathbf{1} \op X \op kY & F(E) &= X \op rY & F(J) &= pY \op Z \\
F(C) &= \mathbf{1} \op kY \op Z & F(G) &= X \op pY, \\[3pt]
\intertext{where $r, p$ and $\alpha$ are integers satisfying $\alpha + r + p = 2k$. In particular,} \\[-15pt]
\dim(A) &= 1 + k\delta & \dim(D) &= 2 + \alpha \delta & \dim(H) &= 1 + r\delta \\
\dim(B) &= 2 + k\delta & \dim(E) &= 1 + r\delta & \dim(J) &= 1 + p\delta \\
\dim(C) &= 2 + k\delta & \dim(G) &= 1 + p\delta.
\end{align*}
\end{singlespace}
\end{prop}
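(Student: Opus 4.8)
The plan is to extract everything from the adjoint pair relating $\mathcal{C}$ to its Drinfeld center. Let $F\colon\mathcal{Z}(\mathcal{C})\to\mathcal{C}$ be the forgetful functor and $I$ its right adjoint. Since $\mathcal{C}$ is a spherical fusion category, $I$ is in fact a two-sided adjoint of $F$, so $\operatorname{Hom}(I(W),M)\cong\operatorname{Hom}(W,F(M))$ and $\operatorname{Hom}(M,I(W))\cong\operatorname{Hom}(F(M),W)$ for all $W\in\mathcal{C}$, $M\in\mathcal{Z}(\mathcal{C})$; moreover $F$ preserves duals and dimensions, and $F(I(W))\cong\bigoplus_{U}U\ot W\ot U^{*}$, the sum over the simples of $\mathcal{C}$. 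The first step is to compute this for $W=\mathbf 1,X,Y,Z$ directly from the fusion rules of $K_1(e)$ (using $X^{*}=Z$, $Y^{*}=Y$). One finds $F(I(\mathbf 1))=4\cdot\mathbf 1\op X\op eY\op Z$, $F(I(X))=\mathbf 1\op 4X\op eY\op Z$, $F(I(Z))=\mathbf 1\op X\op eY\op 4Z$, and $F(I(Y))=e\mathbf 1\op eX\op(e^{2}+6)Y\op eZ$.

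Next I would read off the decompositions of $I(\mathbf 1),I(X),I(Z)$ into simples. By reciprocity $\dim\operatorname{End}(I(W))$ is the multiplicity of $W$ in $F(I(W))$, which is $4$ for $W\in\{\mathbf 1,X,Z\}$; since $\operatorname{Hom}(\mathbf 1,I(\mathbf 1))\cong\operatorname{Hom}(\mathbf 1,\mathbf 1)$ is one-dimensional while $\operatorname{Hom}(\mathbf 1,I(X))\cong\operatorname{Hom}(\mathbf 1,X)=0=\operatorname{Hom}(\mathbf 1,I(Z))$ (and the odd coefficient of $\mathbf 1$ in $F(I(X))$, $F(I(Z))$ rules out a multiplicity-$2$ constituent), one gets $I(\mathbf 1)=\mathbf 1\op A\op B\op C$ and $I(X)$, $I(Z)$ multiplicity-free sums of four simples avoiding $\mathbf 1$; any constituent $S$ of $I(W)$ satisfies $W\hookrightarrow F(S)$. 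For overlaps, $\dim\operatorname{Hom}(I(V),I(W))$ equals the multiplicity of $V$ in $F(I(W))$, which is $1$ for each pair of distinct $V,W\in\{\mathbf 1,X,Z\}$; so each pair of $\{I(\mathbf 1),I(X),I(Z)\}$ shares exactly one constituent — call them $B$ (in $I(\mathbf 1)\cap I(X)$), $C$ (in $I(\mathbf 1)\cap I(Z)$), $D$ (in $I(X)\cap I(Z)$). A short comparison with $F(I(\mathbf 1))$, where $X$ and $Z$ each have multiplicity $1$, shows $B,C,D$ are pairwise distinct and $D\notin I(\mathbf 1)$ (e.g.\ if $B=C$ then $B=D$ and $\mathbf 1\op X\op Z\hookrightarrow F(B)$, which over-counts $X$ in $F(\mathbf 1)\op F(A)\op 2F(B)=F(I(\mathbf 1))$). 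This produces the nine pairwise non-isomorphic simples $\mathbf 1,A,B,C$ (constituents of $I(\mathbf 1)$), $E,G$ (the other two constituents of $I(X)$), $H,J$ (the other two of $I(Z)$), and $D$.

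Then I would pin down the forgetful images. Matching multiplicities in $F(I(\mathbf 1))$, where $\mathbf 1$ occurs in $F(A),F(B),F(C)$ and $X,Z$ occur once each, forces $F(A)=\mathbf 1\op k_AY$, $F(B)=\mathbf 1\op X\op k_BY$, $F(C)=\mathbf 1\op Z\op k_CY$ with $k_A+k_B+k_C=e$; feeding $F(B)$ into $F(I(X))$ and $F(C)$ into $F(I(Z))$ forces $F(D)=X\op Z\op\alpha Y$, $F(E)=X\op rY$, $F(G)=X\op pY$ with $\alpha+r+p=e-k_B$, and $F(H)=Z\op r_HY$, $F(J)=Z\op r_JY$ with $\alpha+r_H+r_J=e-k_C$. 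Duality now contributes: $I(W^{*})\cong I(W)^{*}$ and $F$ commutes with duals, so $I(\mathbf 1)^{*}=I(\mathbf 1)$ and $I(X)^{*}=I(Z)$; matching $F$-images identifies $B^{*}=C$ (the only constituents of $I(\mathbf 1)$ whose image contains $\mathbf 1$ with $X$, resp.\ $Z$), hence $k_B=k_C$; likewise $A^{*}=A$, $D^{*}=D$, and $\{E,G\}^{*}=\{H,J\}$, so after relabelling $r_H=r$, $r_J=p$. Taking dimensions via $\dim F(S)=\dim S$ and $\FPdim(Y)=\delta$ (Proposition~\ref{dims}) then gives all the stated dimensions, modulo the identification of the $Y$-multiplicities.

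The remaining point — that $k_A=k_B=k_C$, equivalently $3\mid e$ with each equal to $k=e/3$, which also yields $\alpha+r+p=2k$ — is the one I expect to be the genuine obstacle, since duality only gives $k_B=k_C$. I would obtain $k_A=k_B$ from the modular/pseudo-unitary structure of $\mathcal{Z}(\mathcal{C})$: its global dimension is $(\dim\mathcal{C})^{2}=\FPdim(\mathcal{Z}(\mathcal{C}))$, so every simple has quantum dimension equal to its Frobenius–Perron dimension, a totally positive algebraic integer $\ge 1$ whose Galois conjugates are $\pm$ quantum dimensions. Applying the nontrivial automorphism of $\qq(\delta)$ to $\dim A=1+k_A\delta$ and $\dim B=2+k_B\delta$ — noting the conjugate $\bar\delta=\tfrac{e-\sqrt{e^{2}+12}}{2}<0$ — and imposing $\dim\ge 1$ on every simple of $\mathcal{Z}(\mathcal{C})$ yields lower bounds on any nonzero $k_A,k_B,k_C$ that, together with $k_A+k_B+k_C=e$ and $k_B=k_C$, are incompatible unless all three coincide. (An alternative is to write $\dim\mathcal{Z}(\mathcal{C})=(\dim\mathcal{C})^{2}$ as the sum of squared dimensions over the simples — the nine above together with the simples whose forgetful image is a multiple of $Y$, analysed in Proposition~\ref{Yprop} — and separate the rational and irrational parts, also using $\sum 1/f_i=1$ and the codegrees from Remark~\ref{fc=3}.) I would attempt the Galois/positivity argument first, as it is self-contained; the delicate bookkeeping there is keeping track of which Galois-conjugate pairs with which simple.
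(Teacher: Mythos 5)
Your computations of $F(I(W))$ via $\bigoplus_U U\ot W\ot U^*$, the decomposition counts via $\dimhom(I(V),I(W))=\dimhom(V,FI(W))$, and the overlap analysis all parallel the paper and are fine. The genuine gap is exactly where you flag it: establishing $k_A=k_B=k_C$ (equivalently $3\mid e$ and each multiplicity equals $k=e/3$). Your proposed Galois/positivity argument does not go through as stated. The claim that $\FPdim$ of a simple is ``a totally positive algebraic integer'' is false already in $\mathcal{C}$ itself: $\FPdim(Y)=\delta=\frac{e+\sqrt{e^2+12}}{2}$ has conjugate $\bar\delta=\frac{e-\sqrt{e^2+12}}{2}<0$. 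What one can say about the Galois action in a modular category is that conjugates of normalized $S$-matrix entries are $\pm$ other normalized entries, and extracting a clean inequality on the $k_A,k_B,k_C$ from this would require identifying the image of each simple under the Galois permutation, which you do not have. Your fallback of separating rational/irrational parts of $\dim\mathcal{Z}(\mathcal{C})=(\dim\mathcal{C})^2$ needs the full list of simples of $\mathcal{Z}(\mathcal{C})$, in particular the $L_i$ and their multiplicities from Proposition~\ref{Yprop}, which is itself proved using Proposition~\ref{simpleobjects}; that route is circular in this paper's logical order.

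The paper closes the gap with a much more direct tool: Theorem~2.13 of \cite{ostrik} gives a canonical bijection between the simple summands of $I(\mathbf{1})$ and the one-dimensional representations of the commutative ring $K(\mathcal{C})$, with the dimension of the summand corresponding to formal codegree $f_i$ equal to $\dim(\mathcal{C})/f_i$. Since two of the formal codegrees of $K_1(e)$ equal $3$ (Remark~\ref{fc=3}), two of the summands have dimension $\dim(\mathcal{C})/3=2+k\delta$, and the remaining one has dimension $\dim(\mathcal{C})-1-2(2+k\delta)=1+k\delta$. The $F$-images $F(A)=\mathbf{1}\oplus kY$, $F(B)=\mathbf{1}\oplus X\oplus kY$, $F(C)=\mathbf{1}\oplus kY\oplus Z$ are then forced by matching integer and $\delta$-coefficients in $\dim F(S)=\dim S$, so the direction of inference runs from the codegree-determined dimensions to the $F$-images, not the reverse. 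You should invoke Theorem~2.13 at the start (to get both the number of summands of $I(\mathbf{1})$ and their dimensions) rather than trying to recover the dimensions from positivity of $\mathcal{Z}(\mathcal{C})$.
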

\begin{proof}
By Theorem 2.13 of \cite{ostrik}, since $K_1(e)$ has $4$ irreducible representations, the object $I(\mathbf{1}) \in \mathcal{Z}(\mathcal{C})$ decomposes into the sum of $4$ simple objects. Because
\vspace{-12pt}
\begin{singlespace}
\[\dim\hom(\mathbf{1}, I(\mathbf{1})) = \dim\hom(F(\mathbf{1}), \mathbf{1})) = \dim\hom(\mathbf{1}, \mathbf{1}) = 1,\]
\end{singlespace}
\noindent
one of these objects must be $\mathbf{1}$.
So let 
$I(\mathbf{1}) = \mathbf{1} \op A \op B \op C$.
Two of the formal codegrees of the based ring $K_1(e)$ are $f_1 = f_2 = 3$ (see Remark \ref{fc=3}), so by Theorem 2.13 of \cite{ostrik}, we can assume that $\dim(B) = \dim(C) = \frac{\dim(\mathcal{C})}{3} = 2 + k\delta$, whence $\FPdim(A)= \frac{\dim(\mathcal{C})}{3} - 1 = 1 + k\delta$. Consider $F(I(\mathbf{1}))$. We denote by $\mathbb{O}(\mathcal{C})$ the set of isomorphism classes of simple objects of $\mathcal{C}$. By Proposition 5.4 of \cite{ENO} we have
\vspace{-12pt}
\begin{singlespace}
\[F(\mathbf{1}) \op F(A) \op F(B) \op F(C) = F(I(\mathbf{1})) = \bigoplus_{X \in \mathbb{O}(\mathcal{C})} X \otimes \mathbf{1} \otimes X^* = 4 \cdot \mathbf{1} \op X \op eY \op Z.\]
\end{singlespace}
\noindent
Let $F(A) = \mathbf{1} \op lX \op mY \op nZ$. Then,
$1 + k \delta = \dim(A) = 1 + l + m\delta + n$.
Since $\delta \notin \qq$, we have $m = k$ and $l + n = 0$, so $l = n = 0$. Thus, $F(A) = \mathbf{1} \oplus kY$. We repeat the same arguments to determine $F(B)$ and $F(C)$.

\noindent
Next we consider $I(X)$. Using Proposition 5.4 of \cite{ENO}, we calculate
\vspace{-12pt}
\begin{singlespace}
\[F(I(X)) = \bigoplus_{Y \in \mathbb{O}(\mathcal{C})} Y \otimes X \otimes Y^* = \mathbf{1} \op 4X \op 3kY \op Z.\]
\end{singlespace}
\noindent
Since $\text{dim Hom}(I(\mathbf{1}), I(X)) = \text{dim Hom}(F(I(\mathbf{1}), X) = 1$, the decompositions of $I(X)$ and $I(\mathbf{1})$ have exactly one
simple object in common. As
\vspace{-12pt}
\begin{singlespace}
\[\text{dim Hom}(B, I(X)) = \text{dim Hom}(F(B), X) = 1,\]
\end{singlespace}
\noindent
this object is $B$.
Finally, as $\text{dim Hom}(I(X), I(X)) = \text{dim Hom}(F(I(X)), X) = 4$,
we see $I(X)$ is the sum of four simple objects.
Thus, let
$I(X) = B \op D \op E \op G$.
We have
\vspace{-12pt}
\begin{singlespace}
\begin{align*}
\left(\mathbf{1} \op X \op kY\right) \op F(D) \op F(E) \op F(G) &= F(I(X)) = \mathbf{1} \op 4X \op 3kY \op Z \\
\Rightarrow F(D) \op F(E) \op F(G) &= 3X \op 2kY \op Z.
\end{align*}
\end{singlespace}
\noindent
Since $\text{dim Hom}(D, I(X)) = \text{dim Hom}(F(D), X) = 1$, let $F(D) = X \op \alpha Y \op \beta Z$. By a similar argument, $F(E) = X \op rY \op sZ$ and $F(G) = X \op pY \op qZ$, where $\alpha + r + p = 2k$ and $\beta + s + q = 1$. Without loss of generality, we can assume $\beta = 1$, so $s = q = 0$.

Finally,
since dim Hom$(I(X), I(Z)) = \text{dim Hom}(F(I(X)), Z) = 1$, the decompositions of $I(Z)$ and $I(X)$ have exactly one simple object in common. By duality, there are objects $H = E^*$ and $J = G^*$ for which
$I(Z) = C \op D \op H \op J$,
and $F(H) = rY \op Z$ and $F(J) = pY \op Z$.
\end{proof}

Note that, under our assumption that $e \neq 2$, we have shown $3 \mid e$, i.e.\ $k$ is an integer.

\begin{prop} \label{Yprop}
There exist simple objects $L_i \in \mathcal{Z}(\mathcal{C})$ and positive integers $\gamma_i$ such that
\vspace{-12pt}
\begin{singlespace}
\[I(Y) = kA + kB + kC + kD + rE +pG + rH + pJ + \sum{\gamma_i L_i}\]
\end{singlespace}
\noindent
where the objects $L_i$ satisfy $F(L_i) = \gamma_iY$ and $\sum{\gamma_i^2 = 6 + 5k^2 -2(r^2 + p^2)}$.
\end{prop}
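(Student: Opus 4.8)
The plan is to read the decomposition of $I(Y)$ off Frobenius reciprocity for the adjoint pair $(F,I)$, exactly in the spirit of the proof of Proposition~\ref{simpleobjects}, and then nail down $\sum\gamma_i^2$ by computing one $\hom$-space in two ways. Since $I$ is adjoint to the forgetful functor $F$, for any simple object $L$ of $\mathcal{Z}(\mathcal{C})$ the multiplicity of $L$ in $I(Y)$ equals $\dim\hom_{\mathcal{C}}(F(L),Y)$, i.e.\ the multiplicity of $Y$ inside $F(L)$. Applying this to the objects $A,B,C,D,E,G,H,J$ with the forgetful images recorded in Proposition~\ref{simpleobjects} gives multiplicities $k,k,k,\alpha,r,p,r,p$ respectively, while $F(\mathbf{1})=\mathbf{1}$ contains no copy of $Y$, so $\mathbf{1}$ does not occur (the identification $\alpha=k$ is discussed below). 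If $L$ is any further simple summand of $I(Y)$ then $\langle F(L),Y\rangle>0$; but if $F(L)$ contained $\mathbf{1}$, $X$, or $Z$, the same reciprocity would place $L$ inside $I(\mathbf{1})$, $I(X)$, or $I(Z)$, whose simple constituents Proposition~\ref{simpleobjects} already exhausts as $\mathbf{1},A,B,C,D,E,G,H,J$. Hence $F(L)=\gamma_L Y$ for an integer $\gamma_L\ge 1$, and the multiplicity of $L$ in $I(Y)$ is exactly $\gamma_L=\langle F(L),Y\rangle$; labelling these objects $L_i$ gives the stated form of $I(Y)$, with $F(L_i)=\gamma_i Y$ and positivity of the $\gamma_i$ automatic.

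To compute $\sum\gamma_i^2$, evaluate $d:=\dim\hom_{\mathcal{Z}(\mathcal{C})}(I(Y),I(Y))$ twice. From the decomposition just obtained, $d=3k^2+\alpha^2+2r^2+2p^2+\sum\gamma_i^2$. On the other hand, by the adjunction and Proposition~5.4 of \cite{ENO}, $d=\langle FI(Y),Y\rangle$, where $FI(Y)=\bigoplus_{U\in\mathbb{O}(\mathcal{C})}U\otimes Y\otimes U^*$; a short computation with the fusion rules of $K_1(e)$ — using $\mathbf{1}\otimes Y\otimes\mathbf{1}=X\otimes Y\otimes Z=Z\otimes Y\otimes X=Y$ and $Y\otimes Y\otimes Y=Y\cdot Y^2$ — yields $FI(Y)=e\mathbf{1}\oplus eX\oplus(6+e^2)Y\oplus eZ$, so $d=6+e^2=6+9k^2$. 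Equating the two expressions gives $\sum\gamma_i^2=6+6k^2-\alpha^2-2(r^2+p^2)$, which is precisely the asserted $6+5k^2-2(r^2+p^2)$ exactly when $\alpha=k$. (Equivalently one may run this through dimensions, $\dim I(Y)=\dim(\mathcal{C})\dim(Y)$ together with $\delta^2=e\delta+3$: the rational part recovers $\alpha+r+p=2k$ and the $\delta$-coefficient the displayed sum.)

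The one step that is not pure bookkeeping — and the one I expect to be the main obstacle — is the equality $\alpha=k$, equivalently $r+p=k$ given the relation $\alpha+r+p=2k$ of Proposition~\ref{simpleobjects}. Every obvious numerical consistency check (the dimension identity above, the projection-formula identities $I(W)\otimes Z\cong I(W\otimes F(Z))$ for various $W$ and $Z$, the counts $\dim\hom(I(W),I(W'))$, and $\dim\mathcal{Z}(\mathcal{C})=\dim(\mathcal{C})^2$) turns out to hold for every admissible value of $\alpha$, so this equality must use more than adjunction and dimension counting. The route I would take is to note first that $D$ is self-dual — it is the unique simple object of $\mathcal{Z}(\mathcal{C})$ whose forgetful image contains both $X$ and $Z$, and $F(D^*)=F(D)$, so $D^*=D$ — and that $D$ occurs in $I(X)$ with multiplicity one; combining this with the structure theory for induction functors (Theorem~2.13 of \cite{ostrik}), which already forces the summands $B,C$ of $I(\mathbf{1})$ to have dimension $\dim(\mathcal{C})/3$, should likewise force $\dim(D)=\dim(\mathcal{C})/3=2+k\delta$, i.e.\ $\alpha=k$. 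Carrying out this last inference rigorously is the crux of the proof.
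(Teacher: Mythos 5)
Your bookkeeping matches the paper's exactly: Frobenius reciprocity for $(F,I)$ to read off the multiplicities of $A,\ldots,J$ in $I(Y)$, and the count
\[
\dim\hom(I(Y),I(Y)) \;=\; \dim\hom(F(I(Y)),Y) \;=\; 6+9k^2
\]
equated against the sum of squares of multiplicities, giving $\sum\gamma_i^2 = 6+6k^2-\alpha^2-2(r^2+p^2)$. You are also right that the only nontrivial step in upgrading this to the stated formula is $\alpha=k$. In fact you have put your finger on an exposition quirk in the paper itself: the paper's proof of this proposition never justifies writing $kD$ rather than $\alpha D$; the equality $\alpha=k$ is only established afterward, in Corollary~\ref{r+p=krem}. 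That corollary, however, depends only on Propositions~\ref{simpleobjects} and \ref{rootsofunity} (the twist computation for $I(X)$), not on this proposition, so the logic is not circular — it just needs reordering.

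Where your proposal goes wrong is in the route you sketch for proving $\alpha=k$. The self-duality of $D$ is fine: $D^*$ occurs in $I(X)=I(Z)^*$, and among $B,D,E,G$ only $D$ has forgetful image $X\oplus\alpha Y\oplus Z$, so $D^*=D$. But this tells you nothing about $\alpha$, since $X\oplus\alpha Y\oplus Z$ is self-dual for every $\alpha$. And Theorem~2.13 of \cite{ostrik} is a statement about $I(\mathbf{1})$ and the formal codegrees; it does not apply to $D$, which is not a summand of $I(\mathbf{1})$. There is no structure-theoretic principle that forces $\dim D = \dim(\mathcal{C})/3$. What the paper actually uses is the balance/twist structure on $\mathcal{Z}(\mathcal{C})$: Proposition~\ref{rootsofunity} first shows $\theta_D^3=\theta_E^3=\theta_G^3=1$ from $|\tr(\theta_{I(X)}^3)|=\dim(\mathcal{C})$, then $\tr(\theta_{I(X)})=0$ together with $\delta\notin\qq(\omega)$ splits into the rational identity $2+2\theta_D+\theta_E+\theta_G=0$, forcing $\theta_D=\omega$, $\theta_E=\theta_G=\omega^2$, and the irrational identity $k+\alpha\theta_D+(r+p)\theta_E=0$, whose imaginary part gives $\alpha=r+p$; combined with $\alpha+r+p=2k$ this yields $\alpha=k$. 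This twist argument is the missing ingredient, and it cannot be replaced by adjunction and dimension counting — exactly as you observed when you noted that every such check is consistent with every admissible $\alpha$.
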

\begin{proof}
Consider $F(I(Y))$. We have
\vspace{-12pt}
\begin{singlespace}
\[F(I(Y)) = \bigoplus_{X \in \mathbb{O}(\mathcal{C})} X \otimes Y \otimes X^*
%&= (\mathbf{1} \otimes Y \otimes \mathbf{1}) \op (X \otimes Y \otimes Z) \op (Y \otimes Y \otimes Y)  \op (Z \otimes Y \otimes X) \\
%&= Y \op Y \op (1 \op X \op eY \op Z) \otimes Y \op z \otimes Y \\
%&= 3Y \op Y \op X \otimes Y \op e Y \otimes Y \op Z \otimes Y \\
%&= 4Y \op Y \op e(1 \op X \op eY \op Z) \op Y \\
= 3k \op 3kX \op (6 + 9k^2)Y \op 3kZ.\]
\end{singlespace}
\noindent
Since $I$ and $F$ are adjoint, the multiplicity to which an object $O$ appears in the decomposition of $I(Y)$ equals the multiplicity of $Y$ in $F(O)$. Thus by Proposition \ref{simpleobjects},
\begin{equation}
I(Y) = kA + kB + kC + kD + rE +pG + rH + pJ + \sum{\gamma_i L_i} \label{I(Y)}
\end{equation}
where the objects $L_i$ satisfy $F(L_i) = \gamma_i Y$. Then,
\vspace{-12pt}
\begin{singlespace}
\begin{align*}
k^2 +k^2 + k^2 + k^2 + 2r^2 + 2p^2 + \sum{\gamma_i^2} &= \text{dim Hom}(F(I(Y)), Y) = 9k^2 + 6 \\
\Rightarrow \sum{\gamma_i^2} &= 6 + 5k^2 -2(r^2 + p^2). && \qedhere
\end{align*}
\end{singlespace}
\noindent
\end{proof}

\subsection{Twists}
We now compute the twists $\theta_X: X \to X$ of the simple objects in $\mathcal{Z}(\mathcal{C})$.
\begin{prop} \label{thetaABC}
The twists satisfy $\theta_A = \theta_B = \theta_C = 1$.
\end{prop}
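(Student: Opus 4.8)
The plan is to exploit the fact that $A$, $B$, $C$ appear in the decomposition of $I(\mathbf{1})$, so they sit ``over the unit'' in a very constrained way. Recall from Proposition~\ref{simpleobjects} that $I(\mathbf{1}) = \mathbf{1} \oplus A \oplus B \oplus C$ with $F(A) = \mathbf{1} \oplus kY$, $F(B) = \mathbf{1} \oplus X \oplus kY$, $F(C) = \mathbf{1} \oplus kY \oplus Z$. The twist $\theta$ on $\mathcal{Z}(\mathcal{C})$ acts as a scalar on each simple object (since the $L$'s and $A,B,C$ are simple), and the induction functor $I$ intertwines the canonical structure: the object $I(\mathbf{1})$ carries a distinguished structure coming from the identity, and its twist is compatible with the multiplication $I(\mathbf{1}) \otimes I(\mathbf{1})$. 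The cleanest route is to use that $I(\mathbf{1})$ is a connected étale (commutative separable Frobenius) algebra in $\mathcal{Z}(\mathcal{C})$ whose summands are precisely $\mathbf{1}, A, B, C$; for such an algebra the twist must restrict to the identity on every summand, since $\theta$ is an algebra automorphism fixing the unit component $\mathbf{1}$ and the algebra is connected. Alternatively, and perhaps more in the spirit of the paper's explicit computations, one can argue directly with balancing: the balancing axiom gives $\theta_{I(\mathbf{1})} = (\mathrm{id} \otimes \mathrm{counit})\circ(\ldots)$, forcing $\theta_A = \theta_B = \theta_C = \theta_{\mathbf{1}} = 1$.

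Concretely, here is the argument I would write. The object $I(\mathbf{1})$ is the canonical Lagrangian-type algebra $\bigoplus_{W \in \mathbb{O}(\mathcal{C})} W \boxtimes W^*$ (as an object of $\mathcal{Z}(\mathcal{C}) \simeq \mathcal{C} \boxtimes \mathcal{C}^{\mathrm{rev}} / \ldots$ pairing), and in any case $I(\mathbf{1})$ is a commutative connected separable algebra in $\mathcal{Z}(\mathcal{C})$ with multiplication and unit coming from the adjunction $I \dashv F$. A standard fact (see e.g. the theory of étale algebras, or Davydov--Müger--Nikshych--Ostrik) is that on a connected étale algebra the ribbon twist acts by the identity: indeed $\theta$ is a tensor-natural automorphism, hence an algebra endomorphism of $I(\mathbf{1})$; it fixes the component $\mathbf{1}$; and multiplication by the corresponding idempotent together with connectedness (the unit component appears with multiplicity one) forces $\theta|_{I(\mathbf{1})} = \mathrm{id}$. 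Since $A$, $B$, $C$ are distinct simple summands of $I(\mathbf{1})$ and $\theta$ acts on each by the scalars $\theta_A, \theta_B, \theta_C$, we conclude $\theta_A = \theta_B = \theta_C = 1$.

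If one prefers to avoid invoking étale-algebra machinery, the alternative is the direct balancing computation. The twist on an induced object $I(W)$ is computed by the half-braiding: for $I(\mathbf{1})$ the half-braiding with any $W$ is essentially trivial on the $\mathbf{1}$-strand, and the contribution of each summand $W \otimes W^*$ to $\theta_{I(\mathbf{1})}$ is $\theta_W \theta_{W^*}^{-1} \cdot (\text{ribbon of }\mathcal{C})$; but $\mathcal{C}$ here is merely fusion, not ribbon, and the correct statement is that the half-braiding on $I(\mathbf{1})$ squares to the identity twist componentwise. The honest version: by Proposition 5.4 / Lemma on induced objects in \cite{ENO}, $\theta_{I(\mathbf{1})}$ acts on the $W \boxtimes W^*$-isotypic piece by a scalar that is forced to be $1$ because that piece contains $\mathbf{1}_{\mathcal{C}}$ in its image under $F$ and the twist is compatible with the unit morphism $\mathbf{1}_{\mathcal{Z}(\mathcal{C})} \to I(\mathbf{1})$. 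Tracking this through the decomposition $I(\mathbf{1}) = \mathbf{1} \oplus A \oplus B \oplus C$ gives the claim.

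The main obstacle is pinning down precisely which formulation of ``the twist is trivial on $I(\mathbf{1})$'' is both correct and cleanly citable: the slick statement is that $I(\mathbf{1})$ is connected étale and étale algebras have trivial twist, but one must make sure the separability/connectedness hypotheses are actually available here (connectedness is immediate since $\dimhom(\mathbf{1}, I(\mathbf{1})) = 1$, and separability holds because $\mathcal{C}$ is a fusion category over a field of characteristic zero, so $I(\mathbf{1})$ is even a special symmetric Frobenius algebra). Once that citation is in place the rest is a one-line consequence of $\theta$ being a tensor automorphism fixing the unit. I would also double-check that $A$, $B$, $C$ are genuinely non-isomorphic (already established in Proposition~\ref{simpleobjects}) so that ``$\theta$ acts by a scalar on each'' is the full content and there is no subtlety from multiplicities.
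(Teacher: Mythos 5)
Your proposed route has a genuine gap, and the gap sits exactly where you flag your own uncertainty. The claim that a tensor-natural automorphism of $I(\mathbf{1})$ fixing the unit component must be the identity because the algebra is ``connected'' is not correct: connectedness means $\dimhom(\mathbf{1}, I(\mathbf{1})) = 1$, which says nothing about the automorphism group of the algebra. (A connected commutative algebra can certainly have nontrivial automorphisms.) The honest version of the Frobenius-plus-ribbon argument you gesture at — naturality of $\theta$ with respect to $m$, the ribbon axiom $\theta_{A\otimes A} = c_{A,A}^2(\theta_A \otimes \theta_A)$, commutativity killing the $c^2$, and the nondegenerate pairing $A_i \otimes A_{i^*} \to \mathbf{1}$ from separability — only gives $\theta_i \theta_{i^*} = 1$ together with $\theta_{i^*} = \theta_i$, hence $\theta_i^2 = 1$. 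It does not rule out $\theta_i = -1$. This is precisely why in Kirillov--Ostrik the condition $\theta_A = \mathrm{id}$ is \emph{part of the definition} of a rigid $\mathcal{C}$-algebra, not a consequence; the ``standard fact'' you would like to cite is not a theorem at that level of generality.

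What closes the gap is pseudo-unitarity, and once you invoke it the \'etale-algebra scaffolding becomes unnecessary. The paper's proof is a one-liner: Theorem 2.5 of \cite{ostrik} gives $\dim(\mathcal{C}) = \tr(\theta_{I(\mathbf{1})}) = 1 + \dim(A)\theta_A + \dim(B)\theta_B + \dim(C)\theta_C$, with $\dim(A) = 1 + k\delta$, $\dim(B) = \dim(C) = 2 + k\delta$ all \emph{positive real numbers} (pseudo-unitarity), while each $\theta$ is a root of unity. Since $1 + \dim(A) + \dim(B) + \dim(C) = \dim(\mathcal{C})$ and $\mathrm{Re}(\theta) \leq 1$ with equality iff $\theta = 1$, the equality forces $\theta_A = \theta_B = \theta_C = 1$. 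Your sketch needs to be replaced by (or reduced to) this positivity argument; without it you only reach $\theta_A, \theta_B, \theta_C \in \{\pm 1\}$.
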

\begin{proof}
Recall the dimensions of the simple objects $A, B$, and $C$ (see Proposition \ref{simpleobjects}). By Theorem 2.5 of \cite{ostrik},
\vspace{-12pt}
\begin{singlespace}
\[6 + 3k\delta = \dim(\mathcal{C}) = \tr(\theta_{I(\mathbf{1})})
%&= \tr(\theta_{\mathbf{1}}) + \tr(\theta_A) + \tr(\theta_B) + \tr(\theta_C) \\
%& = 1 + \dim(A) \theta_A + \dim(B) \theta_B + \dim(C) \theta_C \\
= 1 + \left(1 + k\delta\right) \theta_A + \left(2 + k\delta\right) \theta_B + \left(2 + k\delta\right) \theta_C\]
\end{singlespace}
\noindent
so we must have $\theta_A = \theta_B = \theta_C = 1$.
\end{proof}

\noindent
Recall $\delta = \frac{3k + \sqrt{9k^2 + 12}}{2}$.
\begin{lm} \label{clm}
If $c$ is the squarefree part of $9k^2 + 12$, so that $\delta \in \qq(\sqrt{c})$, then $3 \mid c$ and $(c, 10) = 1$.
\end{lm}
\begin{proof}
First, $9k^2 + 12 = 3(3k^2 + 4)$, so $3 \, \, || \, \, 9k^2 + 12$, giving $3 \mid c$. Next, since squares are never $3$ mod $5$, we have $5 \nmid 9k^2 + 12 = (3k)^2 + 12$. Finally, we have $9k^2 + 12 \equiv 5, 12, 13, 16, 21, 28, 29$ mod $32$, so $9k^2 + 12$ is always divisible by an even power of $2$, and hence $2 \nmid c$.
%Finally, note that $9k^2 + 12 = (3k - 2i\sqrt{3})(3k + 2i\sqrt{3})$. Since the Eisenstein integers have unique factorization, and $2$ is prime, $2$ will appear the same number of times in $3k - 2i\sqrt{3}$ as in its Galois conjugate $(3k + 2i\sqrt{3})$. Hence $2$ always divides $k$ and even number of times, so $(c, 2) = 1$.
\end{proof}

\noindent
Let $\omega$ be a root of the polynomial $x^2 + x + 1$.

\begin{prop} \label{rootsofunity}
The twists satisfy $\theta_D = \omega$ and $\theta_E = \theta_G = \theta_H = \theta_J = \omega^2$.
\end{prop}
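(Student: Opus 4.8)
The plan is to rerun the ``trace of the balancing isomorphism'' computation of Proposition~\ref{thetaABC}, this time on $I(X)$ rather than $I(\mathbf 1)$, and then to read off the individual twists from the resulting identity using that $\delta$ is a highly constrained quadratic irrationality (Lemma~\ref{clm}). First I would cut down the unknowns. Since $\mathcal Z(\mathcal C)$ is a ribbon category, $\theta_{W^{*}}=\theta_W$ for every $W$; by Proposition~\ref{simpleobjects} we have $H\cong E^{*}$, $J\cong G^{*}$, and $D\cong D^{*}$ (the latter because $D$ is the unique common simple summand of $I(X)$ and $I(Z)\cong I(X)^{*}$). So it is enough to determine $\theta_D,\theta_E,\theta_G$, after which $\theta_H=\theta_E$ and $\theta_J=\theta_G$. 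All of these are roots of unity (the twists of a modular tensor category always are).

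Next, applying Theorem~2.5 of \cite{ostrik} (the result used for $I(\mathbf 1)$ in Proposition~\ref{thetaABC}) to $I(X)$ and using $X\not\cong\mathbf 1$ gives $\tr(\theta_{I(X)})=0$. Writing $I(X)=B\op D\op E\op G$, substituting $\theta_B=1$ and the dimensions of Proposition~\ref{simpleobjects}, and abbreviating $u=2+2\theta_D+\theta_E+\theta_G$ and $v=k+\alpha\theta_D+r\theta_E+p\theta_G$, the identity becomes
\[
u+\delta v=0 .
\]
The same relation falls out of $\tr(\theta_{I(Z)})=0$ once $\theta_H=\theta_E$, $\theta_J=\theta_G$ are used, so this is our only new constraint. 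Note that $u$ is a sum of six roots of unity, $v$ a sum of $3k$ of them, and $\alpha+r+p=2k$.

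It remains to solve $u+\delta v=0$. Recall $\delta=\tfrac{3k+\sqrt{9k^{2}+12}}{2}$ generates $\qq(\sqrt c)$ with $c$ squarefree, $3\mid c$, $(c,10)=1$. The point is that $\delta$, being a quadratic irrationality of this restricted type, cannot be obtained cheaply from roots of unity --- this is precisely the content of Section~\ref{rootssection}, whose proofs use $3\mid c$ and $(c,10)=1$. Concretely, after bounding the order of the twists and checking that $\sqrt{9k^{2}+12}$ therefore does not lie in the cyclotomic field generated by $\theta_D,\theta_E,\theta_G$, a Galois automorphism fixing those roots of unity and sending $\delta\mapsto\bar\delta=3k-\delta$ turns $u+\delta v=0$ into $u+\bar\delta v=0$, forcing $v=0$ and hence $u=0$. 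One is then left with the vanishing sums of roots of unity $2+2\theta_D+\theta_E+\theta_G=0$ and $k+\alpha\theta_D+r\theta_E+p\theta_G=0$; running the Conway--Jones classification of short vanishing sums of roots of unity against these two relations (and $\alpha+r+p=2k$) eliminates every case except $\theta_D$ a primitive cube root of unity with $\theta_E=\theta_G=\theta_D^{2}$, which moreover forces $\alpha=k$ and $r+p=k$. Setting $\omega:=\theta_D$ then gives $\theta_D=\omega$ and $\theta_E=\theta_G=\theta_H=\theta_J=\omega^{2}$.

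The main obstacle is this last step. The twists enter $u+\delta v=0$ with irrational coefficients, so there is no naive way to separate out a ``rational part'' and a ``$\sqrt c$ part''; everything rests on the arithmetic input of Section~\ref{rootssection} that $\sqrt c$ is expensive to build from roots of unity, and even then the finite check against the Conway--Jones list is delicate --- for instance the possibility $\theta_D=-1$ with $\theta_E=-\theta_G$ must be excluded via the second relation, and the case $\sqrt{9k^{2}+12}\in\qq(\theta_D,\theta_E,\theta_G)$ must be ruled out by the bound on the order of the twists. The same circle of ideas, in heavier form, recurs for $K_{2}(c)$ in Section~\ref{fam2sec}.
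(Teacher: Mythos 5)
Your outline goes wrong at the one place the paper does something that is not ``rerun the $I(\mathbf 1)$ argument.'' The paper's proof has a crucial \emph{first} step you are missing: it applies the result of \cite{NgS} on higher Frobenius--Schur indicators to $I(X)$, obtaining $\tr(\theta_{I(X)}^3)=\zeta\dim(\mathcal C)$ for some root of unity $\zeta$, and then observes that since $\dim I(X)=\dim(B)+\dim(D)+\dim(E)+\dim(G)=\dim(\mathcal C)$ and $\theta_B=1$, the triangle inequality is tight only if $\theta_D^3=\theta_E^3=\theta_G^3=1$. This is what puts the twists in $\qq(\omega)$. After that, the separation of $\tr(\theta_{I(X)})=0$ into the ``rational part'' $u=2+2\theta_D+\theta_E+\theta_G=0$ and the ``$\delta$ part'' $v=k+\alpha\theta_D+r\theta_E+p\theta_G=0$ is immediate, because $\delta\notin\qq(\omega)$ (indeed $\qq(\omega)\cap\mathbb R=\qq$ and $\delta$ is a real irrational) --- no estimate on the order of the twists and no careful Galois gymnastics are required. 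The final step is one line: $\theta_D\neq 1$ since otherwise $u$ has positive real part, so $\theta_D=\omega$, whence $\theta_E+\theta_G=-2-2\omega=2\omega^2$, and the only way two cube roots of unity sum to $2\omega^2$ is $\theta_E=\theta_G=\omega^2$.

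Your proposal acknowledges at two points that you need a bound on the order of the twists, but never supplies one; that bound is exactly what the $\tr(\theta_{I(X)}^3)$ computation delivers, and there is no obvious cheap substitute. Moreover, even granting you the separation $u=v=0$, your claim that Conway--Jones ``eliminates every case except'' a primitive cube root fails. For $k$ even, $(\theta_D,\theta_E,\theta_G)=(-1,i,-i)$ with $\alpha=k$ and $r=p=k/2$ solves $u=0$, $v=0$, and $\alpha+r+p=2k$; these are perfectly legitimate vanishing sums of roots of unity, so no amount of classifying short vanishing sums can rule them out. Only the cube-root constraint from \cite{NgS} excludes this solution. So the missing NgS step is not an optimization --- the argument does not close without it.
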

\begin{proof}
By \cite{NgS}, we have
\vspace{-12pt}
\begin{singlespace}
\[\zeta\dim(\mathcal{C}) = \tr(\theta_{I(X)}^3) = \left(2 +k\delta\right) + (2 + \alpha \delta)\theta_D^3 + (1 + r\delta)\theta_E^3 + (1 + p\delta)\theta_G^3. \]
\end{singlespace}
\noindent
for some root of unity $\zeta$. Taking the absolute value of both sides implies $\theta_D^3 = \theta_E^3 = \theta_G^3 = 1$. Also, Theorem 2.5 of \cite{ostrik} says that
\vspace{-12pt}
\begin{singlespace}
\[0 = \tr(\theta_{I(X)}) = \left(2 + k\delta\right) + (2 + \alpha \delta)\theta_D + (1 + r\delta)\theta_E + (1 + p\delta)\theta_G, \]
\end{singlespace}
\noindent
and since $\delta \notin \qq(\omega)$ this implies
\vspace{-12pt}
\begin{singlespace}
\[0 = 2 + 2\theta_D + \theta_E + \theta_G \qquad \text{and} \qquad 0 = k + \alpha\theta_D + r\theta_E + p\theta_G.\]
\end{singlespace}
\noindent
Since $\theta_D \neq 1$ (otherwise the above equation would be strictly greater than $0$), we have $\theta_D = \omega$ which implies $\theta_E = \theta_G = \omega^2$.
\end{proof}

\begin{cor} \label{r+p=krem}
We have $\alpha = k$ and $r + p = k$.
\end{cor}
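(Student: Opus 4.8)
The key input is already recorded inside the proof of Proposition \ref{rootsofunity}: splitting the vanishing trace $\tr(\theta_{I(X)}) = 0$ according to the $\qq$-basis $\{1,\delta\}$ of $\qq(\sqrt c)$ (using $\delta\notin\qq(\omega)$) produced the two scalar identities
\[
0 = 2 + 2\theta_D + \theta_E + \theta_G, \qquad 0 = k + \alpha\theta_D + r\theta_E + p\theta_G .
\]
I would start from the second of these, since it is the one carrying the coefficients $\alpha, r, p$ that we want to pin down.

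Next I would substitute the values of the twists computed in Proposition \ref{rootsofunity}, namely $\theta_D = \omega$ and $\theta_E = \theta_G = \omega^2$, to obtain $0 = k + \alpha\omega + (r+p)\omega^2$. Using the relation $\omega^2 = -1-\omega$ coming from $x^2+x+1$, this rewrites as
\[
0 = \bigl(k-(r+p)\bigr) + \bigl(\alpha-(r+p)\bigr)\omega .
\]
Since $\omega$ is a primitive cube root of unity, $\{1,\omega\}$ is linearly independent over $\qq$, so both coefficients vanish: $r+p = k$ and $\alpha = r+p$. Combining these two gives $\alpha = k$ as well. (Alternatively, once $r+p=k$ is known, $\alpha=k$ is forced by the relation $\alpha+r+p = 2k$ from Proposition \ref{simpleobjects}.)

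There is no real obstacle here; the statement is essentially a bookkeeping consequence of the twist computation. The only point requiring a word of care is the justification that $1$ and $\omega$ are $\qq$-linearly independent, i.e.\ that $\omega\notin\qq$, which is immediate since $x^2+x+1$ is irreducible over $\qq$; this is what lets us separate the displayed equation into its rational and $\omega$-components.
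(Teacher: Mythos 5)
Your argument is correct and is essentially the paper's: both start from the identity $0 = k + \alpha\omega + r\omega^2 + p\omega^2$ derived in Proposition~\ref{rootsofunity} and extract two scalar relations. The paper takes the imaginary part to get $\alpha = r+p$ and then invokes $\alpha + r + p = 2k$ from Proposition~\ref{simpleobjects}, whereas you decompose in the $\{1,\omega\}$ basis and read off both $r+p = k$ and $\alpha = r+p$ at once; this is a trivial reformulation (real/imaginary parts versus the $\{1,\omega\}$ basis), and you even note the paper's variant yourself.
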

\begin{proof}
We have $0 = k + \alpha\omega + r\omega^2 + p\omega^2$. Setting the imaginary part equal to $0$ implies $\alpha = r + p$. Then, Proposition \ref{simpleobjects} gives $2k = \alpha + r + p = 2\alpha$, so we have $\alpha = k$ and $r + p = k$.
\end{proof}

\subsection{Categorifications of $K_1(e)$}
Recall the dimensions of $A, B, C, D,$ $E, G, H, J \in \mathcal{Z}(\mathcal{C})$ from Proposition \ref{simpleobjects}, and the decomposition of $I(Y)$ from Proposition \ref{Yprop}. Let $\theta_i = \theta_{L_i}$. We will take the trace of $\theta_{I(Y)}$ and $\theta_{I(Y)}$ and use the results in \cite{NgS} to arrive at expressions for the sums of the twists $\sum \gamma_i\theta_i$ and the sum of their squares $\sum \gamma_i^2\theta_i^2$ in terms of the parameter $k$. By Theorem 2.5 of \cite{ostrik}, we have
\vspace{-12pt}
\begin{singlespace}
\begin{align*}
%&= k\tr(\theta_A) + k\tr(\theta_B) + k\tr(\theta_C) + k\tr(\theta_D) + r \tr(\theta_E) + p \tr(\theta_G) + r \tr(\theta_H) + p \tr(\theta_J) + \sum{\gamma_i \tr(L_i)} \\
0 = \tr(\theta_{I(Y)}) &= k(1 + k\delta) + k(2 + k\delta) + k(2 + k\delta) + k(2 + k\delta)\omega \\
& \qquad + 2r(1 + r\delta)\omega^2 + 2p(1 + p\delta) \omega^2 + \sum{\gamma_i^2 \theta_i \delta} \\
\Rightarrow \quad \sum{\gamma_i^2 \theta_i \delta} &= -k(1 + k\delta) - k(2 + k\delta) - k(2 + k\delta) - k(2 + k\delta)\omega \\
& \qquad - 2r(1 + r\delta)\omega^2 - 2p(1 + p\delta) \omega^2 \\
%&= 5k + 3k^2\delta + k(2 + k\delta)\omega + 2r(1 + r\delta)\omega^2 + 2p(1 + pd)\omega^2 + \sum{\gamma_i^2 \theta_i d} \\
%&= 5k + 3k^2\delta - \frac{1}{2}\left(k(2 + k\delta) + 2r(1 + r\delta) + 2p(1 + p\delta)\right) \\
%\displaybreak[0]
%& \quad \pm \frac{\sqrt{3}i}{2}\left(k(2 + k\delta) - 2r(1 + r\delta) - 2p(1 + p\delta)\right) + \sum{\gamma_i^2 \theta_i \delta} \\
%%&= \frac{-5k}{\delta} - 3k^2 + \frac{1}{2\delta}\left(k(2 + k\delta) + 2r(1 + r\delta) + 2p(1 + p\delta)\right) \\
%%& \qquad \pm \frac{\sqrt{3}i}{2\delta}\left(-k(2 + k\delta) + 2r(1 + r\delta) + 2p(1 + p\delta)\right) \\
%%&= \frac{-5k}{\delta} - 3k^2 + \frac{1}{2\delta}\left(2k + k^2\delta + 2r + 2r^2\delta + 2p + 2p^2\delta\right) \\
%%& \qquad \pm \frac{\sqrt{3}i}{2d}\left(-2k - k^2d + 2r + 2r^2d + 2p + 2p^2d\right) \\
%\sum{\gamma_i^2 \theta_i} &= \frac{-5k}{\delta} - 3k^2 + \frac{1}{2\delta}\left(2(k + r + p) + d(k^2 + 2r^2 + 2p^2)\right) \\
%& \qquad \pm \frac{\sqrt{3}i}{2d}\left(2(-k + r + p) + d(-k^2 + 2r^2 + 2p^2)\right) \\
%&= \frac{-5k}{\delta} - 3k^2 + \frac{2k}{\delta} + \frac{k^2}{2} + (r^2 + p^2) \pm \frac{\sqrt{3}i}{2}(-k^2 + 2(r^2 + p^2)) \\
%%&= \frac{-3k}{\delta} - 3k^2 + \frac{k^2}{2} + (k^2 - 2rp) \pm \frac{\sqrt{3}i}{2}(-k^2 + 2(k^2 - 2rp)) \\
%&= -3k\left(\frac{-k}{2} + \frac{1}{6}\sqrt{9k^2 + 12}\right) - \frac{3k^2}{2} -2rp \pm \frac{\sqrt{3}i}{2}(-k^2 + 2k^2 - 4rp) \\
%%&= \frac{3k^2}{2} - \frac{3k}{6}\sqrt{9k^2 + 12} - \frac{3k^2}{2} - 2rp \pm \frac{\sqrt{3}i}{2}(k^2 - 4rp)
\sum{\gamma_i^2 \theta_i} &= \frac{-k}{2}\sqrt{9k^2 + 12} - 2rp \pm \frac{\sqrt{3}i}{2}(k^2 - 4rp).
\end{align*}
\end{singlespace}
\noindent
Next, \cite{NgS} says that if a simple object $Y$ is self-dual $\tr(\theta_{I(Y)}^2) = \pm \dim(\mathcal{C})$. We apply this result to $Y$:
\vspace{-12pt}
\begin{singlespace}
\begin{align*}
\sum{\gamma_i^2\theta_i^2} &= \frac{-k}{2}\sqrt{9k^2 + 12} - 2rp \mp \frac{\sqrt{3}}{2}i(k^2 - 4rp) \pm \frac{\dim(\mathcal{C})}{d} \\
%&= \frac{-k}{2}\sqrt{9k^2 + 12} - 2rp \mp \frac{\sqrt{3}}{2}i(k^2 - 4rp) \pm \left(\frac{6}{\delta} + 3k\right) \\
%&= \frac{-k}{2}\sqrt{9k^2 + 12} - 2rp \mp \frac{\sqrt{3}}{2}i(k^2 - 4rp) \pm \left(6\left(\frac{-k}{2} + \frac{1}{6}\sqrt{9k^2 + 12}\right) + 3k\right) \\
%&= \frac{-k}{2}\sqrt{9k^2 + 12} - 2rp \mp \frac{\sqrt{3}}{2}i(k^2 - 4rp) \pm (-3k + \sqrt{9k^2 + 12} + 3k)
&= \frac{-k}{2}\sqrt{9k^2 + 12} - 2rp \mp \frac{\sqrt{3}}{2}i(k^2 - 4rp) \pm \sqrt{9k^2 + 12}.
\end{align*}
\end{singlespace}

We now have expressions for a sum of roots of unity and the sum of their squares equal to some quadratic irrationalities in the same field. We will use the following result, which is proved in section \ref{rootssection}, in the proof of Theorem \ref{fam1}. In the statement, $\phi$ is the Euler-phi function.

\begin{prop} \label{rootsprop}
Let $c = \prod_{i = 1}^t{p_i}$ be an odd integer divisible by $3$ where the $p_i$ are distinct primes, and let $a, b, d \in \zz_{\geq 0}$. If there are $N$ roots of unity $\theta_i$ such that $\sum{\theta_i} = -a - b\sqrt{c}$ and $\sum{\theta_i^2} = -a - d\sqrt{c}$, then $N \geq P(a, b, c, d)$, where \[P(a, b, c, d) = \begin{cases} b\phi(c) + d\phi(c) + b + 2a &\text{if $c \equiv 3 \mod 4$ and $t$ is odd} \\ b\phi(c) - 2b + 2a &\text{otherwise.}\end{cases} \]
\end{prop}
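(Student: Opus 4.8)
The plan is to put all the roots of unity $\theta_i$ inside a single cyclotomic field $\qq(\zeta_m)$, write $m=2^{s}m'$ with $m'$ odd, and analyze the two power sums $S_1=\sum_i\theta_i=-a-b\sqrt c$ and $S_2=\sum_i\theta_i^{2}=-a-d\sqrt c$ simultaneously, using the arithmetic of the quadratic subfield $\qq(\sqrt c)$. The two hypotheses are genuinely coupled by the squaring map $\theta\mapsto\theta^{2}$, and the strategy hinges on how this map behaves on roots of unity of various orders: on a root of unity of odd order it agrees with the Frobenius automorphism $\sigma_2\in\mathrm{Gal}(\qq(\zeta_{m'})/\qq)$, which scales $\sqrt c$ by $\chi_c(2)=\pm1$; it sends a root of order twice an odd number to a root of odd order; and it sends a root of order divisible by $4$ to a root of even order with strictly smaller $2$-adic valuation of the order. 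So I would first separate the $\theta_i$ into the odd-order ones, the ``$-\eta$ with $\eta$ of odd order'' ones, and the ones of order divisible by $4$, and record how each type contributes to the rational part, to the $\sqrt c$-part of $S_1$, and --- after squaring --- to the corresponding parts of $S_2$.

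The quantitative input is the theorem already proved in this section: a sum of roots of unity equal to $\pm b\sqrt c$ needs at least $|b|\phi(2c)=b\phi(c)$ terms, since $c$ is odd. I would apply it to the $\sqrt c$-carrying part of $S_1$ and, separately, to the $\sqrt c$-carrying part of $S_2$ (whose summands are the $N$ roots of unity $\theta_i^{2}$). I also need the cheapest ``robust'' way to manufacture a prescribed rational integer: the block $\{\omega,\omega^{2}\}$ of primitive cube roots contributes $-1$ to \emph{both} $\sum\theta_i$ and $\sum\theta_i^{2}$, so $a$ such blocks, i.e.\ $2a$ roots, realize the common rational part $-a$ in both power sums, and one shows no cheaper rational-part mechanism survives the squaring constraint --- this is where the $2a$ in $P(a,b,c,d)$ comes from. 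Finally I would pin down the minimal Gauss-sum representation $\pm\sqrt c=\sum_{(j,c)=1}\chi_c(j)\zeta^{\,j}$ (with $\zeta$ a $c$-th root when $c\equiv1\bmod4$ and a $4c$-th root when $c\equiv3\bmod4$) and the fact that squaring this block produces the purely rational value $\pm\mu(c)=\pm(-1)^{t}$; the congruence of $c$ mod $4$ and the sign $(-1)^{t}$ are precisely the data distinguishing the two branches of $P$.

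With these in hand, in the ``otherwise'' branch ($c\equiv1\bmod4$, so $\sqrt c\in\qq(\zeta_c)$, or $c\equiv3\bmod4$ with $t$ even) the $\sqrt c$-direction can be carried by roots of unity that also contribute to the $\sqrt c$-direction of $S_2$ after squaring, so the $S_1$- and $S_2$-obligations overlap almost completely and the count $b\phi(c)$ is corrected only by the amount reabsorbed into the rational residues, giving $b\phi(c)-2b+2a$. In the remaining branch ($c\equiv3\bmod4$ and $t$ odd) one has $\sqrt c\notin\qq(\zeta_c)$, so the $\sqrt c$-direction of $S_1$ must be carried by roots of order divisible by $4$; their squares have even order but fail to reconstruct the $\sqrt c$-direction of $S_2$ --- the obstruction is exactly that the leftover from squaring a minimal such block is the \emph{rational} number $\pm\mu(c)=\pm1$ rather than a multiple of $\sqrt c$, and $t$ odd is what rules out the alternative cancellations --- so a disjoint family of at least $d\phi(c)$ further roots of order divisible by $4$ is forced. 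Adding the $2a$ roots for the common rational part and the $b$ leftover roots gives $b\phi(c)+d\phi(c)+b+2a$.

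I expect the main obstacle to be turning these overlap heuristics into rigorous inequalities: one must show that the roots responsible for the $\sqrt c$-content of $S_1$ cannot be recycled for the $\sqrt c$-content of $S_2$ in the $c\equiv3\bmod4$, $t$ odd case (and can be, up to the stated correction, otherwise), which requires controlling vanishing sums of roots of unity whose orders are a power of $2$ times a squarefree odd number --- an analysis in the spirit of the classification of minimal vanishing sums of roots of unity --- together with a careful accounting of how the unavoidable rational residues $\pm\mu(c)$ interact with the fact that the hypotheses force the same integer $-a$ to be the rational part of both power sums.
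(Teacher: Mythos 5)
Your proposal assembles the right raw materials---Gauss sums for $\sqrt{c}$, the role of $c \bmod 4$ and the M\"obius sign $(-1)^t$, the fact that a pair of cube roots realizes $-1$ in both power sums (hence the $2a$)---but it stops short of the mechanism that actually produces a rigorous lower bound, and you acknowledge this yourself in the final paragraph. The plan to invoke Theorem~\ref{a+bsqrtc} separately on the $\sqrt{c}$-carrying part of $\sum\theta_i$ and on that of $\sum\theta_i^2$, and then to argue that the two obligations ``overlap almost completely'' in one branch and ``cannot be recycled'' in the other, is precisely the step that does not go through as stated: the same $\theta_i$ contribute to both power sums, so the two applications of Theorem~\ref{a+bsqrtc} count overlapping sets of roots, and there is no principled way to add or subtract the two bounds without a quantitative handle on the overlap. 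Informal accounting of ``leftovers'' and ``obstructions'' does not yield an inequality.

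The paper resolves this with two ingredients you do not have. First, it proves a strengthened Galois-averaging lemma (Lemma~\ref{newrtlm}) that simultaneously controls $\sum\theta_i$ \emph{and} $\sum\theta_i^2$: after summing over $\mathrm{Gal}(\qq(\zeta_x)/\qq(\sqrt{c}))$, one may assume all roots are $L$-th roots of unity (with $L = \mathrm{LCM}(2^{\epsilon+2}c,3)$) and that the multiplicities are Galois-invariant, so the problem reduces to a finite list of Galois orbits, each with a known size, orbit sum, and orbit sum of squares. Second---and this is the key idea you are missing---the paper constructs an explicit linear functional $f(x+y\sqrt{c},\,z+w\sqrt{c})$ (e.g.\ $y\phi(c)+w\phi(c)+x+z+y$ in the branch $c\equiv 3 \bmod 4$, $t$ odd; $y\phi(c)+x+z-2y$ otherwise) and verifies from the orbit table that every orbit $O$ satisfies $\frac{1}{|O|}\,f\!\left(\sum_{\zeta\in O}\zeta,\ \sum_{\zeta\in O}\zeta^2\right)\ge -1$. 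Applying $f$ to the totals $M(-a-b\sqrt{c})$, $M(-a-d\sqrt{c})$ and summing the per-orbit bound then gives $\sum_i|O_i|\ge M\cdot P(a,b,c,d)$ in one stroke, with no double counting and no overlap heuristics. In short, what you need is not ``separate bounds on $S_1$ and $S_2$ plus an overlap argument'' but a single linear certificate evaluated on the joint data $(\text{orbit sum},\text{orbit sum of squares})$, together with the joint-averaging Lemma~\ref{newrtlm} that makes the orbit decomposition available.
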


We will also need the following lemma.
\begin{lm} \label{philm}
Let $c = 3x \geq 33$ be an odd squarefree integer. Then $\frac{\phi(c)}{\sqrt{c}} \geq \frac{\phi(33)}{\sqrt{33}}.$ Similarly, if $c = 3x \geq 21$, then $\frac{\phi(c)}{\sqrt{c}} \geq \frac{\phi(21)}{\sqrt{21}}$.
\end{lm}
\begin{proof}
Since $\frac{\phi(c)}{\sqrt{c}} = \frac{\phi(3)}{\sqrt{3}} \cdot \frac{\phi(x)}{\sqrt{x}}$, it is sufficient to show that $\frac{\phi(x)}{\sqrt{x}} \geq \frac{\phi(11)}{\sqrt{11}}$. If $x$ is prime, then 
$\frac{\phi(x)}{\sqrt{x}} = \frac{x - 1}{\sqrt{x}} \geq \frac{11 - 1}{\sqrt{11}} = \frac{\phi(11)}{\sqrt{11}}$.
Otherwise, $x$ is divisible by at least two primes. Since $2$ and $3$ do not divide $x$, we have
$\frac{\phi(x)}{\sqrt{x}} \geq \frac{\phi(5)\phi(7)}{\sqrt{5 \cdot 7}} \geq \frac{\phi(11)}{\sqrt{11}}$. Replacing $11$ with $7$, the argument with $21$ becomes identical.
\end{proof}
\begin{thm} \label{fam1}
Assume the based ring $K_1(e)$ is categorifiable. Then $e = 0, 2, 3, 6$.
\end{thm}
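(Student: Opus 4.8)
The plan is to assume $e \neq 0, 2, 3, 6$, so that $k = e/3 \geq 3$ is a positive integer (we already know $3 \mid e$ for $e \neq 2$, and $e = 3$, $e = 6$ correspond to $k = 1, 2$), derive a contradiction by counting roots of unity. The setup from this section gives us, for the decomposition $I(Y) = kA + kB + kC + kD + rE + pG + rH + pJ + \sum \gamma_i L_i$, that the twists $\theta_i = \theta_{L_i}$ are roots of unity with $F(L_i) = \gamma_i Y$, that $\sum \gamma_i^2 = 6 + 5k^2 - 2(r^2+p^2)$ (Proposition~\ref{Yprop}), and the two trace identities
\begin{align*}
\sum \gamma_i^2 \theta_i &= \tfrac{-k}{2}\sqrt{9k^2+12} - 2rp \pm \tfrac{\sqrt{3}i}{2}(k^2 - 4rp),\\
\sum \gamma_i^2 \theta_i^2 &= \tfrac{-k}{2}\sqrt{9k^2+12} - 2rp \mp \tfrac{\sqrt{3}i}{2}(k^2-4rp) \pm \sqrt{9k^2+12}.
\end{align*}
By Corollary~\ref{r+p=krem} we have $r + p = k$, so $0 \le rp \le k^2/4$ and hence $4rp \le k^2$; this controls the imaginary parts and, crucially, $\sum\gamma_i^2 = 6 + 5k^2 - 2(r^2+p^2) = 6 + 5k^2 - 2(k^2 - 2rp) = 6 + 3k^2 + 4rp$.

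First I would reduce to a real, single-radical statement so that Proposition~\ref{rootsprop} applies. Write $c$ for the squarefree part of $9k^2 + 12$, so $\sqrt{9k^2+12} = m\sqrt{c}$ for some positive integer $m$ with $m^2 c = 9k^2 + 12$, and by Lemma~\ref{clm} $c$ is odd, divisible by $3$, and coprime to $10$. The idea is to pick a field embedding / Galois conjugation $\sigma$ fixing $\sqrt{c}$ but sending $\omega \mapsto \bar\omega$ (equivalently $i \mapsto -i$ in the relevant $\mathbb{Q}(\sqrt{3}\,i)$ direction), and to average: adding $\sum\gamma_i^2\theta_i$ and its image under $\sigma$ kills the imaginary part and yields $\sum \gamma_i^2(\theta_i + \sigma\theta_i) = -km\sqrt{c} - 4rp$, a sum of $2\sum\gamma_i^2 = 2(6+3k^2+4rp)$ roots of unity (the $\theta_i + \sigma\theta_i$ are roots of unity or, if $\sigma\theta_i$ = $\theta_i$, twice a $\pm1$; in any case the left side is a sum of at most $2\sum\gamma_i^2$ roots of unity — one has to be a little careful when $\theta_i \in \mathbb{R}$, but those just contribute $\pm 2$ which is a sum of two roots of unity). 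Doing the same to the second identity gives $\sum\gamma_i^2(\theta_i^2 + \sigma\theta_i^2) = -km\sqrt{c} - 4rp \pm 2m\sqrt{c}$, again a sum of at most $2\sum\gamma_i^2$ roots of unity. Matching the form $\sum\theta_i = -a - b\sqrt{c}$, $\sum\theta_i^2 = -a - d\sqrt{c}$ of Proposition~\ref{rootsprop} with $a = 4rp \ge 0$, $b = km$, $d = km \mp 2m = |k\mp 2|\,m$ (nonnegative since $k \ge 3$), the proposition forces
\[
2\big(6 + 3k^2 + 4rp\big) \;\ge\; P(4rp,\ km,\ c,\ |k\mp2|\,m).
\]

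The endgame is a size estimate showing this inequality fails for $k \ge 3$. In the worst (smaller) branch, $P = b\phi(c) - 2b + 2a = km\,\phi(c) - 2km + 8rp$, so the inequality becomes $12 + 6k^2 + 8rp \ge km\phi(c) - 2km + 8rp$, i.e. $km\,\phi(c) \le 12 + 6k^2 + 2km = 12 + 6k^2 + 2\sqrt{9k^2+12}\cdot\tfrac{k}{\sqrt c}\cdot\sqrt c$ — cleaner to keep $m\sqrt{c} = \sqrt{9k^2+12}$ and write $\phi(c)/\sqrt{c} \ge \phi(21)/\sqrt{21}$ for $c \ge 21$ (Lemma~\ref{philm}), so $m\phi(c) = \sqrt{9k^2+12}\cdot \phi(c)/\sqrt{c} \ge \sqrt{9k^2+12}\cdot\phi(21)/\sqrt{21}$. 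This gives $k\sqrt{9k^2+12}\cdot \phi(21)/\sqrt{21} \le 12 + 6k^2 + 2k\sqrt{9k^2+12}$, and since $\sqrt{9k^2+12} > 3k$ the left side grows like $(\phi(21)/\sqrt{21})\cdot 3k^2 \approx (12/\sqrt{21})\cdot 3k^2 \approx 7.86\,k^2$ while the right is $\approx 6k^2 + 6k^2 = $ at most about $12 + 6k^2 + 2k\sqrt{9k^2+12}$; a direct comparison shows this fails once $k$ is large enough, and the finitely many small $k$ with $c \ge 21$ (i.e. $k \ge 3$, excluding any $k$ where $9k^2+12$ has squarefree part below $21$ — one checks $k=3$ gives $99 = 9\cdot 11$, squarefree part $33$; $k=4$ gives $156 = 4\cdot 39$, squarefree part $39$; etc., all $\ge 21$) are knocked out by hand using the sharper Lemma~\ref{philm} bound $\phi(33)/\sqrt{33}$ or the exact values. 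The branch $c \equiv 3 \bmod 4$ with $t$ odd only makes $P$ larger, so it is easier. The main obstacle is bookkeeping: correctly tracking the nonnegativity hypotheses of Proposition~\ref{rootsprop} (ensuring $a = 4rp \ge 0$ and $d = |k \mp 2|m \ge 0$ in both sign branches), justifying that the conjugation-and-add step really produces a sum of the stated number of roots of unity including the degenerate real cases, and then verifying the final inequality is violated for every $k \ge 3$ — the asymptotics are comfortable but the small cases $k = 3, 4, 5$ need the explicit $\phi(c)$ values rather than the clean Lemma~\ref{philm} bound.
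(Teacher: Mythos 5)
Your overall strategy matches the paper's: conjugate-and-add to kill the imaginary parts, apply Proposition~\ref{rootsprop} to the pair of resulting real identities, and use Lemma~\ref{philm} to turn $\phi(c)/\sqrt{c}$ into a usable lower bound. The setup, the identification $a=4rp$, $b=km$, $d=|k\mp2|m$, and the use of $\sum\gamma_i^2 = 6+3k^2+4rp$ are all correct and in line with the paper.

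There is, however, a genuine gap in the endgame. You assert that for $k \ge 3$ the squarefree part $c$ of $9k^2+12$ is always at least $21$, checking only $k=3,4$. This is false: for $k=8$, $9k^2+12 = 588 = 2^2\cdot 7^2\cdot 3$, so $c = 3$; and the Pell-type equation $m^2 - 3k^2 = 4$ has infinitely many solutions ($k = 0, 2, 8, 30, 112, \dots$), so $c = 3$ recurs for arbitrarily large $k$. When $c = 3$, your ``smaller branch'' $P = b\phi(c) - 2b + 2a$ collapses, since $\phi(3) = 2$ gives $b\phi(c)-2b = 0$ and hence $P = 2a = 8rp$; the inequality $12+6k^2+8rp \ge 8rp$ is then vacuously true and yields no contradiction whatsoever. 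The dismissal ``the branch $c \equiv 3 \bmod 4$ with $t$ odd only makes $P$ larger, so it is easier'' does not rescue this, because you never established the contradiction in the smaller branch for $c=3$ to begin with — there is nothing for the larger branch to be ``easier than.'' Since $3 \equiv 3 \pmod 4$ and $t=1$ is odd, the case $c=3$ genuinely invokes the larger branch $P = b\phi(c)+d\phi(c)+b+2a$, and it is precisely the extra $d\phi(c)$ term (fed by the \emph{second} trace identity) that produces the constraint $12+6k^2 \ge (5k-4)\sqrt{3k^2+4}$, which the paper then shows fails for $k > 3$ (and $k = 3$ has $c = 93 \ne 3$). Your proposal needs this as a separate, explicitly worked case; as written it does not rule out $k = 8, 30, 112, \dots$
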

\begin{proof}
Consider the following sum of roots of unity and the sum of their squares:
\vspace{-12pt}
\begin{singlespace}
\begin{align}
\sum{\gamma_i^2\theta_i} + \sum{\gamma_i^2\bar{\theta_i}} &= -k\sqrt{9k^2 + 12} - 4rp \label{cc1}\\
\sum{\gamma_i^2\theta_i^2} + \sum{\gamma_i^2\bar{\theta_i^2}} &= (-k \pm 2)\sqrt{9k^2 + 12} - 4rp \label{cc2}
\end{align}
\end{singlespace}
\noindent

By Proposition \ref{Yprop}, \eqref{cc1} and \eqref{cc2} contain at most $2 \cdot \sum{\gamma_i^2} = 12 + 6k^2 + 8rp$ roots of unity. Let $c$ be the squarefree part of $9k^2 + 12$, and let $a = 4rp$, $b = k\sqrt{\frac{9k^2 + 12}{c}}$, and $d = (k \pm 2)\sqrt{\frac{9k^2 + 12}{c}}$.

\noindent
If {\boldmath $c = 3$}, then Proposition \ref{rootsprop} gives
\vspace{-12pt}
\begin{singlespace}
\begin{align*}
12 + 6k^2 + 8rp &\geq P(a, b, c, d) \geq k\sqrt{3k^2 + 4} \cdot 2 + (k - 2)\sqrt{3k^2 + 4} \cdot 2 + k\sqrt{3k^2 + 4} + 8rp \\
\Rightarrow \quad 12 + 6k^2 &\geq (5k - 4)\sqrt{3k^2 + 4}.
\end{align*}
\end{singlespace}
\noindent
This fails for $k > 3$. When $k = 3$, we have $c = 93$, so $k \leq 2$.

\medskip
\noindent
When {\boldmath{$c \neq 3$}}, Proposition \ref{rootsprop} gives
\vspace{-12pt}
\begin{singlespace}
\begin{align*}
12 + 6k^2 + 8rp &\geq P(a, b, c, d) \geq b\phi(c) - 2b + 2a \\
&\geq k\sqrt{\frac{9k^2 + 12}{c}}\phi(c) - 2k\sqrt{\frac{9k^2 + 12}{c}} + 8rp \\
12 + 6k^2 &\geq \frac{\phi(c)}{\sqrt{c}}k\sqrt{9k^2 + 12} - 2k\sqrt{\frac{9k^2 + 12}{c}}.
\intertext{If {\boldmath{$c = 21$}}, the above equation implies $k \leq 3$; when $k = 3$ we have $k = 93$, so $k \leq 2$. When {\boldmath{$c \geq 33$}}, using Lemma \ref{philm},}
12 + 6k^2 &\geq \frac{\phi(33)}{\sqrt{33}}k\sqrt{9k^2 + 12} - 2k\sqrt{\frac{9k^2 + 12}{33}} \quad \Rightarrow \quad k \leq 2.
\end{align*}
\end{singlespace}
\noindent

%\medskip
%\noindent
%If $c > 21$ (since $(c, 10) = 1$ and $3 \mid c$) then {\boldmath $c \geq 33$}. Then (using Lemma~\ref{philm}):
%\vspace{-12pt}
%\begin{singlespace}
%\begin{align*}
%12 + 6k^2 + 8rp &\geq P(a, b, c, d) \geq b\phi(c) - 2b + 2a \\
%&\geq k\sqrt{\frac{9k^2 + 12}{c}}\phi(c) - 2k\sqrt{\frac{9k^2 + 12}{c}} + 8rp\\
%12 + 6k^2 &\geq \frac{\phi(33)}{\sqrt{33}}k\sqrt{9k^2 + 12} - 2k\sqrt{\frac{9k^2 + 12}{33}} \quad \Rightarrow \quad k \leq 2.
%\end{align*}
%\end{singlespace}
%\noindent

Thus we have shown that $k \leq 2$. This implies $e = 0, 3, 6$. However, recall that we have assumed $e \neq 2$. Hence, if $K_1(e)$ admits categorification then $e = 0, 2, 3, 6$.
\end{proof}

\section{Categorifications of $K_2(c)$} \label{fam2sec}
We follow a similar technique as in the previous section. Throughout this section, we assume that $\mathcal{C}$ is a fusion category with $K(\mathcal{C}) \simeq K_2(c)$ and arrive at a contradiction when $c > 2$. $K_2(0)$ is the based ring (5) of Theorem 1.1, and when $c = 1, 2$ we get (6).
Recall that the multiplication in $K_2(c)$ is given by
\vspace{-12pt}
\begin{singlespace}
\begin{align*}
X^2 &= cX + Y + cZ & Y^2 &= \mathbf{1} & Z^2 &= cX + Y + cZ \\
XY &= YX = Z & YZ &= ZY = X & XZ &= ZX = \mathbf{1} + cX + cZ
\end{align*}
\end{singlespace}

\subsection{Induction and Forgetful Functors} As before, we describe the simple objects in $\mathcal{Z}(\mathcal{C})$. Propositions \ref{so1} and \ref{so2} give the dimensions of simple objects and their images under the forgetful functor $F$.
Let $d = c + \sqrt{c^2 + 1}$.
Note that $d$ is irrational for $c > 0$. Since $K_2(0)$ is the Grothendieck ring of the category of representations of $\mathbb{Z}/4\mathbb{Z}$, and hence already known to admit categorification, we will assume that $d$ is irrational.
\begin{prop}
The dimensions of simple objects in $\mathcal{C}$ are $\dim(X) = \dim(Z) = d$ and $\dim(Y) = \dim(\mathbf{1}) = 1$, whence $\dim(\mathcal{C}) = 4 + 4cd$.
\end{prop}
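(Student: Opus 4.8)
The plan is to run the same argument as in Proposition~\ref{dims}. Recall that $\FPdim$ extends to a ring homomorphism $K(\mathcal{C}) \to \mathbb{R}$ taking each simple object to a positive real number, and that $\FPdim$ of a simple object is the Frobenius--Perron eigenvalue of its matrix of left multiplication. So the first step is to specialize the parameterized matrices $M_X, M_Y, M_Z$ to $(c,e,k,l,p,q) = (c,0,0,1,c,0)$ and read off their dominant eigenvalues.

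For $Y$: the matrix $M_Y$ is the permutation matrix of the involution that swaps $\mathbf{1} \leftrightarrow Y$ and $X \leftrightarrow Z$, so its characteristic polynomial is $(t^2 - 1)^2$; since $\FPdim(Y) > 0$ this forces $\FPdim(Y) = 1$. (Equivalently, $Y^2 = \mathbf{1}$ gives $\FPdim(Y)^2 = 1$.) For $X$: a short cofactor expansion shows the characteristic polynomial of $M_X$ is $t^4 - 2ct^3 - 2ct - 1 = (t^2 + 1)(t^2 - 2ct - 1)$, whose largest real root is $c + \sqrt{c^2 + 1} = d$; hence $\FPdim(X) = d$, and by duality $\FPdim(Z) = \FPdim(X^*) = d$ as well. (One could instead combine $XZ = \mathbf{1} + cX + cZ$ with $\FPdim(X) = \FPdim(Z)$ to get $\FPdim(X)^2 = 1 + 2c\,\FPdim(X)$, the same quadratic.) Of course $\FPdim(\mathbf{1}) = 1$.

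Finally, $\dim(\mathcal{C}) = \sum_i \FPdim(X_i)^2 = 1 + d^2 + 1 + d^2 = 2 + 2d^2$, and the relation $d^2 = 2cd + 1$ turns this into $\dim(\mathcal{C}) = 4 + 4cd$.

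There is no genuine obstacle here; the only points needing a word of care are the invocation of Perron--Frobenius (together with positivity of $\FPdim$) to pin down $\FPdim(X)$ as the root $c + \sqrt{c^2+1}$ rather than $c - \sqrt{c^2+1}$ or an imaginary root, and similarly $\FPdim(Y) = 1$ rather than $-1$, plus the routine factorization of the quartic characteristic polynomial of $M_X$.
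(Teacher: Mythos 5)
Your proof is correct and follows essentially the same route as the paper: specialize $M_X$ and $M_Y$ to $K_2(c)$, compute the characteristic polynomials $x^4 - 2cx^3 - 2cx - 1 = (x^2+1)(x^2-2cx-1)$ and $(x^2-1)^2$, and extract the Frobenius--Perron eigenvalues. The only cosmetic differences are your observation that $M_Y$ is a permutation matrix (the paper just states its characteristic polynomial) and your parenthetical alternatives via the fusion relations $Y^2 = \mathbf{1}$ and $XZ = \mathbf{1} + cX + cZ$, which the paper does not include.
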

\begin{proof}
The matrix for left multiplication by $X$ is
\vspace{-12pt}
\begin{singlespace}
\[M_X = \left(\begin{array}{cccc}
0 & 0 & 0 & 1 \\
1 & c & 0 & c \\
0 & 1 & 0 & 0 \\
0 & c & 1 & c
\end{array}\right).\]
\end{singlespace}
\noindent
The characteristic polynomial of $M_X$ is $x^4 - 2cx^3 - 2cx - 1 = (x^2 + 1)(x^2 - 2cx - 1)$, 
so $\dim(X) = \frac{2c + \sqrt{4c^2 + 4}}{2} = c + \sqrt{c^2 + 1}$. 
We know $\dim(X) = \dim(Z)$ because $X^* = Z$. The matrix for left multiplication by $Y$ is
\vspace{-12pt}
\begin{singlespace}
\[M_Y = \left(\begin{array}{cccc}
0 & 0 & 1 & 0 \\
0 & 0 & 0 & 1 \\
1 & 0 & 0 & 0 \\
0 & 1 & 0 & 0
\end{array}\right),\]
\end{singlespace}
\noindent
which has characteristic polynomial $(x + 1)^2(x - 1)^2$, so $\dim(Y) = \dim(\mathbf{1}) = 1$. We calculate
$\dim(\mathcal{C}) = \dim(\mathbf{1})^2 + \dim(X)^2 + \dim(Y)^2 + \dim(Z)^2 = 2 + 2d^2 = 4 + 4cd.$
\end{proof}
\begin{prop}  \label{so1}
There exist non-isomorphic simple objects $A, B, C, D, E, G, H$ in $\mathcal{Z}(\mathcal{C})$ for which $I(\mathbf{1}) = \mathbf{1} \op A \op B \op C$ and $I(Y) = D \op E \op G \op H$ and
\vspace{-12pt}
\begin{singlespace}
\begin{align*}
F(A) &= \mathbf{1} \op cX \op cZ & \dim(A) &= 1 + 2cd \\
F(B) &= \mathbf{1} \op gX \op hZ & \dim(B) &= 1 + cd \\
F(C) &= \mathbf{1} \op hX \op gZ & \dim(C) &= 1 + cd \\
F(D) &= jX \op Y \op kZ & \dim(D) &= 1 + (j+k)d \\
F(E) &= lX \op Y \op mZ & \dim(E) &= 1 + (l+m)d \\
F(G) &= nX \op Y \op pZ & \dim(G) &= 1 + (n+p)d \\
F(H) &= qX \op Y \op rZ & \dim(H) &= 1 + (q+r)d
\end{align*}
\end{singlespace}
\noindent
where $g + h = c$ and $j + l + n + q = k + m + p + r = 2c$
\end{prop}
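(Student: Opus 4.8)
The final statement to prove is Proposition~\ref{so1}, which describes the simple objects of $\mathcal{Z}(\mathcal{C})$, their images under the forgetful functor, and their dimensions. The plan is to mimic closely the proof of Proposition~\ref{simpleobjects} in the $K_1(e)$ case. First I would invoke Theorem 2.13 of \cite{ostrik}: since $K_2(c)$ has four irreducible representations, $I(\mathbf{1})$ decomposes into a sum of four simple objects of $\mathcal{Z}(\mathcal{C})$, and since $\dimhom(\mathbf{1}, I(\mathbf{1})) = \dimhom(F(\mathbf{1}),\mathbf{1}) = 1$, one of them is $\mathbf{1}$; write $I(\mathbf{1}) = \mathbf{1} \op A \op B \op C$. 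By Remark~\ref{fc=4}, two of the formal codegrees of $K_2(c)$ equal $4$, so by Theorem 2.13 of \cite{ostrik} I may take $\dim(B) = \dim(C) = \dim(\mathcal{C})/4 = 1 + cd$, whence $\dim(A) = \dim(\mathcal{C})/4 \cdot 2 - 1 = 1 + 2cd$. (I would double-check the codegree bookkeeping: the four codegrees are $f_1 \ge f_2 = f_3 = f_4$ or some split giving two equal to $4$; the dimension of the simple object $O$ in $I(\mathbf{1})$ attached to codegree $f$ is $\dim(\mathcal{C})/f$.)

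Next I would compute $F(I(\mathbf{1}))$ using Proposition 5.4 of \cite{ENO}: $F(I(\mathbf{1})) = \bigoplus_{W \in \mathbb{O}(\mathcal{C})} W \ot \mathbf{1} \ot W^* = 4\cdot \mathbf{1} \op 2cX \op 2Y \op 2cZ$, reading the coefficients of $X$, $Y$, $Z$ from the multiplication table. Writing $F(A) = \mathbf{1} \op \alpha_1 X \op \beta_1 Y \op \gamma_1 Z$ and similarly for $B$, $C$, and matching dimensions — using that $d$ is irrational, so the rational and $\sqrt{c^2+1}$-parts separate — I get $\beta_i = 0$ for each (since $Y$ has dimension $1$ and contributes to the rational part, while the total $Y$-multiplicity is $2$ and must be split so that dimensions work; one must argue that $Y$ cannot appear in $F(A), F(B), F(C)$ because $\dimhom(A, I(Y)) = \dimhom(F(A), Y)$ and the objects in $I(\mathbf{1})$ and $I(Y)$ are distinct, or more directly that any copy of $Y$ would make the rational part of $\dim$ exceed its forced value). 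Then the $X$- and $Z$-coefficients of $F(A)$ are forced to be $c$ and $c$ by the dimension $1 + 2cd$, and for $F(B) = \mathbf{1} \op gX \op hZ$, $F(C) = \mathbf{1} \op hX \op gZ$ with $g + h = c$; the swap between $B$ and $C$ is forced by $C = B^*$ (since $\mathbf{1}, A$ are self-dual and $X^* = Z$, duality permutes the remaining summands, and $B \not\cong C$). For $I(Y)$: since $\dimhom(I(\mathbf{1}), I(Y)) = \dimhom(F(I(\mathbf{1})), Y) = 2$, hmm — actually I would recompute; $F(I(\mathbf{1}))$ has $Y$-multiplicity $2$, but the summands of $I(\mathbf{1})$ have no $Y$, so in fact $\dimhom(I(\mathbf{1}), I(Y)) = 0$ forces $I(Y)$ to share no simple object with $I(\mathbf{1})$. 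Then $\dimhom(I(Y), I(Y)) = \dimhom(F(I(Y)), Y) = \bigoplus_W W\ot Y \ot W^*$ has $Y$-multiplicity $4$ (one from each $W$, since $W \ot Y \ot W^* = Y$ for invertible $W$ and also for $X$ one computes $X \ot Y \ot X^* = XYZ = $ some object containing $Y$ with multiplicity one — I'd verify via the table), so $I(Y)$ is a sum of exactly four simple objects $D \op E \op G \op H$, each containing $Y$ with multiplicity one in its image, and $F(D) \op F(E) \op F(G) \op F(H) = F(I(Y)) = 2cX \op 4Y \op 2cZ$, giving the stated forms with $j+l+n+q = k+m+p+r = 2c$.

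The main obstacle I anticipate is the careful bookkeeping of $F(I(Y)) = \bigoplus_{W} W \ot Y \ot W^*$: one must compute $X \ot Y \ot Z$ (and $Z \ot Y \ot X$) from the multiplication table, confirm that $Y$ appears with total multiplicity $4$ across all $W$ and that $X$, $Z$ each appear with total multiplicity $2c$, and that $\mathbf{1}$ does not appear — this last point is what decouples $I(Y)$ from $I(\mathbf{1})$ and is essential for concluding the $D, E, G, H$ are new simple objects distinct from $A, B, C$. A secondary subtlety, exactly as in Proposition~\ref{simpleobjects}, is justifying that no $Y$-summand sneaks into $F(A), F(B), F(C)$ and no extra $\mathbf{1}$ into $F(D), \dots, F(H)$; this follows from the adjunction identity $\dimhom(O, I(W)) = \dimhom(F(O), W)$ together with the irrationality of $d$ forcing the dimension equations to split into independent rational and irrational parts. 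Once these multiplicity computations are pinned down, the dimension formulas in the statement are immediate from $\dim \circ F = \dim$ and $\dim(X) = \dim(Z) = d$.
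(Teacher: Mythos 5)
Your proposal follows the paper's outline, but there are two problems, one of which is a genuine gap.

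First, the computation of $F(I(\mathbf{1}))$ is wrong: the multiplication table gives $Y \ot \mathbf{1} \ot Y^* = Y^2 = \mathbf{1}$, not $Y$, so
\[
F(I(\mathbf{1})) = \mathbf{1} \op XZ \op Y^2 \op ZX = 4 \cdot \mathbf{1} \op 2cX \op 2cZ,
\]
which has $Y$-multiplicity $0$, not $2$. Your claim that the $Y$-multiplicity is $2$ contradicts your own subsequent conclusion that $\dimhom(I(\mathbf{1}), I(Y)) = 0$; by adjunction $\dimhom(I(\mathbf{1}), I(Y)) = \dimhom(F(I(\mathbf{1})), Y)$, so these are literally the same number. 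Once $F(I(\mathbf{1}))$ is computed correctly there is nothing further to argue here: $F(A), F(B), F(C)$ automatically contain no $Y$, and $\dimhom(I(\mathbf{1}), I(Y)) = 0$ is immediate.

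Second, and more seriously, from $\dimhom(I(Y), I(Y)) = 4$ you jump directly to $I(Y)$ being a sum of four distinct simple objects. But $4 = 2^2$ also allows $I(Y) = 2D$ for a single simple object $D$, and this must be ruled out. The paper does so with a twist argument that your proposal omits entirely: if $I(Y) = 2D$ then $F(D) = cX \op 2Y \op cZ$, so $\dim(D) = 2 + 2cd \neq 0$, while Theorem 2.5 of \cite{ostrik} gives $0 = \tr(\theta_{I(Y)}) = 2\dim(D)\theta_D$, forcing $\theta_D = 0$, impossible since $\theta_D$ is a root of unity. Without this step the decomposition $I(Y) = D \op E \op G \op H$ is not established, and the rest of the proposition does not follow.
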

\begin{proof}
Two of the formal codegrees of $K_1(c)$ are $f_1 = f_2 = 4$. (see Remark \ref{fc=4}). By Theorem 2.13 of \cite{ostrik}, $I(\mathbf{1}) \in \mathcal{Z}(\mathcal{C})$ decomposes into the sum of $4$ simple objects $I(\mathbf{1}) = 1 \op A \op B \op C$; we can assume that $\dim(B) = \dim(C) = \frac{\dim(\mathcal{C})}{4} = 1 + cd$ whence $\dim(A) = \frac{\dim(\mathcal{C})}{2} - 1 = 1 + 2cd$. By Proposition 5.4 of \cite{ENO},
\vspace{-12pt}
\begin{singlespace}
\begin{align*}
F(I(\mathbf{1})) &= \bigoplus_{X \in \mathbb{O}(\mathcal{C})} X \ot \mathbf{1} \ot X^* \\
&= \mathbf{1} \op (\mathbf{1} \op cX \op cZ) \op \mathbf{1} \op (\mathbf{1} \op cX \op cZ) \\
&= 4 \cdot \mathbf{1} \op 2cX \op 2cZ
\end{align*}
\end{singlespace}
\noindent
and $1 + \dim(A) + \dim(B) + \dim(C) = 4 + 4cd$. 
Let $F(A) = \mathbf{1} \op aX \op bZ$, whence $1 + d(a+b) = \dim(A) = 1 + 2cd$ implies $a + b = 2c$.
Since this decomosition into simple objects is unique and $I(\mathbf{1})$ is self-dual and the dimension of $A$ is unique, $A$ must be self-dual. Hence, $a = b = c$.

Now let $F(B) = \mathbf{1} \op gX \op hZ$, whence $1 + (g+h)d = \dim(B) = 1 + cd$ implies $g + h = c$. By duality, $F(C) = \mathbf{1} \op hX \op gZ$. (In the case that $B$ and $C$ are self-dual, we have $g = h$ so this is true in general).

Since $\dimhom(I(\mathbf{1}), I(Y)) = \dimhom(F(I(\mathbf{1}), Y) = 0$, the simple objects in the decomposition of $I(Y)$ are distinct from $A, B, C$. By Proposition 5.4 of \cite{ENO},
\vspace{-12pt}
\begin{singlespace}
\begin{align*}
F(I(Y)) &= \bigoplus_{X \in \mathbb{O}(\mathcal{C})} X \ot Y \ot X^* \\
&= Y \op (cX \op Y \op cZ) \op Y \op (cX \op Y \op cZ) \\
&= 2cX \op 4Y \op 2cZ
\end{align*}
\end{singlespace}
\noindent
Hence, $\dimhom(I(Y), I(Y)) = \dimhom(F(I(Y)), Y) = 4$, so either $I(Y) = 2D$ for some simple object $D$ or $I(Y) = D \op E \op G \op H$ for distinct simple objects $D, E, G, H$. First assume that $I(Y) = 2D$. Then, $F(D) = cX \op 2Y \op cZ$, whence $\dim(D) = 2 + 2cd$. Let $\theta_D$ be the balance isomorphism. Now we have 
\vspace{-12pt}
\begin{singlespace}
\[0 = \tr\theta_{I(Y)} = 2\dim(D)\theta_D = 2(2 + 2cd)\theta_D\]
\end{singlespace}
\noindent
which implies $\theta_D = 0$, a contradiction. Hence $I(Y) = D \op E \op G \op H$.
\end{proof}

\begin{prop} \label{so2}
There exist simple objects $L_i$ distinct from $A, B, C, D, E, G, H$ in $\mathcal{Z}(\mathcal{C})$ such that 
\begin{enumerate}
\item $F(L_i) = \gamma_iX + \gamma_i^*Z$
\item $I(X) = cA \op gB \op hC \op jD \op lE \op nG \op qH \op \sum{\gamma_iL_i}$
\item $\sum{\gamma_i(\gamma_i + \gamma_i^*)} \leq 4 + 3c^2$
\end{enumerate}
\end{prop}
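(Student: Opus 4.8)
The plan is to mimic the proofs of Propositions \ref{simpleobjects}--\ref{Yprop}. First I would compute $F(I(X))=\bigoplus_{W\in\mathbb O(\mathcal C)}W\ot X\ot W^*$ (Proposition 5.4 of \cite{ENO}) directly from the multiplication table of $K_2(c)$: using $X\ot X=Z\ot Z=cX\op Y\op cZ$, $X\ot Z=\mathbf 1\op cX\op cZ$, $Y\ot X=Z$ and $Y\ot Y=\mathbf 1$, the summand $W\ot X\ot W^*$ equals $X$ for $W\in\{\mathbf 1,Y\}$ and equals $c\mathbf 1\op(2c^2+1)X\op cY\op 2c^2 Z$ for $W\in\{X,Z\}$, so that
\[ F(I(X)) = 2c\,\mathbf 1\op(4c^2+4)X\op 2c\,Y\op 4c^2 Z. \]

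For (1) and (2) I would use that the $F\dashv I$ adjunction makes the multiplicity of a simple object $O$ in $I(X)$ equal to $\dimhom(F(O),X)$, the multiplicity of $X$ in $F(O)$. Reading these off Proposition \ref{so1} gives multiplicities $0,c,g,h,j,l,n,q$ for $\mathbf 1,A,B,C,D,E,G,H$ respectively, and I call $L_i$ (multiplicity $\gamma_i\ge 1$) the remaining simple summands, which is (2). Since each $L_i$ is distinct from $A,\dots,H$, it appears neither in $I(\mathbf 1)=\mathbf 1\op A\op B\op C$ nor in $I(Y)=D\op E\op G\op H$, so the same adjunction forces $F(L_i)$ to contain no copy of $\mathbf 1$ and no copy of $Y$; hence $F(L_i)=\gamma_i X\op\gamma_i^* Z$ for some $\gamma_i^*\ge 0$ (consistent with $F(L_i^*)=F(L_i)^*=\gamma_i Z\op\gamma_i^* X$), which is (1).

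For (3), since $F$ is a tensor functor it preserves dimensions, so $\dim I(X)=\dim F(I(X))$. Comparing the coefficient of $d$ on the two sides --- the right from the formula above, the left from the decomposition (2) together with the dimensions in Proposition \ref{so1}, the relation $g+h=c$, and $\dim L_i=(\gamma_i+\gamma_i^*)d$ --- yields
\[ 8c^2+4 \;=\; 3c^2 \;+\; S \;+\; \sum_i\gamma_i(\gamma_i+\gamma_i^*), \qquad S:=j(j+k)+l(l+m)+n(n+p)+q(q+r) \]
(equivalently one may expand $\dimhom\big(I(X),I(X)\op I(Z)\big)=\dimhom\big(F(I(X)),X\op Z\big)=8c^2+4$ over simple objects, using $I(Z)=I(X)^*$). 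The remaining task is to show $S\ge 2c^2$. Here I would use that $Y$ is self-dual, so $I(Y)$ is self-dual and duality permutes $\{D,E,G,H\}$; since $F$ commutes with duality, the multiset $\{(j,k),(l,m),(n,p),(q,r)\}$ of $(X,Z)$-multiplicities of $F(D),\dots,F(H)$ is invariant under swapping the two coordinates. Hence the multisets $\{j,l,n,q\}$ and $\{k,m,p,r\}$ coincide, so (writing the four pairs as $(a,b)$) one gets $\sum a^2=\sum b^2$ and therefore $S=\sum a(a+b)=\tfrac12\sum(a+b)^2$; since $\sum(a+b)=(j+l+n+q)+(k+m+p+r)=4c$ by Proposition \ref{so1}, Cauchy--Schwarz gives $S=\tfrac12\sum(a+b)^2\ge\tfrac12\cdot\frac{(4c)^2}{4}=2c^2$. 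Substituting back, $\sum_i\gamma_i(\gamma_i+\gamma_i^*)=5c^2+4-S\le 3c^2+4$.

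The step I expect to be the genuine obstacle is this last one: a bare dimension count only gives $\sum_i\gamma_i(\gamma_i+\gamma_i^*)\le 5c^2+4$, and the sharpening to $3c^2+4$ depends on recognizing that self-duality of $Y$ pairs up the forgetful images of $D,E,G,H$ under $X\leftrightarrow Z$, which converts $S$ into $\tfrac12\sum(a+b)^2$ --- precisely the form for which Cauchy--Schwarz supplies the missing $2c^2$.
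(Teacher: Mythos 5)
Your proof is correct and follows essentially the same route as the paper: compute $F(I(X))$ via Proposition 5.4 of \cite{ENO}, use adjunction to pin down the multiplicities and to force $F(L_i)$ to have no $\mathbf{1}$ or $Y$ component, and then derive the bound from the duality-induced identification of the multisets $\{j,l,n,q\}$ and $\{k,m,p,r\}$ plus Cauchy--Schwarz on $\sum(a+b)^2$. Your reformulation of $S$ as $\tfrac12\sum(a+b)^2$ and the paper's summing of the $X$- and $Z$-coefficient equations (plus their duality-conjugates) are the same algebraic manipulation written in two ways, so there is no substantive difference.
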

\begin{proof}
Let $L_i = \phi_i \cdot \mathbf{1} \op \gamma_iX \op \psi_iY \op \gamma_i^*Z$ such that (1) is satisfied. (We can assume $\gamma_i \neq 0$ because $\dimhom(L_i, I(X)) = \dimhom(F(L_i), X) = \gamma_i$. That is, if $\gamma_i = 0$ then $L_i$ does not appear in the decomposition of $I(X)$.)
Then we compute
\vspace{-12pt}
\begin{singlespace}
\begin{align*}
F(I(X)) &= cF(A) \op gF(B) \op hF(C) \op jF(D) \op lF(E) \op nF(G) \op qF(H) \op \sum{\gamma_iF(L_i)} \\
&= c(\mathbf{1} \op cX \op cZ) \op g(\mathbf{1} \op gX \op hZ) \op h(\mathbf{1} \op hX \op gZ) \op j(jX \op Y \op kZ) \\
& \quad \op l(lX \op Y \op mZ) \op n(nX \op Y \op pZ) \op q(qX \op Y \op rZ) \\
& \quad \op \sum{\gamma_i(\phi_i \cdot \mathbf{1} \op \gamma_iX \op \psi_iY \op \gamma_i^*Z)} \\
&= (c + g + h + \sum{\gamma_i\phi_i})\cdot\mathbf{1} \op (c^2 + g^2 + h^2 + j^2 + l^2 + n^2  + q^2 + \sum{\gamma_i^2})X \\
& \quad \op (j + l + n + q + \sum{\gamma_i\psi_i})Y \op (c^2 + 2gh + jk + lm + np + qr + \sum{\gamma_i\gamma_i^*})Z
\end{align*}
\end{singlespace}
\noindent
Using Theorem 5.4 of \cite{ENO}, we compute
\vspace{-12pt}
\begin{singlespace}
\[F(I(X)) = \bigoplus_{Y \in \mathbb{O}(\mathcal{C})} Y \ot X \ot Y^* = 2c \cdot \mathbf{1} + (4 + 4c^2)X + 2cY + 4c^2Z.\]
\end{singlespace}
\noindent
Hence, $c + g + h + \sum{\gamma_i\phi_i} = 2c$ implies $\phi_i = 0$ for all $i$. Also $j + l + n + q + \sum{\gamma_i\psi_i} = 2c$ implies $\psi_i = 0$ for all $i$. 
 
We are left with the following two equations:
\vspace{-12pt}
\begin{singlespace}
\begin{align}
c^2 + g^2 + h^2 + (j^2 + l^2 + n^2  + q^2) + \sum{\gamma_i^2} &= 4 + 4c^2 \label{squares} \\
c^2 + 2gh + jk + lm + np + qr + \sum{\gamma_i\gamma_i^*} &= 4c^2 \label{dubproducts} \\
\intertext{Because of duality constraints, the set $\{j, l, n, q\}$ is a permutation of the set $\{k, m, p, r\}$ so, \eqref{squares} implies}
c^2 + g^2 + h^2 + (k^2 + m^2 + p^2  + r^2) + \sum{\gamma_i^2} &= 4 + 4c^2. \label{squarefriends}
\end{align}
\end{singlespace}
\noindent
We add \eqref{squares} with \eqref{squarefriends} and twice \eqref{dubproducts} to get
\vspace{-12pt}
\begin{singlespace}
\[4c^2 + 2(g + h)^2 + (j + k)^2 + (l + m)^2 + (n + p)^2 + (q + r)^2 + 2 \cdot \sum{\gamma_i^2} + 2\cdot\sum{\gamma_i\gamma_i^*} = 8 + 16c^2 \]
\end{singlespace}
\noindent
whence,
\vspace{-12pt}
\begin{singlespace}
\begin{align*}
2 \cdot \sum{\gamma_i(\gamma_i + \gamma_i^*)} &= 8 + 10c^2 - \left((j + k)^2 + (l + m)^2 + (n + p)^2 + (q + r)^2\right) \\
&\leq 8 + 10c^2 - \frac{1}{4}(j+k+l+m+n+p+q+r)^2 \\
&= 8 + 10c^2 - 4c^2 \\
\Rightarrow  \sum{\gamma_i(\gamma_i + \gamma_i^*)} &\leq 4 + 3c^2 &&\qedhere
\end{align*}
\end{singlespace}
\noindent
\end{proof}

\vspace{-.3in}
\subsection{Twists} Using the same methods as in section 4, we calculate the twists for some of the simple objects $\mathcal{Z}(\mathcal{C})$.

\begin{prop} The twists $\theta_A = \theta_B = \theta_C = 1$ \end{prop}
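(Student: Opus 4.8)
The plan is to follow the proof of Proposition~\ref{thetaABC} essentially verbatim, now using the data for $\mathcal{Z}(\mathcal{C})$ recorded in Proposition~\ref{so1}. First I would recall two standard facts about the modular tensor category $\mathcal{Z}(\mathcal{C})$: the twist of any simple object is a root of unity (Vafa's theorem), so $|\theta_O| = 1$ for every simple $O$ and $\theta_{\mathbf{1}} = 1$; and, since $\mathcal{C}$ is pseudo-unitary, so is $\mathcal{Z}(\mathcal{C})$, so that categorical dimensions coincide with Frobenius--Perron dimensions and in particular $\dim(A), \dim(B), \dim(C)$ are positive real numbers.

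Next I would apply Theorem~2.5 of \cite{ostrik} to the object $I(\mathbf{1}) = \mathbf{1} \op A \op B \op C$. Together with the dimensions from Proposition~\ref{so1} this gives
\[
4 + 4cd = \dim(\mathcal{C}) = \tr(\theta_{I(\mathbf{1})}) = 1 + (1 + 2cd)\theta_A + (1 + cd)\theta_B + (1 + cd)\theta_C .
\]
On the other hand, $\dim(\mathcal{C}) = \dim(\mathbf{1}) + \dim(A) + \dim(B) + \dim(C)$ reads $4 + 4cd = 1 + (1 + 2cd) + (1 + cd) + (1 + cd)$. Subtracting, $(1 + 2cd)(1 - \theta_A) + (1 + cd)(1 - \theta_B) + (1 + cd)(1 - \theta_C) = 0$; taking real parts, each summand has nonnegative real part (since $|\theta| = 1$ forces $\operatorname{Re}(1-\theta)\ge 0$) and the coefficients are positive, so each term vanishes, which forces $\theta_A = \theta_B = \theta_C = 1$.

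I expect essentially no obstacle here: once the two background facts are in place, the argument is a one-line triangle-inequality (or real-part) computation identical in spirit to Proposition~\ref{thetaABC}. The only points worth stating carefully are the appeal to Vafa's theorem for $|\theta_O| = 1$ and the use of pseudo-unitarity of $\mathcal{Z}(\mathcal{C})$ to guarantee that the dimensions appearing are positive reals; both are already implicitly used in the analogous step for $K_1(e)$.
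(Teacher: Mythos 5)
Your argument matches the paper's proof: apply Theorem~2.5 of \cite{ostrik} to $I(\mathbf{1})$, plug in the dimensions from Proposition~\ref{so1}, and conclude from positivity of the dimensions and $|\theta|=1$. The paper states only the chain of equalities and leaves the triangle-inequality step implicit, which you have spelled out; otherwise the two proofs coincide.
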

\begin{proof}
By Theorem 2.5 of \cite{ostrik}, 
\vspace{-12pt}
\begin{singlespace}
\begin{align*}
4 + 4cd &= \dim(\mathcal{C}) = \tr(\theta_{I(\mathbf{1})}) \\
&= 1 + \dim(A)\theta_A + \dim(B)\theta_B + \dim(C)\theta_C \\
&= 1 + (1 + 2cd)\theta_A + (1 + cd)\theta_B + (1 + cd)\theta_C && \qedhere
\end{align*}
\end{singlespace}
\noindent
\end{proof}
Let $\theta = \theta_D$.
\begin{prop} \label{thetacases}
Without loss of generality, the balance isomorphisms $\theta_D, \theta_E, \theta_G, \theta_H$ satisfy one of the following
\begin{enumerate}
\item $\theta = \theta_D = \theta_E = -\theta_G = -\theta_H = 1$
\item $\theta = \theta_D = \theta_E = -\theta_G = -\theta_H = i$
\end{enumerate}
\end{prop}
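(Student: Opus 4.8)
The plan is to read off two trace identities for the balance isomorphism on $I(Y)$ and combine them with an extremality (equality-in-the-triangle-inequality) argument. Since $Y$ is self-dual and $Y\neq\mathbf{1}$, Theorem~2.5 of \cite{ostrik} gives $\tr(\theta_{I(Y)})=0$, and \cite{NgS} (which says $\tr(\theta_{I(Y)}^2)=\pm\dim(\mathcal{C})$ for a self-dual simple $Y$) gives the second identity. Written out over the decomposition $I(Y)=D\oplus E\oplus G\oplus H$ and using that each $\dim(O)=1+m_O d$ is a positive real (with $m_O\in\zz_{\ge 0}$ and $\sum_O\dim(O)=4+4cd=\dim(\mathcal{C})$, as follows from Proposition~\ref{so1}), these read
\[
\sum_O\dim(O)\theta_O=0,\qquad \sum_O\dim(O)\theta_O^2=\pm\dim(\mathcal{C}).
\]

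First I would extract information from the second identity alone. Its left-hand side is a sum of four nonzero complex numbers $\dim(O)\theta_O^2$ whose moduli add up to $\sum_O\dim(O)=\dim(\mathcal{C})$, which is exactly the modulus of the right-hand side; so the triangle inequality must be an equality, forcing all four summands to point in the same direction. Hence $\theta_D^2=\theta_E^2=\theta_G^2=\theta_H^2=:\mu$, and substituting back, $\mu\dim(\mathcal{C})=\pm\dim(\mathcal{C})$, so $\mu=\pm1$. Thus every $\theta_O$ lies in $\{1,-1\}$ (if $\mu=1$) or in $\{i,-i\}$ (if $\mu=-1$); writing $\theta$ for the corresponding square root ($1$ or $i$), we have $\theta_O=\epsilon_O\theta$ with $\epsilon_O\in\{\pm1\}$.

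Next I would feed this back into the first identity: it becomes $\theta\sum_O\epsilon_O\dim(O)=0$, so $\sum_O\epsilon_O\dim(O)=0$, and since $1$ and $d$ are linearly independent over $\qq$ this yields $\sum_O\epsilon_O=0$. The $\epsilon_O$ cannot all coincide (else $\sum_O\dim(O)=\dim(\mathcal{C})\neq0$), so exactly two of them equal $+1$ and two equal $-1$. Relabeling $D,E,G,H$ so that $\epsilon_D=\epsilon_E=+1$ and $\epsilon_G=\epsilon_H=-1$ gives $\theta=\theta_D=\theta_E=-\theta_G=-\theta_H$; finally, replacing $\theta$ by $-\theta$ and swapping the two pairs when $\theta\in\{-1,-i\}$, we may take $\theta\in\{1,i\}$, which is precisely cases (1) and (2).

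The argument is short enough that there is no single hard step, but the point that needs care is the tightness of the triangle-inequality bound in the second identity: this rests on $\dim(I(Y))=\dim(\mathcal{C})$ (equivalently $\dim(Y)=1$) together with the exact normalization $\tr(\theta_{I(Y)}^2)=\pm\dim(\mathcal{C})$ from \cite{NgS}. Everything else is routine bookkeeping with the dimensions already found in Proposition~\ref{so1} and the $\zz$-linear independence of $1$ and $d$; in particular the duality closure of $\{D,E,G,H\}$ is not needed for this step.
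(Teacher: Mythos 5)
Your proof is correct and follows essentially the same route as the paper: use $\tr(\theta_{I(Y)}^2)=\pm\dim(\mathcal{C})$ (which the paper invokes as Theorem 2.7 of \cite{ostrik}, citing \cite{NgS} as the source) together with $\sum_O\dim(O)=\dim(\mathcal{C})$ to force all $\theta_O^2$ equal to $\pm1$, then feed the resulting sign pattern into $\tr(\theta_{I(Y)})=0$ and use irrationality of $d$ to split the twists evenly. Your spelled-out triangle-inequality extremality step is exactly the reasoning the paper leaves terse in the phrase ``since $j+k+l+m+n+p+q+r=4c$ we have $\theta_D^2=\cdots=\pm1$.''
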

\begin{proof}
By Theorem 2.7 of \cite{ostrik}, 
\vspace{-12pt}
\begin{singlespace}
\begin{align*}
\pm (4 + 4cd) &= \pm \dim(\mathcal{C}) = \tr(\theta_{I(Y)}^2) \\
&= \dim(D)\theta_D^2 + \dim(E)\theta_E^2 + \dim(G)\theta_G^2 + \dim(H)\theta_H^2 \\
&= (1 + (j+k)d)\theta_D^2 + (1 + (l+m)d)\theta_E^2 + (1 + (n+p)d)\theta_G^2 + (1 + (q+r)d)\theta_H^2
\end{align*}
\end{singlespace}
\noindent
Then, since $j+k+l+m+n+p+q+r = 4c$ we have $\theta_D^2 = \theta_E^2 = \theta_G^2 = \theta_H^2 = \pm 1$.

Next, Theorem 2.5 of \cite{ostrik} gives 
\vspace{-12pt}
\begin{equation} \label{tY}
0 = \tr(\theta_{I(Y)}) = (1 + (j+k)d)\theta_D + (1 + (l+m)d)\theta_E + (1 + (n+p)d)\theta_G + (1 + (q+r)d)\theta_H.
\end{equation}
\noindent
 Since the twists are fourth roots of unity and $d$ is real and irrational, we have
$\theta_D + \theta_E + \theta_G + \theta_H = 0$, which implies, without loss of generality, $\theta_D = \theta_E = -\theta_G = -\theta_H$.
\end{proof}

\subsection{Categorifications of $K_2(c)$} Let $\theta_i = \theta_{L_i}$. Using the results in \cite{NgS} and calculating the traces of $\theta_{I(X)}$ and $\theta_{I(X)}^2$, we arrive at expressions for linear combinations of the $\theta_i$ and $\theta_i^2$.  Using Theorem 2.5 of \cite{ostrik}, we compute
\vspace{-12pt}
\begin{singlespace}
\begin{align*}
0 &= \tr(\theta_{I(X)}) \\
&= c\dim(A)\theta_A + g\dim(B)\theta_B + h\dim(C)\theta_C + j\dim(D)\theta_D + l\dim(E)\theta_E \\
&\quad + n\dim(G)\theta_G + q\dim(H)\theta_H + \sum{\gamma_i\dim(L_i)\theta_i} \\
&= c(1+2cd) + g(1+cd) + h(1+cd) + j(1 + (j+k))d)\theta + l(1 + (l+m)d)\theta \\
&\quad - n(1 + (n+p)d)\theta - q(1 + (q+r)d)\theta + \sum{\gamma_i(\gamma_i + \gamma_i^*)d\theta_i} \\
&= 2c + 3c^2d + \theta(j + l - n - q) + \theta d(j^2 + jk + l^2 + lm - n^2 - np - q^2 - qr) \\
&\quad + \sum{\gamma_i(\gamma_i + \gamma_i^*)d\theta_i},
\end{align*}
\end{singlespace}
\noindent
whence,
\begin{equation}
\sum{\gamma_i(\gamma_i + \gamma_i^*)\theta_i} = -\frac{2c}{d} - 3c^2 - \frac{\theta}{d}(j + l - n - q) - \theta (j^2 + jk + l^2 + lm - n^2 - np - q^2 - qr). \label{sumofthetas}
\end{equation}
Also by \cite{NgS},
\vspace{-12pt}
\begin{singlespace}
\begin{align*}
0 &= \tr(\theta_{I(X)}^2) \\
&= c\dim(A)\theta_A^2 + g\dim(B)\theta_B^2 + h\dim(C)\theta_C^2 + j\dim(D)\theta_D^2 + l\dim(E)\theta_E^2 + n\dim(G)\theta_G^2 \\
&\quad + q\dim(H)\theta_H^2 + \sum{\gamma_i\dim(L_i)\theta_i^2} \\
&= 2c + 3c^2d + \theta^2(j + l + n + q) + \theta^2 d(j^2 + jk + l^2 + lm + n^2 + np + q^2 + qr) \\
&\quad + \sum{\gamma_i(\gamma_i + \gamma_i^*)d\theta_i^2} \\
&= 2c + 3c^2d + 2c\theta^2 + \theta^2 d(j^2 + jk + l^2 + lm + n^2 + np + q^2 + qr) + \sum{\gamma_i(\gamma_i + \gamma_i^*)d\theta_i^2},
\end{align*}
\end{singlespace}
\noindent
whence
\begin{equation}
\sum{\gamma_i(\gamma_i + \gamma_i^*)\theta_i^2} = -\frac{2c}{d} - 3c^2 - \frac{2c}{d}\theta^2 - \theta^2 (j^2 + jk + l^2 + lm + n^2 + np + q^2 + qr). \label{sumofthetas^2}
\end{equation}

Equations \ref{sumofthetas} and \ref{sumofthetas^2} show that a sum of a limited number of roots of unity and the sum of their squares are equal to some quadratic irrationalities. We will be able to get a contradiction when $c > 2$ using the following results from section \ref{rootssection}:

\begin{prop} \label{sqrt2}
For integers $a, b$, it takes at least $|a| + 2|b|$ roots of unity to write $a + b\sqrt{2}$ as a sum of roots of unity.
\end{prop}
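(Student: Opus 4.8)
The upper bound is attained, since $\sqrt2=\zeta_8+\zeta_8^{-1}$ lets one write $a+b\sqrt2$ as $|a|$ copies of $\pm1$ together with $|b|$ copies of $\pm(\zeta_8+\zeta_8^{-1})$, for a total of $|a|+2|b|$ roots of unity; so only the lower bound needs proof. The plan is to reduce to the case in which every $\theta_i$ is an $8$th root of unity, and then to finish by a coordinate computation in $\qq(\zeta_8)$.

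First I would dispose of two easy cases. If $b=0$ the claim is just the triangle inequality $|a|=|\sum_i\theta_i|\le\sum_i|\theta_i|=n$. So assume $b\neq0$, and let $N$ be the order of the group of roots of unity generated by the $\theta_i$, so that $\sqrt2=b^{-1}(\sum_i\theta_i-a)\in\qq(\zeta_N)$; since the conductor of $\qq(\sqrt2)$ is $8$ (for instance $\qq(\zeta_8)=\qq(i,\sqrt2)$), this forces $8\mid N$. I would then induct on $N$. For the base case $N=8$, I would work in the $\qq$-basis $1,\zeta_8,\zeta_8^2,\zeta_8^3$ of $\qq(\zeta_8)$: using $\zeta_8^{j+4}=-\zeta_8^j$ and $\sqrt2=\zeta_8-\zeta_8^3$, the element $a+b\sqrt2$ has coordinate vector $(a,b,0,-b)$, while $\sum_i\theta_i$ has coordinate vector $(n_0-n_4,\ n_1-n_5,\ n_2-n_6,\ n_3-n_7)$, where $n_j$ is the number of indices $i$ with $\theta_i=\zeta_8^j$. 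Comparing, and using $n=\sum_{j=0}^7 n_j=\sum_{j=0}^3(n_j+n_{j+4})\ge\sum_{j=0}^3|n_j-n_{j+4}|$, gives $n\ge|a|+|b|+0+|b|=|a|+2|b|$.

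For the inductive step with $N>8$, I would choose a prime $q\mid N$ such that $8\mid N/q$ still holds: take $q$ to be any odd prime factor of $N$ if one exists, and otherwise $N=2^k$ with $k\ge4$ and $q=2$ works. Let $\pi$ be a primitive root of unity of order equal to the largest power of $q$ dividing $N$; then the powers $\pi^0,\dots,\pi^{q-1}$ span $\qq(\zeta_N)$ over $\qq(\zeta_{N/q})$, with the single relation $\pi^0+\dots+\pi^{q-1}=0$ when $q$ divides $N$ exactly once and with no relation otherwise. Grouping the $\theta_i$ by the residue modulo $q$ of the exponent of their $q$-part yields $\sum_i\theta_i=\sum_{j=0}^{q-1}\pi^j S_j$, where each $S_j$ is a sum of $n_j$ many $(N/q)$-th roots of unity and $\sum_{j=0}^{q-1}n_j=n$. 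Since $a+b\sqrt2\in\qq(\zeta_{N/q})$ (here the condition $8\mid N/q$ is used), equating coefficients against this spanning set shows that $a+b\sqrt2$ is itself a sum of at most $n$ many $(N/q)$-th roots of unity: when $q^2\mid N$ one gets $S_0=a+b\sqrt2$ and $S_j=0$ for $j\ge1$, and when $q$ divides $N$ exactly once one gets $S_1=\dots=S_{q-1}$ and $S_0-S_1=a+b\sqrt2$, so $a+b\sqrt2=S_0+(-S_1)$ is a sum of $n_0+n_1\le n$ roots of unity. In either case the inductive hypothesis applied to this shorter representation gives $n\ge|a|+2|b|$, completing the induction.

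The routine portions are the $b=0$ case and the $\qq(\zeta_8)$ coordinate count. The one delicate point, and really the crux, is the reduction step: one must correctly identify a spanning set of $\qq(\zeta_N)$ over $\qq(\zeta_{N/q})$ by powers of a prime-power root of unity and keep careful track of the single linear relation $\pi^0+\dots+\pi^{q-1}=0$, which is present exactly when $q$ divides $N$ to the first power and is the source of the two sub-cases. This is the same mechanism that underlies the more general sum-of-roots-of-unity bounds quoted in the introduction and in Proposition~\ref{rootsprop}, here specialized and sharpened to $c=2$.
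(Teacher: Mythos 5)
Your proof is correct, but it takes a genuinely different route from the paper's. The paper first applies the Galois-averaging Lemma~\ref{rtslm}: summing over the Galois group $\gal(\qq(\zeta_x)/\qq(\sqrt{2}))$ produces a representation of $M(a+b\sqrt2)$ by $8$th roots of unity whose multiplicities are Galois-invariant, i.e.\ a sum of Galois orbits. It then tabulates the five orbit sums ($1$, $-1$, $0$, $\pm\sqrt2$) and feeds them into the linear functional $f(x+y\sqrt2)=x+2y$, observing that $f$ applied to each orbit sum is at least $-|O_i|$; summing these inequalities over orbits and evaluating $f$ on the total yields the bound. This is a linear-programming-duality style argument, and the key reduction lemma is shared with the proofs of Theorem~\ref{a+bsqrtc} and Proposition~\ref{rootsprop}, so the paper gets considerable mileage from it.

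Your argument replaces both the Galois averaging and the functional/orbit bookkeeping with a strong induction on $N$, the order of the group generated by the $\theta_i$, stripping off one prime $q$ at a time while keeping $8\mid N/q$. The degree computation $[\qq(\zeta_N):\qq(\zeta_{N/q})]=q$ or $q-1$, together with the single relation $1+\pi+\cdots+\pi^{q-1}=0$ in the ramified ($q\|N$) case, lets you collapse the representation to $(N/q)$-th roots of unity without increasing the count; the base case $N=8$ is a clean coordinate count in the $\qq$-basis $1,\zeta_8,\zeta_8^2,\zeta_8^3$, where $a+b\sqrt2$ has coordinates $(a,b,0,-b)$ and the triangle inequality on each coordinate gives exactly $|a|+2|b|$. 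The two delicate points you flagged — the spanning relation over $\qq(\zeta_{N/q})$ and the sign change turning $S_0-S_1$ into a sum of $n_0+n_1$ genuine roots of unity (valid since $N/q$ is even) — are both handled correctly. Your approach is more elementary and entirely self-contained for $c=2$, at the cost of not generalizing as cleanly as the paper's averaging lemma does to arbitrary square-free $c$; the paper pays for that generality by needing a separate table of orbit sums and a hand-chosen linear functional for each case.
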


\begin{thm} \label{a+bsqrtc}
For $d$ square-free it requires at least $|b|\phi(2d)$ roots of unity to write $a + b\sqrt{d}$ as a sum of roots of unity.
\end{thm}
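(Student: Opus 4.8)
The statement to prove is Theorem~\ref{a+bsqrtc}: for squarefree $d$, writing $a + b\sqrt d$ as a sum of roots of unity requires at least $|b|\phi(2d)$ roots of unity. (This is the same assertion as the Theorem labelled with $c$ in the introduction, with $c$ renamed $d$.) The natural approach is to pass to a cyclotomic field large enough to contain all the roots of unity involved, and to use the action of the Galois group on the expression $a + b\sqrt d$. Concretely: suppose $a + b\sqrt d = \sum_{i=1}^N \theta_i$ with each $\theta_i$ a root of unity, and let $n$ be such that all $\theta_i \in \mathbb{Q}(\zeta_n)$. Then $\sqrt d \in \mathbb{Q}(\zeta_n)$, which forces the squarefree kernel of the conductor of $\mathbb{Q}(\sqrt d)$ to divide $n$; since $d$ is squarefree, $\mathbb{Q}(\sqrt d)$ has conductor $|d|$ if $d \equiv 1 \pmod 4$ and $4|d|$ otherwise, so in all cases $2d \mid n$ (here using that $d$ squarefree and $d \not\equiv 1 \bmod 4$ forces the conductor $4|d|$; when $d\equiv 1$ one still has $\mathbb Q(\sqrt d)\subseteq \mathbb Q(\zeta_{4d})$, and in fact one can argue $\mathbb{Q}(\sqrt{d}) \subseteq \mathbb{Q}(\zeta_n) \Rightarrow d \mid n$ or $4d\mid n$, and either way $\phi(2d)\mid \phi(n)$ — I would verify the precise divisibility statement I need, namely that $\phi(2d) \mid [\mathbb{Q}(\zeta_n):\mathbb{Q}(\sqrt d)]\cdot 2$ is not quite it; rather I want a subgroup of $\mathrm{Gal}(\mathbb{Q}(\zeta_n)/\mathbb{Q})$ of size $\phi(2d)$ fixing $\mathbb{Q}(\sqrt d)$ pointwise... see below).

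**The key mechanism.** The subgroup $H = \mathrm{Gal}(\mathbb{Q}(\zeta_n)/\mathbb{Q}(\sqrt d)) \leq G = \mathrm{Gal}(\mathbb{Q}(\zeta_n)/\mathbb{Q})$ has index $2$ when $\sqrt d \notin \mathbb{Q}$, so $|H| = \phi(n)/2$. Every $\sigma \in G$ sends each $\theta_i$ to another root of unity of the same order, and since complex absolute value of any root of unity is $1$, $|\sigma(\theta_i)| = 1$. Apply an arbitrary $\sigma\in H$: then $\sigma$ fixes $a + b\sqrt d$, so $\sum_i \sigma(\theta_i) = a + b\sqrt d$. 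Now sum over a suitable set of coset representatives or over a cleverly chosen subgroup. The standard trick: $a + b\sqrt d = \tfrac12\big((a+b\sqrt d) + (a+b\sqrt d)\big)$ isn't enough; instead one wants to produce, by averaging $\sum_i \theta_i$ over translates, a large multiple of $b\sqrt d$ (or a related algebraic integer) with small archimedean size, forcing $N$ to be large. The cleanest route: consider $S = \sum_i \theta_i$ and apply each element of the coset $gH$ where $g$ is complex conjugation composed appropriately, or directly estimate using that $\sum_{\sigma\in H}\sigma(\theta_i)$ is a rational integer (being $H$-invariant) bounded in absolute value by $|H|$ for each conjugate of $\theta_i$. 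I expect the argument to run: $\sum_{\sigma \in H} \sigma(S) = |H|(a + b\sqrt d)$ on one hand; but the left side is $\sum_i \sum_{\sigma\in H}\sigma(\theta_i)$, and each inner sum $\sum_{\sigma\in H}\sigma(\theta_i)$ is a trace-like quantity lying in $\mathbb{Q}(\sqrt d)$; writing $\sum_{\sigma\in H}\sigma(\theta_i) = u_i + v_i\sqrt d$ with $u_i,v_i\in\tfrac12\mathbb Z$, the conjugate under the nontrivial element of $G/H$ gives $u_i - v_i\sqrt d$, and $|u_i + v_i\sqrt d| + |u_i - v_i\sqrt d| \leq 2|H|$ forces $|v_i|\sqrt d \leq |H|$. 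Then $|b| |H| = |\sum_i v_i| \leq \sum_i |v_i| \leq N |H|/\sqrt d$... which gives $N \geq |b|\sqrt d$, not quite $|b|\phi(2d)$. So I need the sharper input: $\sum_{\sigma\in H}\sigma(\theta_i)$ is actually either $0$ or has $|v_i|$ bounded below by something, OR — more likely the intended argument — one should instead use that if $\theta_i$ has order $m$, then $\sum_{\sigma\in H}\sigma(\theta_i) = 0$ unless the cyclic group generated by $\theta_i$ is fixed by $H$, i.e. unless $\theta_i \in \mathbb{Q}(\sqrt d)$; and the only roots of unity in a real quadratic field $\mathbb{Q}(\sqrt d)$ are $\pm 1$. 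Hence $\sum_{\sigma\in H}\sigma(\theta_i)=0$ except when $\theta_i = \pm 1$. That collapses the sum to just the $\pm 1$ terms, giving $b = 0$ — so actually the correct averaging must be over a group $H'$ fixing $\mathbb Q$ but not $\mathbb{Q}(\sqrt d)$, of order $\phi(n)/\phi(2d)\cdot(\text{something})$; the point is to kill exactly the roots of unity whose order is divisible by too large a part of $n$, leaving a sum supported on $\mu_{2d}$, and then observe that writing $b\sqrt d$ as a $\mathbb Z$-combination of the $\phi(2d)$ primitive $2d$-th roots (plus lower level) needs the $L^1$-norm of the coefficient vector to be at least $|b|\phi(2d)$ since $\sqrt d$ appears with coefficient pattern forcing it.

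**The real heart of the matter** is therefore a statement purely about $\mathbb{Q}(\zeta_{2d})$: if $\sqrt d = \sum c_j \zeta_{2d}^{j}$ (sum over representatives, $c_j\in\mathbb Q$ arising after averaging, ultimately in $\mathbb Z$ after clearing level), then $\sum|c_j| \geq \phi(2d)$; equivalently, in the Gauss-sum expression $\sqrt d = \pm\sum_{k}\left(\tfrac{k}{d}\right)\zeta_{d}^k$ (or with a $\zeta_{2d}$ twist when $d \not\equiv 1 \bmod 4$) every primitive root appears with coefficient $\pm 1$, so the minimal-length representation has exactly $\phi(2d)$ terms, and any other representation is longer because the $\mathbb Z[\zeta_{2d}]$-coefficient vector of $\sqrt d$ in the power basis is essentially unique up to the single relation $\sum_{j}\zeta = \mu(\cdot)$. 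The clean way to lower-bound: apply the $\phi(2d)$ Galois automorphisms of $\mathbb{Q}(\zeta_{2d})/\mathbb{Q}$ to $S \bmod (\text{contributions from }\mu_m, m\nmid 2d)$, sum, and use that complex conjugation sends $\sqrt d \mapsto -\sqrt d$ (as $d\equiv 2,3\bmod 4$ makes $\mathbb{Q}(\sqrt d)$ not contained in any real cyclotomic field) — giving a multiple of $\sqrt d$ on one side and, on the other, a sum of $N\cdot\phi(2d)$ roots of unity, balanced against the archimedean bound $|b|\phi(2d)\sqrt d$. I anticipate the main obstacle is handling the roots of unity $\theta_i$ whose order $m$ satisfies $m \mid 2d$ but whose contribution after Galois-averaging does not vanish: these need a separate bookkeeping, because the naive "orthogonality kills everything" argument is too strong and the "averaging over $H$" argument is too weak. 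I would resolve it by the standard two-step reduction — first reduce mod the primes $p\nmid 2d$ dividing $n$ one at a time (each step either leaves $N$ unchanged or decreases it, while replacing $\zeta_n$ by $\zeta_{n/p}$ and keeping $a+b\sqrt d$ fixed, as in Conway–Jones / Mann's theorem on vanishing sums of roots of unity), until reaching $n = 2d$ (or $n\mid 2d$), and then finish with the explicit Gauss-sum/coefficient-uniqueness argument in $\mathbb Z[\zeta_{2d}]$, where the $L^1$ lower bound $|b|\phi(2d)$ is immediate because $\sqrt d$ has a unique expression in the integral basis and complex conjugation negates it while fixing the count $N$.
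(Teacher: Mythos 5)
Your roadmap --- pass to a cyclotomic field, use the Galois group to reduce to roots of unity of small conductor, then exploit the Gauss--sum expression for $\sqrt{d}$ --- matches the paper's strategy (its Lemma~\ref{rtslm} followed by the orbit bookkeeping in the proof of Theorem~\ref{a+bsqrtc}). But the proposal does not actually carry out either step, and one of the claims it leans on at the end is false. Complex conjugation does \emph{not} send $\sqrt{d}\mapsto -\sqrt{d}$ when $d>0$: $\sqrt{d}$ is a positive real number, so conjugation fixes it, and $\qq(\sqrt{d})$ is contained in the real cyclotomic field $\qq(\zeta_{4d})^{+}$. (Your parenthetical justification --- that $d\equiv 2,3\pmod 4$ forces $\qq(\sqrt d)$ to lie in no real cyclotomic field --- is simply incorrect; you may be thinking of $\sqrt{-d}$.) Since the concluding paragraph of the proposal invokes that sign change to finish, the argument as written does not close.

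The two genuinely hard steps are both left as black boxes. First, the ``strip off primes $p\nmid 2d$ one at a time, Conway--Jones/Mann style'' reduction: those theorems concern \emph{vanishing} sums of roots of unity, and adapting them to a sum equal to $a+b\sqrt d$ while preserving (or controlling) the count $N$ is exactly the content the paper must prove in Lemma~\ref{rtslm}; that lemma is a multi-case Galois-orbit analysis (and it produces a multiple $M(a+b\sqrt c)$ from $MN$ roots of unity rather than preserving $N$ directly), not something one can wave at. Second, the final $L^{1}$ bound in $\zz[\zeta_{2d}]$ is not ``immediate from uniqueness in the integral basis'': the roots of unity of order dividing $2d$ (or $4d$) are not $\zz$-linearly independent, so a sum of them is not uniquely written, and the student's own earlier observation that the naive averaging yields only $N\geq |b|\sqrt d$ shows that a crude archimedean bound is insufficient. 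What the paper actually does at this stage is an LP-duality style argument: it tabulates the Galois orbit sums (Gauss sums) explicitly and observes that the coefficient of $\sqrt{c}$ contributed by any orbit, divided by the orbit's size, never exceeds $1/\phi(2c)$, which pins down the sharp constant. Your mid-proof claim that ``$\sum_{\sigma\in H}\sigma(\theta_i)=0$ unless $\theta_i=\pm1$'' is also false (e.g.\ the $H$-orbit of a primitive $12$th root of unity sums to $\pm\sqrt 3$ when $d=3$), though you correctly flag the absurd consequence and back away from it. In short: correct overall plan, but both load-bearing steps are asserted rather than proved, and the final step rests on a wrong statement about complex conjugation.
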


\begin{thm} \label{fam2}
If the based ring $K_2(c)$ is categorifiable, then $c \leq 2$.
\end{thm}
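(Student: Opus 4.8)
The plan is to run the same machine as in the preceding subsection. Equations \eqref{sumofthetas} and \eqref{sumofthetas^2} exhibit the weighted sums of twists $\sum_i\gamma_i(\gamma_i+\gamma_i^*)\theta_i$ and $\sum_i\gamma_i(\gamma_i+\gamma_i^*)\theta_i^2$ as quadratic irrationalities in $d=c+\sqrt{c^2+1}$; Proposition \ref{so2}(3) bounds the number of roots of unity occurring on the left by $\sum_i\gamma_i(\gamma_i+\gamma_i^*)\le 4+3c^2$; and Proposition \ref{sqrt2} together with Theorem \ref{a+bsqrtc} bounds it below in terms of $c$. We may assume $c\ge 1$, since $K_2(0)$ is already known to be categorifiable. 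The first step is to record that $1/d=\sqrt{c^2+1}-c$ and to split into the two cases of Proposition \ref{thetacases}, namely $\theta:=\theta_D$ with $\theta^2=1$ or $\theta^2=-1$.

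If $\theta^2=1$, substituting into \eqref{sumofthetas^2} the $1/d$-terms combine with $-3c^2$ to give
\[
\sum_i\gamma_i(\gamma_i+\gamma_i^*)\theta_i^2 \;=\; \bigl(c^2-\sigma\bigr)-4c\sqrt{c^2+1},
\]
where $\sigma=j^2+jk+l^2+lm+n^2+np+q^2+qr\in\zz$, and the left side is a sum of $N:=\sum_i\gamma_i(\gamma_i+\gamma_i^*)\le 4+3c^2$ roots of unity. If $\theta^2=-1$, the same substitution into \eqref{sumofthetas^2} destroys the irrational part, so instead we use \eqref{sumofthetas} with $\theta=\pm i$; its right side is genuinely complex, so adding the complex conjugate yields
\[
\sum_i\gamma_i(\gamma_i+\gamma_i^*)\bigl(\theta_i+\bar\theta_i\bigr)\;=\;-2c^2-4c\sqrt{c^2+1},
\]
a sum of at most $2N\le 8+6c^2$ roots of unity. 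In either case the coefficient of $\sqrt{c^2+1}$ equals $-4c\ne 0$.

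Now write $c^2+1=u^2v$ with $v$ squarefree, so the coefficient of $\sqrt v$ is $-4cu$ and $u\sqrt v=\sqrt{c^2+1}$. Two elementary remarks: $v\ne 1$, since $c^2+1$ is not a perfect square for $c\ge 1$; and $3\nmid v$, since $-1$ is not a quadratic residue modulo $3$. Hence either $v\ge 5$, or $v=2$, the latter forcing $c^2-2u^2=-1$ and hence $c\in\{1,7,41,\dots\}$. If $v\ge 5$, Theorem \ref{a+bsqrtc} gives $N$ (respectively $2N$) $\ge 4cu\,\phi(2v)=4c\sqrt{c^2+1}\cdot\phi(2v)/\sqrt v$; a short lemma in the spirit of Lemma \ref{philm} shows $\phi(2v)/\sqrt v\ge\phi(10)/\sqrt 5=4/\sqrt 5$ for every squarefree $v\ge 5$ with $3\nmid v$, so this lower bound exceeds $\tfrac{16}{\sqrt 5}c\sqrt{c^2+1}>\tfrac{16}{\sqrt 5}c^2$, and comparing with the upper bounds $4+3c^2$ and $8+6c^2$ forces $c\le 2$. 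If $v=2$, we use that Proposition \ref{sqrt2} also counts the rational part: $N$ (respectively $2N$) $\ge |a|+8cu$, which in the $\theta^2=-1$ case is $2c^2+8cu$; since $u>c/\sqrt2$, comparing again with $4+3c^2$ and $8+6c^2$ eliminates every $c\ge 3$ (the smallest with $v=2$ being $c=7$), while $c=1$ is permitted.

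The delicate point is that the estimates are essentially sharp: in the case $\theta^2=-1$ with $c=2$ (so $v=5$) the inequality $2N\ge 4cu\,\phi(2v)$ reads $2N\ge 32$ against $2N\le 32$, so it is an equality, and any looser version would already fail to rule out $c=3$. One must therefore keep the explicit constant $\phi(10)/\sqrt5$ rather than a crude bound such as $\phi(2v)\ge\sqrt{2v}$, and one must dispatch $v=2$ via Proposition \ref{sqrt2} rather than Theorem \ref{a+bsqrtc}, since $\phi(4)/\sqrt2=\sqrt2<\tfrac32$ is too small for the $\phi$-bound alone to beat the $8+6c^2$ estimate. Verifying the auxiliary $\phi$-function lemma is routine, reducing (as in Lemma \ref{philm}) to monotonicity of $(p-1)/\sqrt p$ together with a handful of small cases.
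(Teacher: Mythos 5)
Your proof is correct and runs the same machine as the paper: use equations \eqref{sumofthetas} and \eqref{sumofthetas^2} together with Proposition \ref{so2}(3) to bound $\sum\gamma_i(\gamma_i+\gamma_i^*)$ above, use Proposition \ref{sqrt2} and Theorem \ref{a+bsqrtc} to bound the number of roots of unity on the right below, and split on $\theta^2=\pm 1$ and on whether $c^2+1$ is twice a square. The one organizational difference is that you factor $c^2+1=u^2v$ explicitly and observe $v\neq 1$ and $3\nmid v$ (since $-1$ is a non-residue mod $3$), so that $v\in\{2\}\cup\{5,7,10,11,\dots\}$. This is a small but genuine improvement in exposition: the paper's asserted inequality $\phi(2v)/\sqrt v\geq 4/\sqrt 5$ actually fails at $v=3$ and $v=6$, and the paper never says why those are excluded, whereas you point out the number-theoretic reason. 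Your remarks on the tightness at $(c,v,\theta^2)=(2,5,-1)$, and on why $v=2$ must be dispatched via Proposition \ref{sqrt2} rather than Theorem \ref{a+bsqrtc}, accurately diagnose why the constants in the paper cannot be relaxed.
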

\begin{proof}
Assume there exists a category $\mathcal{C}$ such that $K(\mathcal{C}) = K_2(c)$. We now consider the two cases outlined in Proposition \ref{thetacases}.
\medskip
\paragraph{\textbf{Case 1:} {\boldmath{$\theta = 1$}}}
Then, \eqref{sumofthetas^2} becomes
\vspace{-12pt}
\begin{singlespace}
\begin{align*}
\sum{\gamma_i(\gamma_i + \gamma_i^*)\theta_i^2} &= -\frac{4c}{d} - 3c^2 - (j^2 + jk + l^2 + lm + n^2 + np + q^2 + qr) \\
&= -4c\sqrt{c^2 + 1} + 4c^2 - 3c^2 - (j^2 + jk + l^2 + lm + n^2 + np + q^2 + qr)
\end{align*}
\end{singlespace}
\noindent
The right hand side of the above equation requires at least
\vspace{-12pt}
\begin{singlespace}
\[4c\sqrt{\frac{c^2 + 1}{2}}\cdot 2 \geq \frac{8}{\sqrt{2}}c^2\]
\end{singlespace}
\noindent
roots of unity to write, while the left hand side contains at most $\sum{\gamma_i(\gamma_i + \gamma_i^*)}$ roots of unity. Hence,
\vspace{-12pt}
\begin{singlespace}
\[4 + 3c^2 \geq \sum{\gamma_i(\gamma_i + \gamma_i^*)} \geq \frac{8}{\sqrt{2}}c^2,\]
\end{singlespace}
\noindent
which implies $c \leq 1$. Note that if $c = 1$, then we have $4c\sqrt{\frac{c^2+1}{2}} \cdot 2 = 8 > 7 = 4 + 3c^2$, so in fact, $c < 1$.
\medskip
\paragraph{\textbf{Case 2:} {\boldmath{$\theta = i$}}}
Then, \eqref{sumofthetas} becomes
\vspace{-12pt}
\begin{singlespace}
\begin{align*}
&\sum{\gamma_i(\gamma_i + \gamma_i^*)\theta_i} = -\frac{2c}{d} - 3c^2 - \frac{i}{d}(j + l - n - q) - i(j^2 + jk + l^2 + lm - n^2 - np - q^2 - qr)\\
&= -2c\sqrt{c^2 + 1} - c^2 - \frac{i}{d}(j + l - n - q) - i(j^2 + jk + l^2 + lm - n^2 - np - q^2 - qr),
\end{align*}
\end{singlespace}
\noindent
whence
\begin{equation}
\sum{\gamma_i(\gamma_i + \gamma_i^*)\theta_i} + \sum{\gamma_i(\gamma_i + \gamma_i^*)\bar{\theta_i}} = -4c\sqrt{c^2 + 1} -2c^2 \label{eq1}
\end{equation}

\medskip
\noindent
\textit{If $c^2 + 1$ is $2$ times a square}, then Proposition \ref{sqrt2} implies that right hand side of \eqref{eq1} requires at least
\vspace{-12pt}
\begin{singlespace}
\[4c\sqrt{\frac{c^2 + 1}{2}} \cdot 2 + 2c^2 \geq \left(\frac{8}{\sqrt{2}} + 2\right)c^2\]
\end{singlespace}
\noindent
roots of unity to write, while the left hand side contains at most $2 \cdot \sum{\gamma_i(\gamma_i + \gamma_i^*)}$ roots of unity. Hence, 
\vspace{-12pt}
\begin{singlespace}
\[8 + 6c^2 \geq 2 \cdot \sum{\gamma_i(\gamma_i + \gamma_i^*)} \geq \left(\frac{8}{\sqrt{2}} + 2\right)c^2,\]
\end{singlespace}
\noindent
which implies $c \leq 2$.

\medskip
\noindent
\textit{If $c^2 + 1$ is not $2$ times a square}, then Theorem \ref{a+bsqrtc} implies that the right hand side of \eqref{eq1} requires at least
\vspace{-12pt}
\begin{singlespace}
\[4c\sqrt{\frac{c^2 + 1}{5}} \cdot 4 \geq \frac{16}{\sqrt{5}}c^2\]
\end{singlespace}
\noindent
roots of unity to write, so
\vspace{-12pt}
\begin{singlespace}
\[8 + 6c^2 \geq 2 \cdot \sum{\gamma_i(\gamma_i + \gamma_i^*)} \geq \frac{16}{\sqrt{5}}c^2,\]
\end{singlespace}
\noindent
which implies $c \leq 2$.
%\medskip
%\noindent
%{\textbf{\boldmath{If $c^2 + 1$ is not $2$ times a square or $5$ times a square}}}, then Proposition \ref{a+bsqrtc} implies that the the right hand side of \eqref{eq1} reqires at least
%\vspace{-12pt}
%\begin{singlespace}
%\[4c\sqrt{\frac{c^2 + 1}{10}} \cdot 8 \geq \frac{32}{\sqrt{10}}c^2\]
%\end{singlespace}
%\noindent
%%roots of unity to write. Hence,
%%\vspace{-12pt}
%\begin{singlespace}
%\[24 + 6c^2 = 2 \cdot \sum{\gamma_i(\gamma_i + \gamma_i^*)} \geq \frac{32}{\sqrt{10}}c^2, \]
%\end{singlespace}
%\noindent
%%which implies $c \leq 2$.
\end{proof}

\section{Sums of Roots of Unity} \label{rootssection}
In the proofs of Theorems \ref{fam1} and \ref{fam2}, we needed tight bounds on the number of roots of unity required to write certain quadratic irrationalities. To develop these bounds, we will make use the following observation: suppose we have $N$ roots of unity $\theta_i$ in $\qq(\zeta_x)$ 
such that $\sum{\theta_i} = a + b\sqrt{c}$ for integers $a, b, c$. Then summing over all Galois conjugates
(by $\gal(\qq(\zeta_x)/\qq(\sqrt{c}))$)
of this equation gives a collection of $MN$ roots of unity, where $M = |\gal(\qq(\zeta_x)/\qq(\sqrt{c}))|$,
whose sum is $M(a + b\sqrt{c})$ and whose multiplicities are Galois-invariant. Hence, if it requires at least $MA$
roots of unity to write $M(a + b\sqrt{c})$ such that the multiplicities are Galois-invariant, it requires at least $A$ roots of unity to write $a + b\sqrt{c}$. This reduces the problem to a question about sums of Galois orbits.

\begin{lm} \label{rtslm}
Let $a, b, c$ be integers with $c = 2^\alpha\prod{p_i}$ the product of distinct primes. 
Define $\epsilon = 0$ if $c = 1$ (mod $4$) and $\epsilon = 1$ if $c = 2, 3$ (mod $4$). 
Assume there exist $N$ roots of unity $\theta_i$ in some cyclotomic field $\qq(\zeta_x)$ such that
\begin{equation}
\sum{\theta_i} = a + b\sqrt{c} \label{bee}
\end{equation}
Then, for some positive integer $M$, there exists a collection of less than or equal to $MN$ roots of unity $\theta_i'$ with 
\begin{equation}
\sum{\theta_i'} = M(a + b\sqrt{c})
\end{equation}
such that the multiplicity of $\theta_i'$ is $\gal(\qq(\zeta_x)/\qq(\sqrt{c}))$-invariant, and the $\theta_i'$ are all $(2^{\epsilon + 1}c)^{\text{th}}$ roots of unity.
\end{lm}
\begin{proof}
Define $n = 2^{2\epsilon}c$. Let $M = |\gal(\qq(\zeta_x)/\qq(\sqrt{c}))|$. Summing all Galois conjugates of \eqref{bee} by $\gal(\qq(\zeta_x)/\qq(\sqrt{c}))$ gives an expression in which the multiplicity of $\theta_i'$ is Galois-invariant.
Consider some Galois orbit $O$ and suppose that all elements of $O$ are primitive $Y^{\text{th}}$ roots of unity where $Y = 2^\beta \prod{p_i^{r_i}} \prod{q_i^{s_i}}$.

{\textbf{\boldmath{Case 1: $n \nmid Y$}}}
Then $\sqrt{c} \notin \qq(\zeta_Y)$, so the orbit contains all primitive $Y^{\text{th}}$ roots of unity and one of the following three things happens:
\begin{enumerate}
\item $\sum_{\zeta \in O}{\zeta} = 1$; we replace $O$ by $1$.
\item $\sum_{\zeta \in O}{\zeta} = -1$; we replace $O$ by $-1$.
\item $\sum_{\zeta \in O}{\zeta} = 0$; we drop this orbit from our sums.
\end{enumerate}
All of these replacements have size less than or equal to $|O|$ and all contain $(2^{\epsilon + 1}c)^{\text{th}}$ roots of unity.

{\textbf{\boldmath{Case 2: $n \mid Y$ and $\beta > 1 + \epsilon$ or $r_l > 1$ for some $l$.}}} Define $k = 2$ if $\beta > 1 + \epsilon$, and $k = p_l$ otherwise. Note that, $p_j \mid Y/k$ for all $j$, and $2 \mid Y/k$, so if $i$ is relatively prime to $Y$ then $Y/k + i$ is relatively prime to $Y$. Hence, for any primitive $Y^{\text{th}}$ root of unity $\zeta_Y^i$,
\vspace{-12pt}
\begin{singlespace}
\[\zeta_k \cdot \zeta_Y^i = \zeta_Y^{Y/k + i}\]
\end{singlespace}
\noindent
is also a primitive $Y^{\text{th}}$ root of unity. In addition, $n \mid Y/k$ implies that $Y/k + i  = i$ mod $n$, so $\frac{Y/k + i}{i} = 1$ mod $n$. That is,
\vspace{-12pt}
\begin{singlespace}
\[\zeta_k \cdot \zeta_Y^i = \zeta_Y^{Y/k + i} = \left(\zeta_Y^{i}\right)^{\frac{Y/k + i}{i}}\]
\end{singlespace}
\noindent
is Galois conjugate to $\zeta_Y^i$ over $\qq(\zeta_n) \supseteq \qq(\sqrt{c})$. Hence,
\vspace{-8pt}
\begin{singlespace}
\[\sum_{\zeta \in O}{\zeta} = \sum_{\zeta \in O}{\zeta_k \cdot \zeta} = \zeta_k\sum_{\zeta \in O}{\zeta}.\]
\end{singlespace}
\noindent
Since $k > 1$ this implies $\sum_{\zeta \in O} \zeta = 0$. Thus we can drop $O$ from our sums.

{\textbf{\boldmath{Case 3: $n \mid Y$ and $\beta \leq 1 + \epsilon$ and $r_l \leq 1$ for all $l$.}}} Let $u = \prod{q_i^{s_i}}$ and $v = 2^\beta\prod{p_i^{r_i}}$ so that $Y = uv$ and $(u, v) = 1$. By the Chinese Remainder Theorem, any $Y^{\text{th}}$ root of unity can be written uniquely as a $u^{\text{th}}$ root of unity times a $v^{\text{th}}$ root of unity, and $\gal(\qq(\zeta_Y)/\qq) = \gal(\qq(\zeta_u)/\qq) \oplus \gal(\qq(\zeta_v)/\qq)$. 
Any element of $\gal(\qq(\zeta_u)/\qq)$ can be seen as an element of $\gal(\qq(\zeta_Y)/\qq(\sqrt{c}))$. (If $\sqrt{c} \in \qq(\zeta_Y)$, then $n \mid Y$; since $(n, u) = 1$, it follows that $n \mid v$ so $\sqrt{c} \in \qq(\zeta_v)$.) 
Thus, there is an orbit $O'$ of primitive $v^\text{th}$ roots of unity so that $O = \{\zeta_u \zeta_v : \zeta_u \ \text{is a primitive $u^\text{th}$ root of unity and} \ \zeta_v \in O'\}$. Hence,
\vspace{-12pt}
\begin{singlespace}
\[\sum_{\zeta \in O}{\zeta} = \sum_{\zeta_v \in O'}{\zeta_v} \ \cdot \sum_{i, (i, u) = 1}{\zeta_u}\]
\end{singlespace}
\noindent
If $u$ is not squarefree then $\sum_{\zeta \in O}{\zeta} = 0$ so we can remove $O$ from our sum. If $u$ is a squarefree product of an even number of primes, $\sum_{\zeta \in O}{\zeta} = \sum_{\zeta_v \in O'}{\zeta_v}$, so we can replace $O$ by $O'$ (which is an orbit containing $2^{\epsilon + 1}c^{\text{th}}$ roots of unity because $v \mid 2^{\epsilon + 1}c$). Lastly, if $u$ is a squarefree product of an odd number primes, $\sum_{\zeta \in O}{\zeta} = -1 \cdot \sum_{\zeta_v \in O'}{\zeta_v}$, so we replace $O$ by $-1 \cdot O'$. This is a collection of LCM$(2,v)^{\text{th}}$ roots of unity, and since LCM$(2,v) \mid 2^{\epsilon + 1}c$, this leaves us with a Galois-invariant collection of $2^{\epsilon + 1}c^{\text{th}}$ roots of unity.
\end{proof}

\begin{thm}[Theorem \ref{a+bsqrtc}]
For $c$ square-free it requires at least $|b|\phi(2c)$ roots of unity to write $a + b\sqrt{c}$ as a sum of roots of unity.
\end{thm}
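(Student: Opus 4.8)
The plan is to feed the hypothesis into Lemma~\ref{rtslm} and then read off the bound from the sizes of Galois orbits; since the claimed bound involves only $|b|$ and not $a$, the rational part of $a+b\sqrt c$ will play no role, so every orbit with rational sum can simply be discarded.

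First, assume $b\neq 0$ (the statement is trivial otherwise) and write $m = 2^{\epsilon+1}c$ for the modulus supplied by Lemma~\ref{rtslm}, so that $\sqrt c\in\qq(\zeta_m)$. Given $N$ roots of unity summing to $a+b\sqrt c$, Lemma~\ref{rtslm} produces a positive integer $M$ and a multiset $\mathcal S$ of at most $MN$ $m$th roots of unity, with multiplicities invariant under $H:=\gal(\qq(\zeta_m)/\qq(\sqrt c))$, such that $\sum_{\zeta\in\mathcal S}\zeta = M(a+b\sqrt c)$. I would then split $\mathcal S$ into $H$-orbits, letting orbit $O$ occur with multiplicity $n_O\ge 0$; each orbit sum $\sigma_O=\sum_{\zeta\in O}\zeta$ lies in $\qq(\sqrt c)$, say $\sigma_O=u_O+v_O\sqrt c$, and comparing $\sqrt c$-coefficients gives $\sum_O n_O v_O = Mb$.

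The key step is to establish: \emph{whenever $v_O\neq 0$ one has $|O|=|v_O|\,\phi(2c)$.} Let $Y\mid m$ be the common order of the elements of $O$. If $\sqrt c\notin\qq(\zeta_Y)$ then the restriction $H\to\gal(\qq(\zeta_Y)/\qq)$ is surjective, so $O$ is the full set of primitive $Y$th roots of unity and $\sigma_O=\mu(Y)\in\qq$, contradicting $v_O\neq 0$; hence $\sqrt c\in\qq(\zeta_Y)$, i.e.\ the conductor $f$ of $\qq(\sqrt c)$ divides $Y$. In that case the image of $H$ in $\gal(\qq(\zeta_Y)/\qq)$ is the index-two subgroup fixing $\sqrt c$, so the primitive $Y$th roots split into two $H$-orbits of size $\phi(Y)/2$, whence $|O|=\phi(Y)/2$. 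If $\bar\sigma_O$ denotes the sum over the other orbit, then $\sigma_O-\bar\sigma_O=2v_O\sqrt c$ is the Gauss sum $\sum_{j\in(\zz/Y)^\times}\chi(j)\zeta_Y^j$, where $\chi$ is the quadratic character mod $Y$ cutting out $\qq(\sqrt c)$; since $\chi$ is induced from the primitive quadratic character of conductor $f$ and $Y/f$ is coprime to $f$ in every case that occurs, this Gauss sum has absolute value $\sqrt f$, giving $|v_O|=\tfrac12\sqrt{f/c}$. Using $f=c$ for $c\equiv1\pmod 4$ and $f=4c$ for $c\equiv 2,3\pmod 4$, together with the list of admissible orders $Y$ (namely $Y\in\{c,2c\}$ in the first case and $Y=m=4c$ otherwise) and the corresponding values of $\phi(Y)$, one checks in each residue class that $\phi(Y)/2=|v_O|\,\phi(2c)$, which is the claim.

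Finally I would combine these: $|\mathcal S|\ge\sum_{O:\,v_O\neq0}n_O|O| = \phi(2c)\sum_{O:\,v_O\neq0}n_O|v_O| \ge \phi(2c)\bigl|\sum_O n_O v_O\bigr| = \phi(2c)\,|Mb| = M|b|\,\phi(2c)$, and since $|\mathcal S|\le MN$ this yields $N\ge|b|\,\phi(2c)$. The hard part is the bookkeeping inside the key step: one must correctly identify the conductor $f$ of $\qq(\sqrt c)$ and the admissible orders $Y\mid m$ with $\sqrt c\in\qq(\zeta_Y)$ in each of the three residue classes of $c$ modulo $4$, and invoke the correct absolute value $\sqrt f$ of the (generally imprimitive) quadratic Gauss sum; once those are pinned down, everything else is formal.
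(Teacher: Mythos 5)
Your proposal is correct and follows essentially the same route as the paper: reduce via Lemma~\ref{rtslm} to a Galois-invariant multiset of $m$th roots of unity, split into $H$-orbits, compute orbit sums via Gauss sums, and count. The only cosmetic difference is that you package the orbit computation as the single uniform identity $|O|=|v_O|\phi(2c)$ using the imprimitive Gauss-sum formula, whereas the paper tabulates the orbit sums explicitly in each residue class of $c$ mod $4$ and concludes with the (equivalent) bound $|v_O|/|O|\le 1/\phi(2c)$.
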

\begin{proof}
Assume to the contrary that we could write $a + b\sqrt{c}$ with less than $|b|\phi(2c)$ roots of unity in some cyclotomic field $\qq(\zeta_x)$. Let $n = 2c$ if $c \equiv 1 \mod 4$ and $n = 4c$ if $c \equiv 2, 3 \mod 4$. Then by Lemma~\ref{rtslm}, for some $M$, we could write $\sum{\theta_i'} = M(a + b\sqrt{c})$
with less than $M|b|\phi(c)$ roots of unity, where the $\theta_i'$ are all $n^{\text{th}}$ roots of unity whose multiplicities are $\gal(\qq(\zeta_x)/\qq(\sqrt{c}))$-invariant. Consider some orbit of primitive $Y^{\text{th}}$ roots of unity.

{\textbf{\boldmath{Case 1: $\sqrt{c} \notin \qq(\zeta_Y)$}}} The orbit consists of all primitive $Y^{\text{th}}$ roots of unity so the orbit sums to $0$ or $\pm 1$, and the coefficient of $\sqrt{c}$ in the sum of the orbit is $0$.

{\textbf{\boldmath{Case 2: $\sqrt{c} \in \qq(\zeta_Y)$ and $c \equiv 2, 3$ mod $4$}}} By Lemma~\ref{rtslm}, the only case that remains is $Y = 4c$.
Here there are two Galois orbits $O_1$ and $O_2$ of $4c^{\text{th}}$ roots of unity. We compute
\vspace{-12pt}
\begin{singlespace}
\[\sum_{\zeta \in O_1}{\zeta} + \sum_{\zeta \in O_2}{\zeta} = 0,\]
\end{singlespace}
\noindent
and when we take the difference, it is the product of Gauss sums (see \cite{Sam} Section 5.5, Equation 6): 
\vspace{-12pt}
\begin{singlespace}
\[\sum_{\zeta \in O_1}{\zeta} - \sum_{\zeta \in O_2}{\zeta} = 2\sqrt{c}\]
\end{singlespace}
\noindent
Hence, 
\vspace{-12pt}
\begin{singlespace}
\[\sum_{\zeta \in O_1}{\zeta} = \sqrt{c} \quad \text{and} \quad \sum_{\zeta \in O_2}{\zeta} = -\sqrt{c},\]
\end{singlespace}
\noindent and 
\vspace{-12pt}
\begin{singlespace}
\[|O_1| = |O_2| = \frac{1}{2}\phi(4c) = \phi(2c).\]
\end{singlespace}
\noindent

{\textbf{\boldmath{Case 3: $\sqrt{c} \in \qq(\zeta_Y)$ and $c \equiv 1$ mod $4$}}}
By Lemma~\ref{rtslm}, it remains to consider when $Y = c, 2c$. We make a similar calculation as above, taking the top sign when $c \equiv 5$ mod $8$ and the lower sign when $c \equiv 1$ mod $8$, to produce the table:

\begin{center}
\begin{tabular}{c|c|c|c} 
orbit & size & sum of orbit \\ 
\hline
$Y = c$ & $\frac{1}{2}\phi(2c)$ & $\frac{1}{2}\left((-1)^t + \sqrt{c}\right)$ \\ 
\hline
$Y = c$ & $\frac{1}{2}\phi(2c)$ & $\frac{1}{2}\left((-1)^t - \sqrt{c}\right)$ \\
\hline
$Y = 2c$ & $\frac{1}{2}\phi(2c)$ & $\frac{1}{2}\left((-1)^{t + 1} \pm \sqrt{c}\right)$ \\
\hline
$Y = 2c$ & $\frac{1}{2}\phi(2c)$ & $\frac{1}{2}\left((-1)^{t + 1} \mp \sqrt{c}\right)$ \\
\end{tabular}
\end{center}

In all cases, the coefficient of $\sqrt{c}$ over the size of the orbit has absolute value less than or equal to $1/\phi(2c)$. Hence it requires at leats $M|b|\phi(2c)$ roots of unity to write $M(a + b\sqrt{c})$ as the sum of roots of unity whose multiplicities are $\gal(\qq(\zeta_x)/\qq(\sqrt{c}))$-invariant.
\end{proof}

\begin{rem} In some cases the above bound is sharp. For example, when $c \equiv 1$ mod $4$ and $|a| \leq |b|$.
\end{rem}

In the case when $c = 2$ we need an even better bound on the roots of unity needed to represent $a + b\sqrt{c}$.

\begin{prop}[Proposition \ref{sqrt2}]
For integers $a, b$, it takes at least $|a| + 2|b|$ roots of unity to write $a + b\sqrt{2}$ as a sum of roots of unity.
\end{prop}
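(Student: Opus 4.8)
The plan is to reduce the general claim to the case analysis already available for $\sqrt{2}$, namely that it takes at least $2$ roots of unity to write $\sqrt 2$ (from Theorem~\ref{a+bsqrtc} with $c=2$, since $\phi(4)=2$), and then to bootstrap this into a sharper bound that also counts the contribution of the rational part $a$. The key structural fact, established in Lemma~\ref{rtslm}, is that after passing to a suitable Galois average we may assume all the roots of unity are $8$th roots of unity (here $c=2$, so $2^{\epsilon+1}c = 8$): any collection of roots of unity in some $\qq(\zeta_x)$ summing to $a+b\sqrt2$ gives rise, for some $M$, to a collection of at most $M$ times as many $8$th roots of unity summing to $M(a+b\sqrt2)$, with $\gal(\qq(\zeta_x)/\qq(\sqrt 2))$-invariant multiplicities. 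So it suffices to prove: any multiset of $8$th roots of unity summing to $a+b\sqrt2$ has size at least $|a|+2|b|$ (the factor $M$ then divides out).

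The hands-on step is then an explicit analysis in $\qq(\zeta_8)$. Write a generic sum of $8$th roots of unity as $n_1\cdot 1 + n_{-1}\cdot(-1) + n_i\cdot i + n_{-i}\cdot(-i) + m_1\zeta_8 + m_3\zeta_8^3 + m_5\zeta_8^5 + m_7\zeta_8^7$ with nonnegative integer coefficients, where $\zeta_8 = \tfrac{1+i}{\sqrt 2}$, so $\zeta_8 = \tfrac{1}{\sqrt2} + \tfrac{i}{\sqrt2}$, $\zeta_8^3 = -\tfrac{1}{\sqrt2}+\tfrac{i}{\sqrt2}$, $\zeta_8^5=-\tfrac1{\sqrt2}-\tfrac i{\sqrt2}$, $\zeta_8^7=\tfrac1{\sqrt2}-\tfrac i{\sqrt2}$. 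For the sum to be real (equal to $a+b\sqrt2\in\rr$) the imaginary part must vanish, which forces $n_i=n_{-i}$ and $m_1+m_3 = m_5+m_7$. Then the real part is $(n_1-n_{-1}) + \tfrac{1}{\sqrt2}\big((m_1+m_7)-(m_3+m_5)\big)$, i.e.\ $a = n_1-n_{-1}$ and $b = \tfrac12\big((m_1+m_7)-(m_3+m_5)\big)$ after rationalizing $\tfrac{1}{\sqrt2} = \tfrac{\sqrt2}{2}$. The total count is $N = n_1+n_{-1}+2n_i + m_1+m_3+m_5+m_7$. From $|a| = |n_1-n_{-1}| \le n_1+n_{-1}$ we get $n_1+n_{-1}\ge |a|$. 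For the $\zeta_8$-part, $2|b| = |(m_1+m_7)-(m_3+m_5)| \le m_1+m_3+m_5+m_7$, since all four are nonnegative; hence $m_1+m_3+m_5+m_7 \ge 2|b|$. Adding, $N \ge n_1+n_{-1} + (m_1+m_3+m_5+m_7) \ge |a| + 2|b|$, as claimed, and the $2n_i$ term only helps.

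Finally I would note that the Galois-averaging reduction is legitimate precisely because of Lemma~\ref{rtslm}: if $N_0$ roots of unity in some $\qq(\zeta_x)$ write $a+b\sqrt2$, then at most $MN_0$ eighth roots of unity write $M(a+b\sqrt2) = Ma + Mb\sqrt2$, so by the $\qq(\zeta_8)$ computation $MN_0 \ge |Ma| + 2|Mb| = M(|a|+2|b|)$, whence $N_0 \ge |a|+2|b|$. I expect the only subtle point to be bookkeeping in the $\qq(\zeta_8)$ calculation — making sure the real/imaginary decomposition and the nonnegativity of the multiplicities are used correctly — but there is no genuine obstacle; the estimate is a triangle inequality once the coordinates are set up. (One could alternatively phrase the second step without coordinates, splitting the roots of unity by which coset of $\{\pm1,\pm i\}$ in $\mu_8$ they lie in and observing that the $\sqrt2$-coefficient only comes from the nontrivial coset while the rational part only comes from $\{\pm1\}$, but the explicit computation is cleanest.)
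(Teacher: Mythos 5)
Your proposal is correct and follows the same overall strategy as the paper: first invoke Lemma~\ref{rtslm} to reduce to a multiset of $8$th roots of unity summing to $M(a+b\sqrt2)$, then finish with a finite computation in $\qq(\zeta_8)$. The only difference is in how that finite step is carried out: the paper tabulates the sums over $\gal(\qq(\zeta_8)/\qq(\sqrt2))$-orbits and applies the normalized linear functional $f(x+y\sqrt2) = x+2y$, using the Galois-invariance of the multiplicities, whereas you parametrize the $8$th roots directly by their nonnegative multiplicities and apply the triangle inequality to the real and imaginary parts — which, as you implicitly observe, makes the Galois-invariance clause of the lemma unnecessary for this particular proposition (only nonnegativity and the reduction to $\mu_8$ are used). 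Both arguments are correct; yours is marginally more elementary in the verification step and handles signs uniformly via absolute values, while the paper's orbit/functional formulation matches the template it reuses for the harder Proposition~\ref{rootsprop}.
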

\begin{proof}
Assume to the contrary that we could write $a + b\sqrt{2}$ with less than $|a| + 2|b|$ roots of unity. 
Then, Lemma~\ref{rtslm} says that there exists some $M$ such that we can write $M(a + b\sqrt{2})$ as the sum of less than $M(a + 2b)$ roots of unity which are $8^{\text{th}}$ roots of unity whose multiplicities are invariant under the action of Gal$\left(\qq(\zeta_{8})/\qq(\sqrt{2})\right)$. For every $Y$ with $Y \mid 8$ there is either one orbit which contains all primitive $Y^{\text{th}}$ roots of unity or two orbits ($*$ and $\dagger$) which each contain half of the primitive $Y^{\text{th}}$ roots of unity. We write down the following table:
\begin{center}
\begin{tabular}{c|c|c} 
orbit & size & sum of orbit \\ 
\hline
$Y = 1 $ & $1$ & $1$\\ 
\hline
$Y = 2$ & $1$ & $-1$\\
\hline
$Y = 4$ & $2$ & $0$ \\
\hline
$Y = 8^*$ & $2$ & $\sqrt{2}$ \\
\hline
$Y = 8^{\dagger}$ & $2$ & $-\sqrt{2}$
\end{tabular}
\end{center}
Define the linear function $f(x + y\sqrt{2}) = x + 2y$. Note that for all orbits, we have
\vspace{-12pt}
\begin{singlespace}
\[-1 \leq \frac{1}{|O_i|}f\left(\sum_{\zeta \in O_i}{\zeta}\right)\]
\end{singlespace}
\noindent
Assume there were orbits $O_1, \ldots, O_n$ such that 
\vspace{-12pt}
\begin{singlespace}
\[\sum_i{\sum_{\zeta \in O_i}{\zeta}} = M(-a - b\sqrt{2}).\]
\end{singlespace}
\noindent
Then
\vspace{-12pt}
\begin{singlespace}
\begin{align*}
\sum_i{|O_i|} \cdot (-1) &\leq \sum_i{|O_i| \cdot \frac{1}{|O_i|}f\left(\sum_{\zeta \in O_i}{\zeta}\right)} = \sum_i{f\left(\sum_{\zeta \in O_i}{\zeta}\right)} = f\left(\sum_i{\sum_{\zeta \in O_i}{\zeta}}\right) \\
&= f\left(M(-a - b\sqrt{2})\right) = -Mf(a + b\sqrt{c}) = -M(a + 2b) \\
\Rightarrow \sum_i{|O_i|} &\geq M(a + 2b).
\end{align*}
\end{singlespace}
\noindent
Hence, the number of roots of unity needed to write $M(a + b\sqrt{2})$ is greater than $M(a + 2b)$. This is a contradiction, so it requires at least $|a| + 2|b|$ roots of unity to write $a + b\sqrt{2}$.
\end{proof}

When studying $K_1(e)$, we had a special case where the sum of the squares of the roots of unity is another given quadratic irrationality in the same field. The following lemma is analgous to lemma \ref{rtslm}, but keeps track of sums of squares of roots of unity as well.

\begin{lm} \label{newrtlm}
Let $c = 2^a\prod{p_i}$ be the product of distinct primes. Define $\epsilon = 0$ if $c = 1$ (mod $4$) and $\epsilon = 1$ if $c = 2, 3$ (mod $4$). 
Let $L = \text{LCM}(2^{\epsilon + 2}c, 3)$. Let $\alpha$ and $\beta$ be algebraic integers in $\zz(\sqrt{c})$. Assume there exist $N$ roots of unity $\theta_i$ in some cyclotomic field $\qq(\zeta_x)$ such that
\begin{equation}
\sum{\theta_i} = \alpha \quad \text{and} \quad
\sum{\theta_i^2} = \beta \label{theta-alpha-beta}.
\end{equation}
Then, for some positive integer $M$, there exists a collection of less than or equal to $MN$ roots of unity $\theta_i'$ with 
\begin{equation}
\sum{\theta_i'} = M\alpha \quad \text{and} \quad
\sum{(\theta_i')^2} = M\beta \label{a-b-prime},
\end{equation}
such that the multiplicity of $\theta_i'$ is $\gal(\qq(\zeta_x)/\qq(\sqrt{c}))$-invariant, and the $\theta_i'$ are all $L^{\text{th}}$ roots of unity.
\end{lm}
\begin{proof} Define $n = 2^{2\epsilon}c$. We argue exactly as in Lemma~\ref{rtslm}, but with the following modifications to keep track of the sums of the squares in each orbit:

{\textbf{\boldmath{Case 1: $n \nmid Y$}}}
One of the following four things happens:
\begin{enumerate}
\item $Y$ is odd, so $\sum_{\zeta \in O}{\zeta} = \sum_{\zeta \in O}{\zeta^2} = \pm 1$ or $0$. 
%they're equal becuase multiplying the exponent by 2 is a Galois automorphism
If we have $+$ we replace $O$ by the orbit containing only $1$; if we have $-$ we replace $O$ by the orbit containing $3^{\text{rd}}$ roots of unity; and if we have $0$, we drop the orbit from our sums.
\item $2 \mid\mid Y$ and $\sum_{\zeta \in O}{\zeta} = \pm 1$ and $\sum_{\zeta \in O}{\zeta^2} = \mp 1$.
%the squares of them are the primitive Y/2^th roots of unity, which are the negatives of the Yth roots of unity
If we have the top sign we replace $O$ by the orbit of primitive $6^{\text{th}}$ roots of unity; if we have the bottom sign we replace $O$ by the orbit containing $-1$.
\item $4 \mid\mid Y$ and $\sum_{\zeta \in O}{\zeta} = 0$ and $\sum_{\zeta \in O}{\zeta^2} = \pm 2$.
%now the squares are the primitive Y/2^th roots of unity and each appears twice
If we have the top sign we replace $O$ by the orbit of primitive $12^{\text{th}}$ roots of unity; if we have the bottom sign we replace $O$ by the orbit of primitive $4^{\text{th}}$ roots of unity.
\item $8 \mid Y$ and $\sum_{\zeta \in O}{\zeta} = \sum_{\zeta \in O}{\zeta^2} = 0$. We drop this orbit from our sums.
\end{enumerate}
All of these replacement orbits have size less than or equal to $|O|$ and all contain $L^{\text{th}}$ roots of unity.

{\textbf{\boldmath{Case 2: $n \mid Y$ and $b > 2 + \epsilon$ or $r_l > 1$ for some $l$.}}} Define $k = 4$ if $b > 2 + \epsilon$, and $k = p_l$ otherwise. We argue as before, and note that
\vspace{-12pt}
\begin{singlespace}
\[\sum_{\zeta \in O}{\zeta^2} = \sum_{\zeta \in O}{\zeta_k^2 \cdot \zeta^2} = \zeta_k^2 \sum_{\zeta \in O}{\zeta^2.}\]
\end{singlespace}
\noindent
Since $k > 2$ this implies $\sum_{\zeta \in O} \zeta = \sum_{\zeta \in O}{\zeta^2} = 0$. Thus we can drop $O$ from our sums.

{\textbf{\boldmath{Case 3: $n \mid Y$ and $b \leq 2 + \epsilon$ and $r_i \leq 1$ for all $i$.}}}
Note that since $u$ is odd, the squares of the primitive $u^{\text{th}}$ roots of unity are the primitive $u^{\text{th}}$ roots of unity, so
\vspace{-12pt}
\begin{singlespace}
\[\sum_{\zeta \in O}{\zeta} = \sum_{\zeta_v \in O'}{\zeta_v} \ \cdot \sum_{i, (i, u) = 1}{\zeta_u} \quad \text{and} \quad \sum_{\zeta \in O}{\zeta^2} = \sum_{\zeta_v \in O'}{\zeta_v^2} \ \cdot \sum_{i, (i, u) = 1}{\zeta_u}.\]
\end{singlespace}
\noindent
If $u$ is not squarefree then $\sum_{\zeta \in O}{\zeta} = \sum_{\zeta \in O}{\zeta^2} = 0$ so we can remove $O$ from our sum. If $u$ is a squarefree product of an even number of primes, $\sum_{\zeta \in O}{\zeta} = \sum_{\zeta_v \in O'}{\zeta_v}$ and $\sum_{\zeta \in O}{\zeta^2} = \sum_{\zeta_v \in O'}{\zeta_v^2}$, so we can replace $O$ by $O'$ (which is an orbit containing $L^{\text{th}}$ roots of unity because $v \mid n$ implies $v \mid L$). 
Lastly, if $u$ is a squarefree product of an odd number primes, $\sum_{\zeta \in O} \zeta = (\omega + \omega^2) \sum_{\zeta_v \in O'} \zeta_v$, and $\sum_{\zeta \in O} \zeta^2 = (\omega^2 + \omega^4) \sum_{\zeta_v \in O'} \zeta_v^2$ where $\omega$ is a primitive cube root of unity. Replacing $O$ by $\omega O' + \omega^2 O'$ leaves us with a Galois-invariant collection of $L^{\text{th}}$ roots of unity.
\end{proof}

\begin{defi} \label{defP}
For nonnegative integers $a, b, c, d$, with $c = \prod_{i=1}^t p_i$ squarefree, we define
\vspace{-12pt}
\begin{singlespace}
\[P(a, b, c, d) = \begin{cases} b\phi(c) + d\phi(c) + b + 2a &\text{if $c \equiv 3 \mod 4$ and $t$ is odd} \\ b\phi(c) - 2b + 2a &\text{otherwise.}\end{cases} \]
\end{singlespace}
\noindent
\end{defi}

\begin{prop}[Proposition \ref{rootsprop}]
Let $c = \prod_{i = 1}^t{p_i}$ be an odd integer divisible by $3$ where the $p_i$ are distinct primes, and let $a, b, d \in \zz_{\geq 0}$. If there are $N$ roots of unity $\theta_i$ such that $\sum{\theta_i} = -a - b\sqrt{c}$ and $\sum{\theta_i^2} = -a - d\sqrt{c}$, then $N \geq P(a, b, c, d)$.
\end{prop}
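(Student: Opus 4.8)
The strategy is to apply Lemma~\ref{newrtlm} to pass to a bounded, Galois-closed multiset of roots of unity, and then run an orbit-by-orbit estimate of the type used in the proof of Proposition~\ref{sqrt2}, but now tracking both $\sum\theta_i$ and $\sum\theta_i^2$ simultaneously.

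\emph{Reduction.} Since $c$ is odd and $3\mid c$, Lemma~\ref{newrtlm} applies with $\epsilon=0$ if $c\equiv 1\bmod 4$, $\epsilon=1$ if $c\equiv 3\bmod 4$, and $L=\text{LCM}(2^{\epsilon+2}c,3)=2^{\epsilon+2}c$. It produces, for some positive integer $M$, a multiset of at most $MN$ roots of unity, each an $L^{\text{th}}$ root of unity, whose multiplicities are $\gal(\qq(\zeta_L)/\qq(\sqrt c))$-invariant, with sum $M(-a-b\sqrt c)$ and sum of squares $M(-a-d\sqrt c)$. Since $P$ is $\qq$-linear in $(a,b,d)$, it suffices to prove $N\ge P(a,b,c,d)$ under the extra hypotheses that every $\theta_i$ is an $L^{\text{th}}$ root of unity and that the multiset $\{\theta_i\}$ is a disjoint union of $\gal(\qq(\zeta_L)/\qq(\sqrt c))$-orbits.

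\emph{Orbit data.} For each Galois orbit $O$ of $L^{\text{th}}$ roots of unity over $\qq(\sqrt c)$ I would tabulate the triple $\bigl(|O|,\ \sum_{\zeta\in O}\zeta,\ \sum_{\zeta\in O}\zeta^2\bigr)$. If $\sqrt c\notin\qq(\zeta_Y)$, where $O$ consists of primitive $Y^{\text{th}}$ roots, then $O$ is the full set of primitive $Y^{\text{th}}$ roots and both sums are rational of absolute value $\le 2$, computed by the Möbius argument of Case~1 of Lemma~\ref{newrtlm}. The orbits with $\sqrt c\in\qq(\zeta_Y)$ have $Y$ a multiple of $c$ (if $c\equiv 1\bmod 4$), or of $4c$ (if $c\equiv 3\bmod 4$), dividing $L$, and there are exactly two such orbits for each admissible $Y$; their sums and square-sums are evaluated through the quadratic Gauss sum $\sum_k\chi(k)\zeta_c^k$ (with $\chi$ the quadratic character mod $c$), exactly as in the proof of Theorem~\ref{a+bsqrtc}, the square-sums reducing to the same Gauss sum after $\zeta\mapsto\zeta^2$, which permutes residue classes according to the value of $\chi(2)$. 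This gives a short explicit table depending on $c\bmod 8$ and on the parity $t$ of the number of prime factors of $c$.

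\emph{The functional estimate.} With the table in hand I would exhibit a single $\qq$-linear functional $\Phi$ on $\qq(\sqrt c)\op\qq(\sqrt c)$, depending only on $\phi(c)$ and on the case ($c\bmod 4$ and the parity of $t$), satisfying: (i) $\Phi\bigl(\sum_{\zeta\in O}\zeta,\ \sum_{\zeta\in O}\zeta^2\bigr)\le |O|$ for every orbit $O$ in the table; and (ii) $\Phi(-a-b\sqrt c,\ -a-d\sqrt c)=P(a,b,c,d)$ for all $a,b,d\ge 0$. Granting these, summing (i) over the orbits occurring in $\{\theta_i\}$ and using linearity and (ii) gives
\[
P(a,b,c,d)=\Phi\Bigl(\sum_i\theta_i,\ \sum_i\theta_i^2\Bigr)=\sum_O (\mathrm{mult}_O)\,\Phi\bigl(\textstyle\sum_{\zeta\in O}\zeta,\ \sum_{\zeta\in O}\zeta^2\bigr)\ \le\ \sum_O(\mathrm{mult}_O)\,|O|=N,
\]
which is the claim. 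The two cases in the definition of $P$ are forced by which orbit sums are extremal for $\Phi$: when $c\equiv 3\bmod 4$ and $t$ is odd, the Gauss-sum orbits whose order is a multiple of $4c$ lie on the boundary $\Phi(v_O)=|O|$ and also contribute to the square-sum slot, producing the extra $d\phi(c)+b$ in $P$; otherwise only the slot carrying $\sum\theta_i$ matters and one recovers $b\phi(c)-2b+2a$.

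The genuinely laborious part, and the main obstacle, is the combination of the last two steps: one must correctly evaluate all the Gauss-sum orbit data — especially the square-sums of orbits whose order is a multiple of $c$, which split according to $c\equiv 1,7\bmod 8$ versus $c\equiv 3,5\bmod 8$ — and then check that the proposed $\Phi$ meets every one of the finitely many orbit inequalities with equality exactly on the extremal orbits. There is essentially a one-parameter family of $\Phi$ satisfying (ii), and exhibiting a member of it that also satisfies (i) is where the precise constants $+2a$, $-2b$, $+b$, and $+d\phi(c)$ in $P(a,b,c,d)$ are pinned down; this is the crux of the argument.
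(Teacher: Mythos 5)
Your proposal takes exactly the same approach as the paper: reduce via Lemma~\ref{newrtlm} to a Galois-closed multiset of $L^{\text{th}}$ roots of unity, tabulate the orbit data $(|O|,\sum_{\zeta\in O}\zeta,\sum_{\zeta\in O}\zeta^2)$ via Gauss sums in a case analysis on $c\bmod 4$ (and $c\bmod 8$, and the parity of $t$), and derive the bound from a $\qq$-linear functional satisfying $\Phi(v_O)\le |O|$ on every orbit together with $\Phi(-a-b\sqrt c,-a-d\sqrt c)=P(a,b,c,d)$. The paper's $\Phi$ is $-f$ with $f(x+y\sqrt c,z+w\sqrt c)=y\phi(c)+w\phi(c)+x+z+y$ when $c\equiv 3\bmod 4$ and $t$ is odd, and $f(x+y\sqrt c,z+w\sqrt c)=y\phi(c)+x+z-2y$ otherwise, which are particular members of the one-parameter family you describe and which are checked against the explicit orbit tables you correctly flag as the crux.
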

\begin{proof}
Assume to the contrary that we could write $\sum{\theta_i} = -a - b\sqrt{c}$ and $\sum{\theta_i^2} = -a - d\sqrt{c}$ with less than $P(a, b, c, d)$ roots of unity. Let $n = 2c$ if $c \equiv 1 \mod 4$ and $n = 4c$ if $c \equiv 3 \mod 4$. Then by Lemma~\ref{newrtlm}, for some $M$, we could write
\begin{equation}
\sum{\theta_i'} = M(-a - b\sqrt{c}) \quad \text{and} \quad \sum{\theta_i'^2} = M(-a - d\sqrt{c}) \label{ding}
\end{equation}
with less than $M \cdot P(a, b, c, d)$ roots of unity, where the $\theta_i'$ are $2n^{\text{th}}$ roots of unity whose multiplicities are Galois-invariant. To show we need at least $M \cdot P(a, b, c, d)$ roots of unity $\theta_i$ to write \eqref{ding}, take some orbit with primitive $Y^{\text{th}}$ roots unity. First we consider the case when $\sqrt{c} \notin \qq(\zeta_Y)$, so the orbit consists of all primitive $Y^{\text{th}}$ roots of unity. Write $Y = 2^b \prod_{i = 1}^{k}{p_i^{r_i}}$. We compute:
\begin{center}
\begin{tabular}{c|c|c|c} 
type of orbit & size & sum of orbit & sum of squares in orbit \\ 
\hline
$b \geq 3$, or $r_i > 1$ for some $i$ & $\phi(Y) \geq 0$ & $0$ & $0$ \\ 
\hline
$b = 0$, $k$ even, $r_i = 1 \ \forall i$ & $\phi(Y) \geq 1$ & $1$ & $1$ \\
\hline
$b = 0$, $k$ odd, $r_i = 1 \ \forall i$ & $\phi(Y) \geq 2$ & $-1$ & $-1$ \\
\hline
$b = 1$, $k$ even, $r_i = 1 \ \forall i$ & $\phi(Y) \geq 1$ & $-1$ & $1$ \\
\hline
$b = 1$, $k$ odd, $r_i = 1 \ \forall i$ & $\phi(Y) \geq 2$ & $1$ & $-1$ \\
\hline
$b = 2$, $k$ even, $r_i = 1 \ \forall i$ & $\phi(Y) \geq 1$ & $0$ & $-2$ \\
\hline
$b = 2$, $k$ odd, $r_i = 1 \ \forall i$ & $\phi(Y) \geq 4$ & $0$ & $2$ \\
\end{tabular}
\end{center}

To find the minimum number of roots of unity $\theta_i$ used to write \eqref{ding}, if we can write one row as the sum
of other rows in a way that requires less roots of unity, we can assume we do not use that row. Deleting rows in this way, we arrive at the following:

\begin{center}
\begin{tabular}{c|c|c|c} 
orbit & size & sum of orbit & sum of squares in orbit \\ 
\hline
$Y = 1$ & $1$ & $1$ & $1$ \\ 
\hline
$Y = 2$ & $1$ & $-1$ & $1$ \\
\hline
$Y = 3$ & $2$ & $-1$ & $-1$ \\
\hline
$Y = 6$ & $2$ & $1$ & $-1$ \\
\hline
$Y = 4$ & $2$ & $0$ & $-2$ \\
\end{tabular}
\end{center}

Assume $c \equiv 3$ mod $4$. The cases that remain are $Y = 4c, 8c$. Here there are two Galois orbits $O_1$ and $O_2$ of $Y^{\text{th}}$ roots of unity. We compute $\sum_{\zeta \in O_1}{\zeta} + \sum_{\zeta \in O_2}{\zeta} = 0$
and 
\vspace{-12pt}
\begin{singlespace}
\[\sum_{\zeta \in O_1}{\zeta^2} + \sum_{\zeta \in O_2}{\zeta^2} = \begin{cases} 0 & \text{if $Y = 8c$;} \\ 2 \cdot (-1)^{t + 1} & \text{if $Y = 4c$.}\end{cases}\]
\end{singlespace}
\noindent
When we take the differences, this is the product of Gauss sums: 
\vspace{-12pt}
\begin{singlespace}
\[\sum_{\zeta \in O_1}{\zeta} - \sum_{\zeta \in O_2}{\zeta} = \begin{cases}0 & \text{if $Y = 8c$} \\ 2\sqrt{c} & \text{if $Y = 4c$}\end{cases} \qquad \text{and} \qquad \sum_{\zeta \in O_1}{\zeta^2} - \sum_{\zeta \in O_2}{\zeta^2} = \begin{cases}\pm 4\sqrt{c} & \text{if $Y = 8c$} \\ 0 & \text{if $Y = 4c$}\end{cases}\]
\end{singlespace}
\noindent
Solving these systems of equations, we have:

\begin{center}
\begin{tabular}{c|c|c|c} 
orbit & size & sum of orbit & sum of squares in orbit \\ 
\hline
$Y = 4c$ & $\phi(c)$ & $\sqrt{c}$ & $ (-1)^{t + 1}$ \\ 
\hline
$Y = 4c$ & $\phi(c)$ & $-\sqrt{c}$ & $ (-1)^{t + 1}$ \\
\hline
$Y = 8c$ & $2\phi(c)$ & $0$ & $2\sqrt{c}$ \\
\hline
$Y = 8c$ & $2\phi(c)$ & $0$ & $-2\sqrt{c}$
\end{tabular}
\end{center}

When $t$ is odd, define the linear function 
$f(x + y\sqrt{c}, z + w\sqrt{c}) = y\phi(c) + w\phi(c) + x + z + y$.
If there were orbits $O_1, \ldots, O_m$ with $\sum_i{\sum_{\zeta \in O_i}{\zeta}} = M(-a - b\sqrt{c})$ and $\sum_i{\sum_{\zeta \in O_i}{\zeta^2}} = M(-a - d\sqrt{c})$,
\vspace{-12pt}
\begin{singlespace}
\[\sum_i{|O_i| \cdot \frac{1}{|O_i|}f(\sum_{\zeta \in O_i}{\zeta}}, \sum_{\zeta \in O_i}{\zeta^2}) = f(\sum_i{\sum_{\zeta \in O_i}{\zeta}}, \sum_i{\sum_{\zeta \in O_i}{\zeta^2}}) = -M(b\phi(c) + d\phi(c) + 2a + b).\]
\end{singlespace}
\noindent
Since $-1 \leq \frac{1}{|O_i|}f(\sum_{\zeta \in O_i}{\zeta}, \sum_{\zeta \in O_i}{\zeta^2})$ for all $i$,
\begin{gather*}
\sum_i{|O_i| \cdot (-1)} \leq \sum_i{|O_i| \cdot \frac{1}{|O_i|}f(\sum_{\zeta \in O_i}{\zeta}}, \sum_{\zeta \in O_i}{\zeta^2}) = -M(b\phi(c) + d\phi(c) + 2a + b) \\
\Rightarrow \quad \sum_i{|O_i|} \geq M(b\phi(c) + d\phi(c) + 2a + b).
\end{gather*}

Next, assume $c \equiv 1$ mod $4$. It remains to consider $Y = c, 2c, 4c$. We make a similar calculation as above, taking the top sign when $c \equiv 5$ mod $8$ and the lower sign when $c \equiv 1$ mod $8$, to produce the table:

\begin{center}
\begin{tabular}{c|c|c|c} 
orbit & size & sum of orbit & sum of squares in orbit \\ 
\hline
$Y = c$ & $\frac{1}{2}\phi(c)$ & $\frac{1}{2}\left((-1)^t + \sqrt{c}\right)$ & $\frac{1}{2}\left((-1)^t \mp \sqrt{c}\right)$ \\ 
\hline
$Y = c$ & $\frac{1}{2}\phi(c)$ & $\frac{1}{2}\left((-1)^t - \sqrt{c}\right)$ & $\frac{1}{2}\left((-1)^t \pm \sqrt{c}\right)$ \\
\hline
$Y = 2c$ & $\frac{1}{2}\phi(c)$ & $\frac{1}{2}\left((-1)^{t + 1} \pm \sqrt{c}\right)$ & $\frac{1}{2}\left((-1)^t + \sqrt{c}\right)$ \\
\hline
$Y = 2c$ & $\frac{1}{2}\phi(c)$ & $\frac{1}{2}\left((-1)^{t + 1} \mp \sqrt{c}\right)$ & $\frac{1}{2}\left((-1)^t - \sqrt{c}\right)$\\
\hline
$Y = 4c$ & $\phi(c)$ & $0$ & $(-1)^{t+1} + \sqrt{c}$ \\
\hline
$Y = 4c$ & $\phi(c)$ & $0$ & $(-1)^{t+1} - \sqrt{c}$
\end{tabular}
\end{center}

Define the function $f(x + y\sqrt{c}, z + w\sqrt{c}) = y\phi(c) + x + z - 2y$. Assume there were orbits $O_1, \ldots, O_n$ such that $\sum_i{\sum_{\zeta \in O_i}{\zeta}} = M(-a - b\sqrt{c})$ and $\sum_i{\sum_{\zeta \in O_i}{\zeta^2}} = M(-a - d\sqrt{c})$. In all cases, we find 
\vspace{-12pt}
\begin{singlespace}
\[-1 \leq \frac{1}{|O|}f\left(\sum_{\zeta \in O}{\zeta}, \sum_{\zeta \in O}{\zeta^2}\right).\]
\end{singlespace}
\noindent
So by the same argument as above, we have $\sum_i{|O_i|} \geq M(b\phi(c) + 2a - 2b)$.
\end{proof}

\vspace{.1in}
\noindent
\textbf{Acknowledgements.}
I would like to thank Victor Ostrik for suggesting this problem, providing me with useful papers to read, and meeting with me weekly to discuss ideas. I would also like to thank my brother, Eric Larson, for teaching me about the theory of cyclotomic fields (such as some of the methods used in section \ref{rootssection}) and proof-reading the paper.

\nocite{*}
\bibliography{FCrefs}{}
\bibliographystyle{plain}

\end{document}